\theoremstyle{definition}  
\newcommand{\norm}[1]{\left\lVert#1\right\rVert}
\def \N{\mathbb N}
\def \R{\mathbb R}
\def \C{\mathbb C}
\def \pa{{\partial}}
\def \O{\mathcal{O}}
\def \E{\mathcal{E}}
\numberwithin{equation}{section}
\theoremstyle{plain}
\newtheorem{thm}{Theorem}[section]
\newtheorem{lem}[thm]{Lemma}
\newtheorem{prop}[thm]{Proposition}
\newtheorem{cor}[thm]{Corollary}
\theoremstyle{definition}
\theoremstyle{remark}
\newtheorem{rk}[thm]{Remark}
\theoremstyle{plain}
\theoremstyle{remark}
\theoremstyle{plain}
\title{On the nonlinear Dysthe equation}
\author[R. Grande]{Ricardo Grande$^1$}
\address{Department of Mathematics, Massachusetts Institute of Technology, Cambridge, MA}
\email{rgi@mit.edu} 
\thanks{$^1$  R.G. was funded in part by  the Simons Foundation.}
\author[K. Kurianski,]{Kristin M. Kurianski$^2$}
\address{Department of Mathematics, Massachusetts Institute of Technology, Cambridge, MA}
\email{kmdett@mit.edu}
\thanks{$^2$  K.M.K.   was funded in part  by the National Science Foundation Graduate Research Fellowship under Grant No. 1122374.}
\author[G. Staffilani]{Gigliola Staffilani$^3$}
\address{Department of Mathematics, Massachusetts Institute of Technology, Cambridge, MA}
\email{gigliola@mit.edu} 
\thanks{$^3$  G.S. was funded in part by NSF  DMS-1764403, and the Simons Foundation. }
\keywords{Dysthe equation, rogue waves, smoothing effect, well-posedness}
\subjclass[2010]{35Q35 (primary), and 35A01, 76B15 (secondary)}
\begin{document}
	
	\begin{abstract}
	This work is dedicated to putting on a solid analytic ground the theory of local well-posedness for the two dimensional
	Dysthe equation. This equation  can be derived from the incompressible Navier-Stokes equation after performing an asymptotic expansion of a wavetrain modulation to the fourth order. Recently, this equation has been used to numerically study rare phenomena on large water bodies such as rogue waves. In order to study well-posedness, we use Strichartz, and improved smoothing and maximal function estimates. We follow ideas from the pioneering work of Kenig, Ponce and Vega, but since the equation is highly anisotropic,   several technical challenges had to be resolved.  We conclude our work by also presenting an ill-posedness result.	
		\end{abstract}

	\maketitle
	
\section{Introduction}
	
	\subsection{Background}
	
	Ocean waves are called rogue or freak waves when their amplitude exceeds twice the characteristic wave height expected for the given surface conditions \cite{roguereview}. Such unexpected extreme events pose a threat of catastrophic impacts for a variety of naval infrastructure and, therefore, are important to understand. Over the past several decades, there have been many efforts to model and predict the behavior of rogue waves (e.g., \cite{CouSaps,Dysthe,roguereview,FarSap,Hasselmann}).
	
	The motivation for this paper came from the work of Farazmand and Sapsis \cite{FarSap} who studied numerical simulations of large wave prediction for two-dimensional water waves. Their work supports the hypothesis that large ocean waves can be caused by nonlinear interactions as a result of focusing, although the precise mechanism for the formation of such extreme events is a subject of much debate (see for instance \cite{SapsisMechanism,OnoratoMechanism2,OnoratoMechanism} and the references therein). 
	By decomposing the surface wavefield into localized Gaussian wave groups and evolving the groups according to the governing envelope equations, Farazmand and Sapsis computed the expected maxima of each group and produced a prediction of the maximal future amplitude generated by given initial data. The governing envelope equation they used is given by the two-dimensional Dysthe equation:
		\begin{equation}\label{eq:Dysthe}
	\left\lbrace\begin{array}{ll}
	\pa_t u + L(u)  = N(u),\quad t\in\R,\ (x,y)\in\R^2\\
	u |_{t=0} =u_0,
	\end{array}\right.
	\end{equation}
	where
	\[ L(u) = -\frac{1}{16} \pa_x^3 u + \frac{i}{8} \pa_x^2 u + \frac{1}{2}\pa_x u - \frac{i}{4}\pa_y^2 u + \frac{3}{8}\pa_x\pa_y^2 u,\]
	and the nonlinearity is given by
	\[ N(u)= -\frac{i}{2} |u|^2\, u - \frac{3}{2} |u|^2 \, \pa_x u - \frac{1}{4} u^2 \,\pa_x \overline{u} + \frac{i}{2}\, u\, \pa_x^2 |\nabla|^{-1} (|u|^2).\]

In the formulae above, $u$ is the complex-valued envelope for the modulation wave with complex conjugate $\overline{u} $. This equation was first proposed by Dysthe in \cite{Dysthe}. It can be derived from the incompressible Navier-Stokes equation, after performing an asymptotic expansion of the modulation of a wavetrain. Truncating this approximation at order three would give rise to the cubic NLS equation, which has previously been used for large wave prediction \cite{OnoratoMechanism2}. However, the NLS equation is only valid to model wave spectra with a narrow bandwidth. Dysthe (1979) found that continuing the expansion to fourth-order relaxes the bandwith restriction and improves stability analysis results \cite{roguereview}. In this paper we study the local well-posedness of this equation, with the long-term goal of justifying analytically the numerical results mentioned above.

We immediately note that  a solution to the initial value problem  \eqref{eq:Dysthe} would conserve the mass (i.e. the $L^2$-norm of the initial data), but to the best of our knowledge the equation does not have a conserved energy\footnote{There is a Hamiltonian version of the 1D Dysthe equation \cite{Sulem}, but the existence of a Hamiltonian version in 2D remains an open question.}. Hence, a long time analysis of the solutions of the equation will be difficult to obtain since one would need to conduct the analysis at the $L^2$ level of regularity\footnote{ In this regard it is important to note that in  \cite{FarSap} the authors argue that  their goal is not longtime prediction. In fact their  numerical experimentations  are valid for short to medium time prediction where  one only expects one rogue wave per ocean patch. For a longer period of time rogue waves on different ocean patches may interact and the analysis  becomes certainly more difficult.}. But even for local solutions one expects complications: there is no scaling symmetry available, and there is a strong  anisotropy, which is a byproduct of a preferred direction of propagation assumed during the derivation of the equation. Since a scaling exponent is not available in this case, it is difficult to conjecture for which Sobolev space of data the initial value problem is well posed and for which it is ill posed. In this work, we exhibit results in both directions, although they are not sharp and a gap remains.

 Let us now start by analyzing in more details the challenges one faces in proving well-posedness. Because of the presence of derivatives in the nonlinearity, Strichartz estimates are not enough to close a contraction mapping argument. The fact that we find linear terms such as $\pa_x^3 u$ in the Dysthe equation  is reminiscent of the KdV equation, so one might expect to be able to recover up to two derivatives in $x$ in the nonlinearity using  smoothing effect estimates\footnote{To be precise, we expect to recover one derivative for the linear term $W(t)u_0$. This gain can sometimes be doubled for the Duhamel term using some techniques introduced by Kenig, Ponce and Vega \cite{KPV}.}, which should be enough to close a contraction mapping argument. However, there is an important difference with respect to the KdV equation: the interaction between terms such as $\pa_x^3 u$ and $\pa_x\pa_y^2 u$ which are of opposite sign. This unfavorable interaction translates into an unfavorable cancelation in the dispersive relation, a situation that is also much worse   than  the one given by the  linear operator of the Zakharov-Kuznetsov equation  that involves  $\pa_x\Delta u$ \cite{LinaresPastor}. All this  is better understood by looking at the symbol associated to the linear operator of the Dysthe equation:
\begin{equation} \label{dispersive}\Phi(\xi,\mu)=\frac{1}{16}\xi^3 - \frac{3}{8} \xi \mu^2 - \frac{1}{8}\xi^2 + \frac{1}{4}\mu^2 +\frac{1}{2}\xi.\end{equation}
which will  play an important role  in the rest of the paper.
Based on the work of Kenig, Ponce and Vega on the KdV equation \cite{KPV}, one expects to derive linear smoothing estimates in a space such as $L^{\infty}_x L^2_{t,y}$. A key ingredient in the proof, however, would be  making sure that $\pa_{\xi} \Phi$ does not vanish away from the origin. Unfortunately, this is not the case for us, which is another important difference with respect to the KdV equation. 

A way to overcome this  issue consists of dividing the frequency space into regions where at least one component of $\nabla\Phi$ is nonzero. There will be two such regions in our analysis, one where $\pa_{\xi} \Phi$ does not vanish and another where  $\pa_{\mu} \Phi$ does not. A third, low-frequency region will be necessary as $\Phi$ has two critical points in the coordinate system $(\xi,\mu)$, namely at $(\frac{2}{3},\pm \frac{\sqrt{10}}{3})$. The drawback of this approach is that the space where we recover derivatives when $\pa_{\xi} \Phi$ is nonzero will be $L^{\infty}_x L^2_{t,y}$, while the space corresponding to the region where $\pa_{\mu} \Phi$ is nonzero will be $L^{\infty}_y L^2_{t,x}$. This will in turn give rise to different spaces where matching maximal function estimates are needed, as well as additional complications when estimating the nonlinearity. 

Similar techniques have been used  to study NLS-type equations \cite{KPV4}, and  the Zakharov-Kuznetsov equation \cite{LinaresPastor,LinaresRamos,MolinetPilod,RibaudVento}, with the notable difference that in both these cases the linear operators $\Delta$  and $\partial_x\Delta$ respectively, have better properties, as they allow for a more isotropic division of the frequency space\footnote{Note that the maximal function estimates in the work of Linares and Pastor \cite{LinaresPastor} for the modified Zakharov-Kuznetsov equation admit the same regularity as ours.}. Moreover,  in both cases one can  exploit  scaling symmetries  that  somewhat simplify the analysis. Perhaps a better comparison can be drawn to the work of Kenig and Ziesler on the local well-posedness theory of the Kadomstev-Petviashvili equation \cite{KZ}. This equation is also anisotropic, and its study requires a combination of Strichartz, smoothing and maximal function estimates.  It is also important to recall here that  {\it lateral spaces} such as $L^{\infty}_x L^2_{t,y}$ and $L^{\infty}_y L^2_{t,x}$ introduced above, were a key ingredient in the  study  of Schr\"odinger maps in  the work of Bejeneru-Ionescu-Kenig-Tataru \cite{BIKT}, and also previously introduced  by Bejenaru \cite{Bejenaru}, Bejenaru-Ionescu-Kenig \cite{BIK} and Ionescu-Kenig \cite{IonescuKenig}.

Local well-posedness in $H^{\frac{3}{2}}(\R^2)$ was obtained for the Dysthe equation by Koch and Saut\footnote{After the publication of this paper on arXiv, J.-C. Saut communicated further progress in this problem in an upcoming work \cite{Sautnew}.} in \cite{KochSaut}. Their results are based on a local smoothing estimate and a careful partition of physical space into small cubes, which yield local well-posedness for a large class of equations, including Dysthe's. Our approach is based on exploiting the lateral spaces introduced above, which allows us to obtain a stronger global smoothing effect for the Dysthe equation. This, together with maximal function estimates and Strichartz estimates, yields a local well-posedness theory in $H^{s}(\R^2)$ for $s>1$, as we state below.

%

\subsection{Statement of results}

The main result in this paper is the following:

\begin{thm}\label{thm:maintheorem}
The Dysthe equation \eqref{eq:Dysthe} is locally well-posed for initial data $u_0\in H^s(\R^2)$, $s>1$.
\end{thm}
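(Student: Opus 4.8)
The plan is to run a contraction mapping argument on the Duhamel formulation
\[
u(t) = W(t)u_0 - \int_0^t W(t-t')N(u)(t')\,dt',
\]
where $W(t) = e^{-tL}$ is the linear propagator with symbol $\Phi$ from \eqref{dispersive}, in a function space $X$ built to capture simultaneously (i) the Sobolev regularity $H^s$, $s>1$, at fixed time, (ii) Strichartz integrability in $(t,x,y)$, (iii) the two lateral smoothing norms $L^\infty_x L^2_{t,y}$ and $L^\infty_y L^2_{t,x}$ (applied after a Littlewood--Paley decomposition and after the frequency-region partition into the $\pa_\xi\Phi\neq 0$ zone, the $\pa_\mu\Phi\neq 0$ zone, and the low-frequency zone around the critical points $(\tfrac23,\pm\tfrac{\sqrt{10}}{3})$), and (iv) the matching maximal-function norms dual to those smoothing gains. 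All four families of linear estimates are to be taken as established earlier in the paper; the work here is to assemble them into a norm $\|\cdot\|_X$ for which $W(t)u_0 \in X$ whenever $u_0\in H^s$, and for which the Duhamel operator gains enough derivatives to absorb the derivatives appearing in $N(u)$.

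The key steps, in order: first, fix the restricted time interval $[0,T]$ and define $X_T$ as the intersection of the weighted spaces above, with the lateral and maximal-function components weighted by appropriate powers of frequency so that the linear estimate $\|W(t)u_0\|_{X_T}\lesssim \|u_0\|_{H^s}$ holds. Second, prove the linear inhomogeneous (Duhamel) estimate: here the smoothing effect must be \emph{doubled} on the Duhamel term via the Kenig--Ponce--Vega trick, so that a term like $\pa_x^3$ in $L$ lets us recover up to two $x$-derivatives on the forcing; the anisotropy forces us to run this argument separately in each frequency region, picking up a gain in $x$ where $\pa_\xi\Phi\neq0$ and a gain in $y$ where $\pa_\mu\Phi\neq0$, and controlling the low-frequency region crudely by Strichartz since no derivatives are lost there. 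Third, perform the nonlinear estimates: bound each of the four terms in $N(u)$ — the gauge term $|u|^2u$, the two terms with one $\pa_x$ (namely $|u|^2\pa_x u$ and $u^2\pa_x\overline u$), and the nonlocal term $u\,\pa_x^2|\nabla|^{-1}(|u|^2)$ — in the dual norm that the Duhamel estimate wants, using Hölder in the lateral variables to pair an $L^\infty_x$ (or $L^\infty_y$) factor against the $L^2_{t,y}$ (or $L^2_{t,x}$) smoothing factor, Sobolev embedding in the remaining two-dimensional slice (this is where $s>1$ enters, to control $L^\infty$ in one variable by $H^s$), and the maximal-function bound for the factors carrying no derivative. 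Fourth, combine to get $\|\Psi(u)\|_{X_T}\le C\|u_0\|_{H^s} + CT^\theta\|u\|_{X_T}^3$ for some $\theta>0$, close the fixed point on a ball for $T$ small, and obtain uniqueness and continuous dependence by the same difference estimates.

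The main obstacle I expect is step three for the most singular term, $u\,\pa_x^2|\nabla|^{-1}(|u|^2)$: it carries a net of one full derivative (two $\pa_x$ mitigated by one negative power of $|\nabla|$, which is anisotropically weaker than $|\pa_x|^{-1}$), it is nonlocal, and it is quadratic in $u$ inside the $|\nabla|^{-1}$, so one cannot simply assign the derivative to a single well-chosen factor. Making this term fit requires carefully distributing the two $x$-derivatives — presumably putting one on the $|\nabla|^{-1}(|u|^2)$ block to be absorbed by the $x$-smoothing gain of the Duhamel estimate in the $\pa_\xi\Phi\neq0$ region, while controlling the $|u|^2$ inside by a product of a maximal-function norm and a Strichartz norm — and treating separately the frequency configuration where the output is at low frequency or where the derivative wants to land in the $y$-region, where only the weaker $y$-smoothing is available. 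The secondary difficulty is bookkeeping: because the two smoothing norms live on transposed lateral spaces, every bilinear/trilinear estimate has to be checked in several cases according to which region each input frequency falls in, and one must verify the frequency-localized constants sum over the Littlewood--Paley pieces, which is where the restriction $s>1$ (rather than $s=1$) is consumed and the gap with the ill-posedness threshold appears.
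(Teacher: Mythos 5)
Your proposal follows essentially the same architecture as the paper: a contraction on the Duhamel formulation in a composite space built from the $L^\infty_T H^s$ norm, near-endpoint Strichartz norms, the two lateral smoothing norms $L^\infty_x L^2_{T,y}$ and $L^\infty_y L^2_{T,x}$ attached to the regions where $\pa_\xi\Phi$ and $\pa_\mu\Phi$ are nonvanishing plus a separate low-frequency treatment, and the matching $L^4_x L^\infty_{T,y}$, $L^4_y L^\infty_{T,x}$ maximal-function norms, with the 2D Sobolev embedding consuming the hypothesis $s>1$. Two points where you deviate from (or overcomplicate) the paper's actual closure are worth flagging. First, you assert that the smoothing effect \emph{must} be doubled on the Duhamel term; in fact, at regularity $s>1$ the paper closes the fixed point using only the single one-derivative gain, estimating the forcing in $L^1_T L^2_{x,y}$ against the $\langle\nabla\rangle^{s+1}$-weighted lateral norms $\eta_1,\eta_2$ — the doubling of Section 3 (which would require measuring the nonlinearity in $L^1_x L^2_{t,y}$-type norms, a genuinely different and harder pairing) is not invoked in the contraction. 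This works because the nonlinearity carries only one net derivative. Second, the term you single out as the main obstacle, $u\,\pa_x^2|\nabla|^{-1}(|u|^2)$, is defused in the paper by a simpler observation than the derivative-distribution scheme you sketch: writing $\pa_x^2|\nabla|^{-1} = P_3\,\pa_x$ with $P_3=\pa_x|\nabla|^{-1}$ an order-zero multiplier bounded on $L^p$ ($1<p<\infty$), commuting the single $\pa_x$ onto one factor of $u\bar u$, and building the $P_3$-composed versions of the lateral smoothing norms directly into $\eta_1,\eta_2$; after this the term is estimated exactly like $|u|^2\pa_x u$. With these adjustments your plan coincides with the paper's proof; without them (in particular if you insisted on the doubled smoothing) you would be committing to nonlinear estimates in $L^1_x L^2_{t,y}$ that your proposal does not supply.
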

\begin{rk}
The maximal function estimates that we derive in \Cref{sec:maxfunc} suggest that it might be possible to close the contraction mapping argument, which is used to prove Theorem \ref{thm:maintheorem},  for $s>\frac{3}{4}$. Unfortunately, putting back together the different regions where one can use the smoothing effect creates technical difficulties that we overcome with the Sobolev embedding theorem, hence losing derivatives and forcing us to require   $s>1$.
\end{rk}

\begin{rk}
The proof of Theorem \ref{thm:maintheorem} is conducted using a fixed point theorem on a Banach space built out of norms that come from the proof of the smoothing effect and the maximal function estimates. There is another type of space that one could consider which is a type of $X^{s,b}$ space that first appeared in the context of dispersive equations in the work of Bourgain \cite{Bourgain} and was then further exploited in the work of Kenig-Ponce-Vega \cite{KPV3}. 
Preliminary calculations show that,  although the setup is technically more challenging due to the anisotropic nature of the dispersive relation given by $\Phi$ in \eqref{dispersive}, one could in principle set up  a fixed point approach using the appropriate definition of an $X^{s,b}$  space in this setting. However, this approach would not give  smoothing effect estimates such as the ones in  Section \ref{smoothingeffect}. We believe that this approach may be very fruitful in the periodic case, which is indeed the setup of the numerical study in \cite{FarSap}. In an upcoming work, we are in fact investigating analytically certain questions strictly related to the periodic setting, both in 1D and 2D, that have emerged from the more experimental work in \cite{VandenEijnden} and \cite{FarSap}.
\end{rk}

Despite the fact that there is no scaling symmetry available for this equation, one can gain some intuition about {\it critical regularity} by considering equation \eqref{eq:Dysthe} where we only keep the top order terms, i.e.
\begin{align*}
 \tilde L(u) & = -\frac{1}{16} \pa_x^3 u + \frac{3}{8}\pa_x\pa_y^2 u,\\
\tilde N(u) & = - \frac{3}{2} |u|^2 \, \pa_x u - \frac{1}{4} u^2 \,\pa_x \overline{u} + \frac{i}{2}\, u\, \pa_x^2 |\nabla|^{-1} (|u|^2).
\end{align*}
This new equation enjoys a scaling symmetry, and the homogeneous Sobolev space that remains invariant under it has exponent $s_c=0$. We discuss this connection in more depth in \Cref{sec:illposedness}. In this regard, we present the following result:

\begin{thm}\label{thm:illposedness}
The Dysthe equation \eqref{eq:Dysthe} is ill-posed in $H^s(\R^2)$ whenever $s<0$, in the sense that the initial data-to-solution map, from $H^s (\R^2)$ to $C([0,T], H^s(\R^2))$, is not $C^3$.
\end{thm}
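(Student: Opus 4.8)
The plan is to use the classical Picard-iteration obstruction to $C^3$ regularity of the data-to-solution map, applied to a high-frequency family dictated by the scaling $s_c=0$ of the principal part $(\tilde L,\tilde N)$. First I would reduce the statement to the failure of a trilinear estimate: if the flow map were $C^3$ from $H^s(\R^2)$ into $C([0,T];H^s(\R^2))$, then, since $u\equiv0$ solves \eqref{eq:Dysthe}, the third Fréchet derivative at $u_0=0$ of the Duhamel formula $u=W(t)u_0+\int_0^t W(t-t')N(u(t'))\,dt'$ would be bounded. Because $N$ is homogeneous of degree three in $(u,\overline u)$, the expansion of $u$ in powers of $u_0$ has no quadratic term and its cubic term is
\[
\mathcal{A}(u_0)(t)=\int_0^t W(t-t')\,N\bigl(W(t')u_0\bigr)\,dt',
\]
so $C^3$ would force $\sup_{t\in[0,T]}\|\mathcal{A}(u_0)(t)\|_{H^s}\lesssim\|u_0\|_{H^s}^3$ for every $u_0$. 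It therefore suffices to exhibit a family for which this fails.

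For the construction I would take $\widehat{\phi_N}$ to be a fixed nonnegative bump rescaled to sit in a ball of radius $\sim\varepsilon_0 N$ around $N\zeta_0$, where $\zeta_0=(\xi_0,\mu_0)$ has $\xi_0\neq0$ (so that the $\pa_x$ in the nonlinearity sees a frequency of size $\sim N$) and $\varepsilon_0$ is a small fixed parameter; this amounts to rescaling a fixed profile according to the symmetry $(t,x,y,u)\mapsto(\lambda^{-3}t,\lambda^{-1}x,\lambda^{-1}y,\lambda u)$ of the principal equation, and gives $\|\phi_N\|_{H^s}\sim N^{s}$ and $\|\widehat{\phi_N}\|_\infty\sim N^{-1}$. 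On the Fourier side, $\mathcal{A}(\phi_N)(t)$ is for each monomial of $N$ a convolution over $\zeta_1+\zeta_2+\zeta_3=\zeta$ of the $\widehat{\phi_N}$'s (one slot conjugated) against the symbol of the monomial, times the Duhamel factor $\int_0^t e^{it'\Omega}\,dt'$, where the resonance function $\Omega$ is built from $\Phi$ in \eqref{dispersive} evaluated at $\zeta,\zeta_1,\zeta_2,\zeta_3$ with signs and one reflection determined by which slot carries the conjugate. The key point is that every interaction producing an output frequency near $N\zeta_0$ forces the conjugated slot near $-N\zeta_0$ and the other two near $+N\zeta_0$, and that the reflection in the conjugated slot makes the leading (linear-in-perturbation) part of $\Omega$ vanish identically on the support, so that $|\Omega|\lesssim \varepsilon_0^2 N^3$ on that support. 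Choosing the time $t\sim\tau N^{-3}$ with $\tau$ small then keeps $|t\Omega|\lesssim\tau\varepsilon_0^2$ uniformly, so $\bigl|\int_0^t e^{it'\Omega}\,dt'\bigr|\gtrsim t$ and, since $\widehat{\phi_N}\ge0$ and $|t\Omega|$ is small, no oscillatory cancellation occurs in the $\zeta$-convolution.

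Putting the pieces together, $|\widehat{\mathcal{A}(\phi_N)}(t,\zeta)|\gtrsim N\cdot\|\widehat{\phi_N}\|_\infty^3\cdot t\cdot\bigl|\{(\zeta_1,\zeta_2)\}\bigr|$ on a set of $\zeta$ of measure $\gtrsim N^2$, with $\|\widehat{\phi_N}\|_\infty\sim N^{-1}$, $\bigl|\{(\zeta_1,\zeta_2)\}\bigr|\sim N^4$, $t\sim N^{-3}$, whence $\|\mathcal{A}(\phi_N)(t)\|_{H^s}\gtrsim c\,N^{s}$ for a fixed $c>0$. Comparing with the bound $\|\mathcal{A}(\phi_N)(t)\|_{H^s}\lesssim\|\phi_N\|_{H^s}^3\sim N^{3s}$ required by $C^3$, one gets $N^{-2s}\lesssim1$, which fails for large $N$ as soon as $s<0$; and since $t\sim\tau N^{-3}\to0$, the contradiction occurs inside any interval $[0,T]$, which is the assertion.

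The step I expect to be the main obstacle is the frequency analysis of the second paragraph: one must choose the direction $\zeta_0$ and the width $\varepsilon_0$ so that the modulation $\Omega$ is of size $o(1/t)$ on a portion of the constraint set large enough to survive the counting while the $\pa_x$-symbol stays of size $N$ — this is precisely where the degenerate, anisotropic structure of $\Phi$ (and the presence of its critical points $(\tfrac23,\pm\tfrac{\sqrt{10}}{3})$) has to be handled with care — together with the bookkeeping of the conjugated slots, which is what guarantees that the several monomials of $N$ do not cancel at order $N$: taking $\varepsilon_0$ small makes $\tfrac i2u\pa_x^2|\nabla|^{-1}(|u|^2)$ and the subprincipal parts of $\Phi$ contribute only $O(\varepsilon_0 N)$ and $O(N^2)$ respectively, so that the surviving contribution comes from $-\tfrac32|u|^2\pa_x u$ and $-\tfrac14 u^2\pa_x\overline u$ and equals $-\tfrac54\,i\,N\xi_0\neq0$. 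None of this is conceptually deep, but the anisotropy makes it technical.
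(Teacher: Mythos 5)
Your proposal is correct, and its skeleton is the same as the paper's: reduce the failure of $C^3$ to the unboundedness of the third Picard iterate $u_0\mapsto\int_0^t W(t-t')N(W(t')u_0)\,dt'$, test it on data concentrated at a high frequency of size $N$, approximate the Duhamel factor by $t$ on a time scale where $|t\widetilde{\Omega}|$ is small, and compare the resulting lower bound $\sim N^{s}$ with the required bound $\sim N^{3s}$ to contradict $s<0$. The implementation differs in a few genuine ways, all of which check out. First, you take a single non-symmetrized bump of full width $\varepsilon_0 N$ at $N\zeta_0$ (with $\xi_0\neq0$) and work at time $t\sim\tau N^{-3}$, exploiting the exact cancellation of the linear-in-perturbation part of the resonance on the constraint set (indeed $\widetilde{\Omega}=\Phi(\zeta_1)-\Phi(\zeta_2)+\Phi(\zeta_3)-\Phi(\zeta)$ with $\zeta=\zeta_1-\zeta_2+\zeta_3$ kills the $\nabla\Phi(N\zeta_0)$ term, leaving $O(\varepsilon_0^2N^3)$); the paper instead uses bumps of width $N_1^{2\varepsilon}$ with $\varepsilon<1/2$, a cruder mean-value bound $|\widetilde{\Omega}|\lesssim N^{2+2\varepsilon}$, and then lets $\varepsilon\to\tfrac12^-$ to reach all $s<0$, so your version handles the ``endpoint'' width directly and avoids the limiting argument (in fact even the mean-value bound would suffice at width $\varepsilon_0 N$, so your quadratic gain, while correct, is a bonus rather than a necessity, and the critical points of $\Phi$ at $O(1)$ frequencies are harmless for $N$ large). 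Second, you rule out cancellation among the monomials of $N(u)$ by a direct symbol computation: since the conjugated slot sits near $-N\zeta_0$, the two derivative terms contribute $-\tfrac32 iN\xi_0+\tfrac14 iN\xi_0=-\tfrac54 iN\xi_0\neq0$, with the $u\,\pa_x^2|\nabla|^{-1}(|u|^2)$ term only $O(\varepsilon_0N)$ because $|u|^2$ lives at frequencies $O(\varepsilon_0 N)$; the paper instead achieves non-cancellation through the odd/even tensor structure of the data ($H=-F$, $S=G$), which forces it to track more interaction configurations. Your single-bump choice makes the interaction combinatorics cleaner (only one configuration contributes), at the price of the data being genuinely complex-valued, which is permitted here. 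Both routes land on the same conclusion and the same threshold $s<0$.
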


\subsection{Outline}

In Section 2, we develop dispersive and Strichartz estimates. In Section 3, we study the smoothing effect in different frequency regions, and we double the gain for the Duhamel term. In Section 4, we establish maximal function estimates in various spaces. In Section 5, we prove \Cref{thm:maintheorem} using a contraction mapping argument. In Section 6, we prove \Cref{thm:illposedness}. Finally, we have an appendix with some technical results that are useful throughout the paper.

\subsection{Notation}

We will denote by $A\lesssim B$ an estimate of the form $A\leq C B$ for some constant $C$ that might change from line to line. Similarly,  $A\lesssim_d B$ means that the implicit constant $C$ depends on $d$. We will also use the big $\O$ and little $o$ notation, e.g. $A=\O_d(B)$ when $A=\O(B)$ as $d\rightarrow 0$. We write $a-$ to denote the number $a-\varepsilon$ for $0<\varepsilon \ll 1$ small enough. Similarly, we denote by $a+$ the number $a+\varepsilon$ for $0<\varepsilon \ll 1$ small enough.

For $1\leq p,q\leq \infty$ and $u:  [0,T]\times\R^2 \longrightarrow \C$, we define 
\[\norm{u}_{L^p_x L^q_{T,y}}=\left(\int_{\R} \left(\int_{\R} \int_0^T |u(t,x,y)|^q \, dt\,dy\right)^{\frac{p}{q}} \, dx\right)^{\frac{1}{p}} ,\]
with the usual modifications when $p$ or $q=\infty$. For $u: [0,\infty) \times \R^2 \longrightarrow \C$, we will use the notation $L^p_x L^q_{t,y}$ instead, meaning $T=\infty$. We will also write $L^{p}_{T,x,y}$ in the case $p=q$.

We also use the standard notation for the spatial Fourier transform 
\[\widehat{f}(\xi)=\int_{\R} e^{-ix\cdot \xi} \, f(x) \, dx ,\]
as well as $f^{\vee}$ for the inverse Fourier transform. 

We will denote by $\norm{f}_{H^s(\R^2)} =\norm{\langle\nabla\rangle^s f}_{L^2(\R^2)}$ the usual Sobolev norm, where $\langle\nabla\rangle^s$ corresponds to the Fourier multiplier operator with symbol $(1+ |\xi|+|\mu|)^{s}$. We will sometimes use $D_x$ for the Fourier multiplier operator with symbol $\xi$. Finally, we will denote by $C([0,T],H^s (\R^2))$  the space of continuous functions $u$ from a time interval $[0,T]$ to $H^s(\R^2)$ equipped with the norm $\sup_{t\in [0,T]}\norm{u(t,\cdot)}_{H^s(\R^2 )}$.

\subsection{Acknowledgements}

We would like to thank Luis Vega for his useful suggestions and references regarding the smoothing effect. We would also like to thank Mohammad Farazmand and Themistoklis Sapsis for some helpful conversations about their work.

\section{Strichartz estimates}\label{c3:sec:Strichartz}
	
	We will first focus on the linear equation. By taking the Fourier transform with ${(x,y)\mapsto(\xi,\mu)}$, one finds that the solution to the linear equation is:
	\begin{equation}\label{eq:linearflow}
		u(t,x,y)=W(t)u_0(x,y):= \int_{\R^2} e^{ix\xi+iy\mu+i t\,\Phi(\xi,\mu)} \, \widehat{u_0}(\xi,\mu)\, d\xi d\mu,
	\end{equation}
	where
	\begin{equation}\label{eq:phase}
		\Phi(\xi,\mu)=\frac{1}{16}\xi^3 - \frac{3}{8} \xi \mu^2 - \frac{1}{8}\xi^2 + \frac{1}{4}\mu^2 +\frac{1}{2}\xi.
	\end{equation}
 This function has two critical points: $(\frac{2}{3},\pm \frac{\sqrt{10}}{3})$. There is also one zero of the Hessian: $(\frac{2}{3},0)$. As explained in the introduction, this is why we must divide the frequency space into different regions, in which the behavior is quite different.
	
\subsection{Dispersive estimates}
	
	The following four regions in Fourier space will be important to our analysis:
	\begin{align}\label{eq:regions}
	\mathcal{R}_0 & := \left\{ (\xi,\mu)\in\R^2 \mid |\xi|\leq 100 \right\},\\
	\mathcal{R}_1 & := \left\{ (\xi,\mu) \in \R^2 \mid |\xi|>100,\ |\mu|<\frac{|\xi|}{200}\right\},\\
	\mathcal{R}_2 & := \left\{ (\xi,\mu) \in \R^2 \mid |\xi|>100,\ |\mu|>\frac{|\xi|}{200}\right\},\\
	\mathcal{R}_3 & := \mathcal{R}_1  \cup \mathcal{R}_2.
	\end{align}
	
	Fix a function $\psi\in C_c^{\infty}(\R)$ such that $0\leq \psi\leq 1$, $\psi=1$ in the ball $B(0,100)$ and $\mbox{supp}(\psi)\subset B(0,200)$. 
	We define
	\begin{align}\label{eq:cut-offs}
	\chi_0 (\xi) & := \psi(\xi),\\
	\chi_1 (\xi,\mu) & :=  [1-\psi(\xi/\mu)]\cdot [1-\psi (\xi)],\\
	\chi_2 (\xi,\mu) & := \psi(\xi/\mu)\, [1-\psi (\xi)],\\
	\chi_3 (\xi) & := 1-\psi(\xi),.
	\end{align}
	In this way, $\chi_i$ corresponds to the region $\mathcal{R}_i$.
	
	For $k=0,1,2,3$, let us write 
	\[W_k (t)f(x,y):= \int_{\R^2} e^{ix\xi+iy\mu+i t\,\Phi(\xi,\mu)} \, \widehat{f}(\xi,\mu)\,\chi_k (\xi,\mu)\, d\xi d\mu.\]
	
	For the purpose of deriving Strichartz estimates, only two regions matter: $\mathcal{R}_0$ and $\mathcal{R}_3$. However, we will need the subdivision given by $\mathcal{R}_1$ and $\mathcal{R}_2$ in the next section.

We now present dispersive estimates for $W_0(t)$ and $W_3(t)$. The following results were presented in the second author's PhD thesis \cite{Kristin}. Similar results have been obtained by Ben-Artzi-Koch-Saut for a general class of  third-order dispersive equations in 2D \cite{BenArtziKochSaut} (see also \cite{Strich3,Strich2,Strich1,Strich4}). In fact their results yield an overall decay of $|t|^{-2/3}$ in $\R^2$. In this work, we will only exploit the large-frequency estimate \eqref{eq:dispW3} where we have an improved decay of $|t|^{-1}$ (see also Theorem 7.5 in \cite{BenArtziKochSaut}), whose proof we give for completeness.

\begin{prop}
	For $u_0\in L^1_{x,y}$, we have the following dispersive estimates:
		\begin{align}
		\norm{W_0(t) u_0}_{L_{x,y}^\infty} & \lesssim |t|^{-1/2}\norm{u_0}_{L_{x,y}^1},\label{eq:dispW0}\\
		 \norm{W_3(t) u_0}_{L_{x,y}^\infty} & \lesssim |t|^{-1}\norm{u_0}_{L_{x,y}^1}.\label{eq:dispW3}
		\end{align}
\end{prop}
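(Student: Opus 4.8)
The plan is to prove both dispersive estimates by stationary phase analysis applied to the oscillatory integrals defining $W_0(t)$ and $W_3(t)$, writing the kernel as
\[
K_k(t,x,y) = \int_{\R^2} e^{i(x\xi + y\mu + t\,\Phi(\xi,\mu))}\,\chi_k(\xi,\mu)\,d\xi\,d\mu,
\]
so that $W_k(t)u_0 = K_k(t,\cdot)\ast u_0$, and it suffices to show $\norm{K_0(t,\cdot)}_{L^\infty_{x,y}} \lesssim |t|^{-1/2}$ and $\norm{K_3(t,\cdot)}_{L^\infty_{x,y}} \lesssim |t|^{-1}$ uniformly in $(x,y)$; the estimates then follow by Young's inequality. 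For the low-frequency piece $W_0$, the cutoff $\chi_0 = \psi$ is compactly supported, so the $\mu$-integral has no stationary phase issue at large frequency; the decay $|t|^{-1/2}$ should come from the $\xi$-direction alone, using that $\pa_\xi^2 \Phi = \frac{3}{8}\xi - \frac14$ vanishes only at $\xi = \frac23$ while $\pa_\xi^3\Phi = \frac38 \neq 0$, so a van der Corput estimate of order two (in $\xi$) together with a trivial $L^\infty$ bound on the compactly supported $\mu$-integral yields $|t|^{-1/2}$. One has to be slightly careful near $\xi = \frac23$: there van der Corput with the third derivative gives only $|t|^{-1/3}$ in $\xi$, which is still better than $|t|^{-1/2}$ is worse — so actually near that point I would instead extract decay from the $\mu$-variable, where $\pa_\mu^2\Phi = \frac12 - \frac34\xi = -\frac18 \neq 0$ at $\xi=\frac23$, giving a clean $|t|^{-1/2}$ from the $\mu$-integral; away from $\xi=\frac23$ the $\xi$-integral gives $|t|^{-1/2}$ directly. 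Partitioning $\mathcal R_0$ into a neighborhood of $\{\xi = \frac23\}$ and its complement and combining the two bounds gives \eqref{eq:dispW0}.

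For the high-frequency estimate \eqref{eq:dispW3}, the goal is the full two-dimensional decay $|t|^{-1}$, which requires a nondegenerate Hessian of $\Phi$ on the support of $\chi_3$. Compute
\[
\operatorname{Hess}\Phi = \begin{pmatrix} \frac38\xi - \frac14 & -\frac34\mu \\ -\frac34\mu & \frac12 - \frac34\xi \end{pmatrix},
\qquad
\det \operatorname{Hess}\Phi = \left(\tfrac38\xi - \tfrac14\right)\left(\tfrac12 - \tfrac34\xi\right) - \tfrac9{16}\mu^2.
\]
On $\mathcal R_3 = \{|\xi| > 100\}$ the leading behavior of the first factor is $\sim \frac38\xi$, of the second $\sim -\frac34\xi$, so the product is $\sim -\frac{9}{32}\xi^2$, and the determinant is $\sim -\frac{9}{32}\xi^2 - \frac9{16}\mu^2$, which is bounded away from zero (indeed $\lesssim -(\xi^2 + \mu^2)$) on the whole region $|\xi|>100$ regardless of $\mu$. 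Thus the phase is nondegenerate there, and the standard stationary phase / nonstationary phase decomposition in two dimensions — dyadically decomposing in $|(\xi,\mu)| \sim 2^j$, rescaling, and applying the two-dimensional stationary phase lemma with control on the size of the Hessian and its inverse — yields the $|t|^{-1}$ decay, summing the dyadic pieces because the Hessian lower bound improves with $j$. Here I would invoke the form of the stationary phase estimate that allows for a large Hessian (gaining extra decay per dyadic block) so that the sum over $j \geq \log_2 100$ converges; alternatively one integrates by parts in the non-stationary regime and applies a scaled van der Corput in the stationary regime.

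The main obstacle, and the reason the region $\mathcal R_3$ (rather than all of $\R^2$) is where we claim the $|t|^{-1}$ bound, is precisely the degeneracy of $\det \operatorname{Hess}\Phi$ near the two critical points $(\frac23, \pm\frac{\sqrt{10}}{3})$ and near the Hessian-zero point $(\frac23, 0)$: there $\frac38\xi - \frac14 = -\frac18 < 0$ and $\frac12 - \frac34\xi = -\frac14 < 0$, so $\det = \frac1{32} - \frac9{16}\mu^2$, which vanishes at $\mu^2 = \frac1{18}$ — so the full-plane $|t|^{-1}$ decay genuinely fails and one only gets the weaker $|t|^{-2/3}$ globally, as remarked. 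Restricting to $|\xi| > 100$ sidesteps this entirely since the degeneracy set lies in $\{|\xi| = \frac23\} \subset \mathcal R_0$. The only real care needed in the high-frequency argument is uniformity in the auxiliary parameters when $|\mu|$ is comparable to or much larger than $|\xi|$ — but since $\det \operatorname{Hess}\Phi \sim -(\xi^2 + \mu^2)$ throughout $\mathcal R_3$ and the cutoff $\chi_3$ depends only on $\xi$, the dyadic decomposition in $|(\xi,\mu)|$ handles both the $\mathcal R_1$ and $\mathcal R_2$ sub-regimes uniformly, and no separate treatment is required for this particular proposition.
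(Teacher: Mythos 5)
Your treatment of the low-frequency estimate \eqref{eq:dispW0} has a genuine gap, rooted in two computational slips. First, $\chi_0(\xi)=\psi(\xi)$ cuts off only in $\xi$, so in the kernel $K_0$ the $\mu$-integration runs over all of $\R$: there is no ``trivial $L^\infty$ bound on the compactly supported $\mu$-integral'', and the decay $|t|^{-1/2}$ cannot come from the $\xi$-direction alone. The $\mu$-integral must be treated as a Fresnel (oscillatory) integral, and it is exactly this integral that produces the factor $|t|^{-1/2}$, its modulus being $\sim |t(\tfrac14-\tfrac38\xi)|^{-1/2}$. Second, your fallback near $\xi=\tfrac23$ rests on the claim that $\pa_\mu^2\Phi=\tfrac12-\tfrac34\xi$ equals $-\tfrac18$ at $\xi=\tfrac23$; in fact it vanishes there (as does $\pa_\xi^2\Phi=\tfrac38\xi-\tfrac14$), which is precisely the degenerate point $(\tfrac23,0)$ of the Hessian noted in the paper, and indeed the coefficient of $\mu^2$ in $\Phi$ vanishes along the whole line $\xi=\tfrac23$. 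So near $\xi=\tfrac23$ neither of your two mechanisms applies: van der Corput in $\xi$ via the third derivative only yields $|t|^{-1/3}$, and the $\mu$-direction degenerates. The estimate is nevertheless true, but one needs the balancing that the paper exploits: performing the $\mu$-integral exactly gives $t^{-1/2}\,|1-\tfrac32\xi|^{-1/2}$, and the singularity $|\xi-\tfrac23|^{-1/2}$ is then simply integrated over the compact $\xi$-support of $\chi_0$ (equivalently, on $|\xi-\tfrac23|\sim\delta$ the degenerating $\mu$-decay $(t\delta)^{-1/2}$ is compensated by the $\xi$-measure $\delta$). Without this step your argument does not close \eqref{eq:dispW0}.

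For the high-frequency estimate \eqref{eq:dispW3} your route is different from the paper's and is viable: the Hessian computation on $\mathcal{R}_3$ is correct ($|\det\operatorname{Hess}\Phi|\gtrsim \xi^2+\mu^2$ for $|\xi|>100$), and a dyadic decomposition $|(\xi,\mu)|\sim 2^j$ followed by rescaled two-dimensional stationary/non-stationary phase yields $O(|t|^{-1}2^{-j})$ per block and hence $|t|^{-1}$ after summation. To make this rigorous you must invoke a stationary-phase lemma whose constants depend only on a lower bound for $|\det\operatorname{Hess}|$ on the (rescaled) support and on finitely many derivative bounds, uniformly in $x,y$, since the saddle-type critical point may lie anywhere relative to each block; this is exactly the uniformity issue you flag but do not carry out. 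The paper instead integrates $\mu$ out exactly and then runs a one-dimensional van der Corput analysis on the resulting phase $\psi_{t,x,y}(\xi)$, using that on $\mathcal{R}_3$ the two terms of $\psi_{t,x,y}''$ cannot cancel; that reduction avoids two-dimensional stationary-phase machinery at the price of tracking a $(t,x,y)$-dependent one-dimensional phase. (Your parenthetical evaluation of the diagonal Hessian entries at the critical points, $-\tfrac18$ and $-\tfrac14$, is also incorrect — both entries vanish at $\xi=\tfrac23$ — but this does not affect the argument on $\mathcal{R}_3$.)
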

\begin{proof}
	We write $W_k (t)u_0= I_k (t) \ast u_0$ for $k=0,3$, where the convolution is in the variables $x$ and $y$, and
	\[ I_k (t,x,y) = \int e^{ix\,\xi+iy\,\mu-it\,\Phi(\xi,\mu)}\, \chi_k(\xi)\,d\xi\,d\mu.\]

We first prove \eqref{eq:dispW3}, and we will assume that $t>0$ for simplicity. 
	We first perform the $\mu$-integral of $I_3$, i.e.
	\[ f(t,\xi,y):= \int e^{-it\, \left(\frac14-\frac38\xi\right)\, \mu^2+iy\,\mu}\,d\mu\sim \frac{e^{iy^2/[t(3\xi/2-1)]}}{\sqrt{t} \left(\frac32\xi-1\right)^{1/2}},\]
	and rewrite 
	\[
	I_3 (t,x,y)= \int e^{ix\xi-it\left(\frac1{16}\xi^3-\frac18\xi^2+\frac12\xi\right)}\, \chi_3(\xi) \, f(t,\xi,y)\,d\xi=\frac{1}{\sqrt{t}}\int\frac{\chi_3(\xi)}{|1-\frac32\xi|^{1/2}}e^{it\,\psi_{t,x,y}(\xi)}\,d\xi,
	\]
	for the phase
	\begin{equation}\label{psi}
	\psi_{t,x,y}(\xi) = -\frac{1}{16}\, \xi^3+\frac18\, \xi^2-\frac12\, \xi+\frac{x\,\xi}{t}-\left(\frac{|y|}{t}\right)^2\left(\frac32\xi-1\right)^{-1}.
	\end{equation}
	Note that $|1-3\xi/2|\gg 1$ because we are in region $\mathcal{R}_3$, and we are assuming that we are in the case $\xi>100$ as an example.
	
	The critical points of the phase are $\displaystyle{\xi_a=\frac23-Z}$ and $\displaystyle{\xi_b=\frac23+Z}$ where
	\[
	Z = \sqrt2\, \sqrt{-\left(5+\frac{12x}t\right) + \sqrt{\left(5+\frac{12x}t\right)^2+72\left( \frac{|y|}t \right)^2}}.
	\]
We now study whether $\psi_{t,x,y}'(\xi)$ is monotonic in order to apply the Van der Corput lemma. We have
	\begin{equation}\label{psidoubleprime}
	\psi_{t,x,y}''(\xi) = -\frac38\xi+\frac14-\frac92\left(\frac{|y|}{t}\right)^2\left(\frac32\xi-1\right)^{-3}.
	\end{equation}	
	We then have three cases to consider:
	\begin{enumerate}
		\item[(i)] Both $\xi_a$ and $\xi_b$ lie in the region $|\xi|<100$.
		\item[(ii)] Only one of either $\xi_a$ or $\xi_b$ lies in the region $|\xi|>100$ while the other lies in the region $|\xi|<100$.
		\item[(iii)] Both $\xi_a$ and $\xi_b$ lie in the region $|\xi|>100$.
	\end{enumerate}
	
	For Case (i), $\xi_a$ and $\xi_b$ are outside the region in which $\chi_3$ is nonzero and therefore $\psi_{t,x,y}(\xi)$ has no critical points in the region of interest. Since $\xi>100$, one can factor out the term $-\frac38\xi+\frac14$ in \cref{psidoubleprime} to show that
	\[
	|\psi_{t,x,y}''(\xi)|\geq C>0
	\]
	for $C$ independent of $\xi$, and hence $\psi_{t,x,y}'(\xi)$ is monotonic. By \Cref{thm:VanderCorput},
	\[
	\Big | \frac{1}{\sqrt{t}}\int \frac{\chi_3(\xi)}{|1-\frac32\xi|^{1/2}}e^{it\psi_{t,x,y}(\xi)}\,d\xi\Big | \lesssim  \frac{1}{\sqrt{t}} \cdot \frac{1}{t} = t^{-3/2}.
	\]
	for $\xi$ satisfying the scenario in Case (i).
	
	We now turn to Case (ii). Without loss of generality, suppose that $\xi_a$ lies in the region in which $|\xi|>100$ and $\xi_b$ does not. We consider the set
	\[
	\Omega_a=\{\xi : |\xi|>100\text{ and }\xi\notin[\xi_a-\delta,\xi_a+\delta]\}
	\]
	for some $0<\delta\ll1$. For $\xi\in\Omega_a$, the same argument as in Case (i) yields  $|\psi_{t,x,y}''(\xi)|>C$
	for some $C$ independent of $\xi$. Case (iii) is analogous, except we must restrict $\xi\in \Omega_a \cap \Omega_b$. In both cases, $\psi_{t,x,y}'(\xi)$ is monotonic in the corresponding region, and therefore \Cref{thm:VanderCorput} yields:
	\[
	\frac{1}{\sqrt{t}}\int_{\Omega_a \cap \Omega_b} \frac{\chi_3(\xi)}{|1-\frac32\xi|^{1/2}}e^{it\psi_{t,x,y}(\xi)}\,d\xi = \O (t^{-3/2}).
	\]
	Finally, we compute the contributions from the critical points. To do this, we first define the functions $a(\xi)$ and $b(\xi)$ by
\[a(\xi) := \frac{\chi_a(\xi)\chi_3(\xi)}{\sqrt{1-3\xi/2}},\qquad b(\xi) := \frac{\chi_b(\xi)\chi_3(\xi)}{\sqrt{1-3\xi/2}}.\]
	with $\chi_a(\xi)$ being a smooth non-negative function supported at $[\xi_a-\delta,\xi_a+\delta]$ for some $0<\delta\ll1$, and $\chi_b(\xi)$ defined analogously. The  key observation is that $\psi_{t,x,y}''(\xi)$ does not vanish in the region $\mathcal{R}_3$, in fact it admits a uniform lower bound as before. Then we may use \Cref{thm:VanderCorput} (without monotonicity) to conclude that
	\[
	\left|\frac{1}{\sqrt{t}}\int\frac{\chi_a(\xi)\chi_0(\xi)}{|1-\frac32\xi|^{1/2}}e^{it\psi_{t,x,y}(\xi)}\,d\xi + \frac{1}{\sqrt{t}}\int\frac{\chi_b(\xi)\chi_0(\xi)}{|1-\frac32\xi|^{1/2}}e^{it\psi_{t,x,y}(\xi)}\,d\xi\right| \lesssim |t|^{-1}
	\]
	as $t\to\infty$. By combining this with our results over $\Omega_a\cap\Omega_b$, we ultimately obtain $|I_3 (t,x,y)|\lesssim |t|^{-1}$
	as $t\to\infty$. The Young convolution inequality yields \eqref{eq:dispW3}.
	
Finally, we prove  \eqref{eq:dispW0}. Similar to above, we consider
	\[ I_0(t,x,y) = \int e^{-it\Phi(\xi,\mu)+i(x\xi+y\mu)}\chi_0(\xi)\,d\xi\,d\mu= \frac{1}{\sqrt{t}}\int\frac{\chi_0(\xi)}{|1-\frac32\xi|^{1/2}}e^{it\,\psi_{t,x,y}(\xi)}\,d\xi.\]
	This time we take the absolute value inside the integral, and integrate directly to obtain $ |I_0 (t,x,y)|\lesssim  |t|^{-1/2}$. Young's inequality for convolutions then yields \eqref{eq:dispW0}.
\end{proof}

\subsection{Main estimates}

The proofs of the propositions below follow from the well-known results by Keel and Tao \cite{KeelTao}, after interpolating between the conservation of the $L^2$-norm and \eqref{eq:dispW0}-\eqref{eq:dispW3}, respectively.

\begin{prop}[Large frequency Strichartz estimate]\label{2DDystheStrichartzPropLarge} 
	Assume $(q,r)$ and $(\tilde{q},\tilde{r})$ are Strichartz admissible pairs satisfying
	\begin{equation}\label{DystheLargeStrichartz}
	\frac2q = 1-\frac{2}r.
	\end{equation}
	 with $(q,r)\neq (2,\infty)$. Then
		\begin{align}
		\norm{W_3(t)u_0}_{L_t^qL_{x,y}^r}&\lesssim \norm{u_0}_{L_{x,y}^2}\label{Strichlarge1}\\
		\norm{\int_{\R}W_3(t-t')F(t')\,dt'}_{L_t^qL_{x,y}^r}&\lesssim \norm{F}_{L_t^{\tilde{q}'}L_{x,y}^{\tilde{r}'}}.\label{Strichlarge3}
	\end{align}
\end{prop}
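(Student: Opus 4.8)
The plan is to deduce both \eqref{Strichlarge1} and \eqref{Strichlarge3} from the abstract Strichartz estimates of Keel and Tao \cite{KeelTao} applied to the propagator $W_3(t)$ with dispersion exponent $\sigma = 1$. In that framework one only has to supply a uniform $L^2$ bound for $W_3(t)$ and a $TT^*$-type dispersive decay at rate $|t-t'|^{-1}$. It is worth recording at the outset that, because of the frequency cut-off, $W_3(t)$ is \emph{not} unitary and obeys no group law, so the argument is carried out in terms of the operators $W_3(t)W_3(t')^*$ rather than $W_3(t-t')$.

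The energy estimate is immediate: $W_3(t)$ is the Fourier multiplier with symbol $e^{it\Phi(\xi,\mu)}\chi_3(\xi)$ and $0\le\chi_3\le1$, so Plancherel gives $\norm{W_3(t)f}_{L^2_{x,y}}\le\norm{f}_{L^2_{x,y}}$ for all $t$. For the dispersive part, a direct Fourier computation shows that $W_3(t)W_3(t')^*$ is the Fourier multiplier with symbol $e^{i(t-t')\Phi(\xi,\mu)}\chi_3(\xi)^2$; since $\chi_3^2$ is again smooth and supported where $|\xi|>100$ — in particular away from the two critical points $(\tfrac23,\pm\tfrac{\sqrt{10}}{3})$ and the Hessian zero $(\tfrac23,0)$ of $\Phi$ — the stationary-phase/Van der Corput analysis behind \eqref{eq:dispW3} applies verbatim with $\chi_3$ replaced by $\chi_3^2$ (the uniform lower bound on $\psi_{t,x,y}''$ is untouched), yielding $\norm{W_3(t)W_3(t')^*g}_{L^\infty_{x,y}}\lesssim|t-t'|^{-1}\norm{g}_{L^1_{x,y}}$. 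Interpolating with the $L^2\to L^2$ bound gives $\norm{W_3(t)W_3(t')^*g}_{L^r_{x,y}}\lesssim|t-t'|^{-(1-2/r)}\norm{g}_{L^{r'}_{x,y}}$ for $2\le r\le\infty$.

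With $\sigma=1$ the Keel--Tao admissibility relation $\tfrac1q+\tfrac{\sigma}{r}=\tfrac{\sigma}{2}$ reads $\tfrac1q+\tfrac1r=\tfrac12$, i.e. $\tfrac2q=1-\tfrac2r$, which is exactly \eqref{DystheLargeStrichartz}, and $\sigma=1$ is precisely the borderline case for which the endpoint $(q,r)=(2,\infty)$ is excluded (it is where a convolution with $|t|^{-1}$ in time fails to be of strong type). Feeding the two estimates above into the $TT^*$ argument together with Hardy--Littlewood--Sobolev in $t$ gives \eqref{Strichlarge1}. For \eqref{Strichlarge3} one first inserts a smooth Fourier multiplier equal to $1$ on the support of $\chi_3$ and vanishing near the origin (harmless, since this multiplier acts as the identity against $\chi_3$), which localizes the source term away from the degeneracies of $\Phi$; one then factors the full-line operator through $W_3(t)$ and applies \eqref{Strichlarge1} and its dual, and passes to the retarded operator over $[0,t]$ by the Christ--Kiselev lemma, which applies because every pair satisfying \eqref{DystheLargeStrichartz} has $q>2$. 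All of this is subsumed in the abstract theorems of \cite{KeelTao}.

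I do not anticipate a genuine obstacle here: this is the standard Strichartz scheme. The one point that needs care is the dispersive estimate for $W_3(t)W_3(t')^*$ in the second paragraph — one must be sure the truncations by $\chi_3$, $\chi_3^2$, and the auxiliary multiplier do not degrade the $|t|^{-1}$ decay — and this is handled exactly as in the proof of the previous proposition, since all of these cut-offs are supported away from the critical points and the Hessian zero of $\Phi$, so the uniform lower bound on $\psi_{t,x,y}''$ remains intact.
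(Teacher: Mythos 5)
Your proposal is correct and follows essentially the same route as the paper, which simply invokes the abstract Keel--Tao theorem with $\sigma=1$ after combining the $L^2$ bound with the dispersive estimate \eqref{eq:dispW3}. Your extra care about the non-unitarity of $W_3(t)$ (working with $W_3(t)W_3(t')^*$, whose symbol carries $\chi_3^2$ but is still supported away from the degeneracies of $\Phi$) is a sound clarification of a point the paper leaves implicit; note only that \eqref{Strichlarge3} is the full-line estimate, so the Christ--Kiselev step is not needed for the statement as written.
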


\begin{prop}[Small frequency Strichartz estimates]\label{2DDystheStrichartzPropSmall} 
	Assume $(q,r)$ and $(\tilde{q},\tilde{r})$ are Strichartz admissible pairs satisfying
	\begin{equation}\label{DystheSmallStrichartz}
	\frac2q = \frac12-\frac{2}r.
	\end{equation}
	with $(q,r)\neq (2,\infty)$. Then
	\begin{align}
	\norm{W_0(t)u_0}_{L_t^qL_{x,y}^r}&\lesssim \left\|u_0\right\|_{L_{x,y}^2}\label{Strichsmall1}\\
	\norm{\int_{\R}W_0(t-t')F(t')\,dt'}_{L_t^qL_{x,y}^r}&\lesssim \norm{F}_{L_t^{\tilde{q}'}L_{x,y}^{\tilde{r}'}}.\label{Strichsmall3}
	\end{align}
\end{prop}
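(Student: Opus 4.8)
The plan is to obtain both propositions as direct applications of the abstract Strichartz estimate of Keel and Tao \cite{KeelTao}, one for $W_0$ and one for $W_3$; all of the genuine analysis has already been carried out in the dispersive bounds \eqref{eq:dispW0}--\eqref{eq:dispW3}, so what remains is to feed them into the abstract machine. I describe the argument for \Cref{2DDystheStrichartzPropSmall}; \Cref{2DDystheStrichartzPropLarge} is handled identically.

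I take the Hilbert space to be $L^2(\R^2)$, the underlying measure space to be $\R^2$ with Lebesgue measure, and the one-parameter family to be $U(t):=W_0(t)$. The first hypothesis to check is the uniform energy bound: since $W_0(t)$ is the Fourier multiplier with symbol $e^{it\Phi(\xi,\mu)}\chi_0(\xi)$, which has modulus at most $1$, Plancherel gives $\norm{W_0(t)u_0}_{L^2_{x,y}}\le\norm{u_0}_{L^2_{x,y}}$ for every $t$. The second hypothesis is the dispersive (decay) bound $\norm{W_0(t)W_0(s)^* g}_{L^\infty_{x,y}}\lesssim|t-s|^{-1/2}\norm{g}_{L^1_{x,y}}$. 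Here I would use that $\chi_0(D)$ is a self-adjoint Fourier multiplier commuting with the unitary group $W(t)$, so that $W_0(t)W_0(s)^*=\chi_0(D)^2\,W(t-s)$; its convolution kernel is the oscillatory integral $I_0$ from the proof of \eqref{eq:dispW0}, but with $\psi$ replaced by $\psi^2$. The computation establishing $|I_0(t,x,y)|\lesssim|t|^{-1/2}$ used only that the cut-off is smooth, bounded, and compactly supported in $\xi$ and that $|1-\tfrac32\xi|^{-1/2}$ is locally integrable, so it goes through verbatim with $\psi^2$ in place of $\psi$, giving $|(\chi_0^2 e^{i(t-s)\Phi})^{\vee}(x,y)|\lesssim|t-s|^{-1/2}$ and hence the decay bound by Young's convolution inequality. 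With these two ingredients the Keel--Tao theorem applies with decay exponent $\sigma=\tfrac12$ and yields \eqref{Strichsmall1} together with its dual; the full-line Duhamel estimate \eqref{Strichsmall3} then follows by composing these two bounds when the two admissible pairs coincide, and in general from the retarded inhomogeneous estimate of \cite{KeelTao} together with the Christ--Kiselev lemma, valid on the admissibility line \eqref{DystheSmallStrichartz} away from the excluded endpoint $(q,r)=(2,\infty)$.

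For \Cref{2DDystheStrichartzPropLarge} one repeats the above with $(W_0,\chi_0)$ replaced by $(W_3,\chi_3)$ and with the decay rate $|t|^{-1/2}$ replaced by $|t|^{-1}$ coming from \eqref{eq:dispW3}; the decay exponent is now $\sigma=1$, which is exactly what moves the admissibility line to \eqref{DystheLargeStrichartz} and makes the exclusion of $(q,r)=(2,\infty)$ genuinely necessary, as in the two-dimensional Schr\"odinger case. I do not expect a serious obstacle: the proof is essentially bookkeeping on top of the dispersive estimates already in hand. The only step that is not a pure invocation of \cite{KeelTao} is the observation that the $TT^*$ composition replaces each cut-off $\chi_k$ by $\chi_k^2$; this is harmless because, as noted above, the dispersive estimates depend only on qualitative properties of the cut-offs that $\chi_k^2$ shares with $\chi_k$.
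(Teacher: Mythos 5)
Your proposal is correct and follows essentially the same route as the paper, which deduces both propositions directly from the Keel--Tao abstract Strichartz theorem applied to the $L^2$ bound and the dispersive estimates \eqref{eq:dispW0}--\eqref{eq:dispW3}; your extra bookkeeping (the cut-off becoming $\chi_k^2$ under $TT^*$, and the Christ--Kiselev remark, which is in fact not even needed here since \eqref{Strichsmall3} is the untruncated estimate and follows by composing $T$ with $T^*$ for any pair of admissible exponents) only makes explicit what the paper leaves implicit.
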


	\section{Smoothing effect}\label{smoothingeffect}
	\subsection{Introduction}
	In this section ,we divide our frequency space into three regions: $\mathcal{R}_0$, $\mathcal{R}_1$, and $\mathcal{R}_2$, see \eqref{eq:regions}. We start with the high-frequency regions $\mathcal{R}_1$ and $\mathcal{R}_2$, whereas the low-frequency region $\mathcal{R}_0$ will be treated separately at the end of this section.
	
	The reason why we distinguish these regions is because the behavior of $\Phi (\xi,\mu)$ is different in each of them. For instance, the only critical points fall in the region $\mathcal{R}_0$. In $\mathcal{R}_1$ we have that
	\begin{equation}\label{eq:region1}
		|\pa_{\xi} \Phi (\xi,\mu)| = \Big | \frac{3}{16}(\xi^2 - 2 \mu^2) - \frac{1}{4}\xi+\frac{1}{2}\Big | \gtrsim |\xi|^2\quad \mbox{whenever}\quad (\xi,\mu)\in \mathcal{R}_1.
	\end{equation}
	In the other region, $\mathcal{R}_2$, this lower bound is false and in fact this partial derivative can vanish. However, we have that 
	\begin{equation}\label{eq:region2}
		|\pa_{\mu} \Phi (\xi,\mu)| = \Big | -\frac{3}{4}\,\xi\,\mu - \frac{1}{2}\mu\Big | \gtrsim |\xi| \, |\mu|\quad \mbox{whenever}\quad (\xi,\mu)\in \mathcal{R}_2.
	\end{equation}
	These features will give rise to different smoothing effects in different frequency regions, which is another example of the anisotropic nature of the Dysthe equation.
	
	\subsection{Large frequency smoothing effect}
	
	For $k=1,2$, let us write 
	\[W_k (t)f(x,y):= \int_{\R^2} e^{ix\xi+iy\mu+i t\,\Phi(\xi,\mu)} \, \widehat{f}(\xi,\mu)\,\chi_k (\xi,\mu)\, d\xi d\mu.\]
	
	Then we have
	\begin{prop}\label{thm:linearsmoothing}
		\begin{align}
			\norm{D_x\, W_1 (t) f}_{L^{\infty}_x L^2_{t,y}} & \lesssim \norm{f}_{L^2_{x,y}}, \label{eq:linearsmoothing1}\\
			\norm{D_x\, W_2 (t) f}_{L^{\infty}_y L^2_{t,x}} & \lesssim \norm{f}_{L^2_{x,y}}.\label{eq:linearsmoothing2}
		\end{align}
	\end{prop}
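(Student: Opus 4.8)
The plan is to adapt the classical Kenig--Ponce--Vega smoothing argument from \cite{KPV}: in region $\mathcal{R}_1$ we exploit that $\pa_\xi\Phi$ does not vanish, while in $\mathcal{R}_2$ we exploit that $\pa_\mu\Phi$ does not vanish. In both cases, after freezing the transverse spatial variable and using Plancherel in the remaining variables, one changes variables to the dispersive relation; the Jacobian that appears is $1/|\pa_\xi\Phi|$ (resp. $1/|\pa_\mu\Phi|$), and the lower bounds \eqref{eq:region1} and \eqref{eq:region2} show that it exactly absorbs the single derivative $D_x$. By density it suffices to prove the estimates for $f$ with $\widehat f\in C_c^\infty(\R^2)$, for which every manipulation below is legitimate.

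For \eqref{eq:linearsmoothing1}, fix $x\in\R$ and write $D_x W_1(t)f(x,y)=\int_{\R}e^{iy\mu}\,G(t,\mu)\,d\mu$ where
\[
G(t,\mu):=\int_{\R}e^{ix\xi+it\Phi(\xi,\mu)}\,\xi\,\widehat f(\xi,\mu)\,\chi_1(\xi,\mu)\,d\xi .
\]
Plancherel in $y$ (for each fixed $t$) together with Fubini gives $\norm{D_xW_1(t)f}_{L^2_{t,y}}^2\sim\int_{\R}\norm{G(\cdot,\mu)}_{L^2_t}^2\,d\mu$. Now fix $\mu$. The $\xi$-support of $\chi_1(\cdot,\mu)$ lies in $\{|\xi|>100,\ |\xi|>100|\mu|\}$, which has two connected components; on each of them $|\pa_\xi\Phi(\xi,\mu)|\gtrsim|\xi|^2$ and $\pa_\xi\Phi(\xi,\mu)$ keeps a fixed sign (this is the computation behind \eqref{eq:region1}, which only uses $|\mu|<|\xi|/100$), so $\Phi(\cdot,\mu)$ is a strictly monotone diffeomorphism there. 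On each component we substitute $\tau=\Phi(\xi,\mu)$, so $d\tau=\pa_\xi\Phi(\xi,\mu)\,d\xi$, apply Plancherel in $t$, and change back; adding the two contributions,
\[
\norm{G(\cdot,\mu)}_{L^2_t}^2\lesssim\int\frac{|\xi|^2}{|\pa_\xi\Phi(\xi,\mu)|}\,|\widehat f(\xi,\mu)|^2\,\chi_1(\xi,\mu)^2\,d\xi\lesssim\int_{\R}|\widehat f(\xi,\mu)|^2\,d\xi .
\]
Integrating in $\mu$ and applying Plancherel once more bounds the left-hand side by $\norm{f}_{L^2_{x,y}}^2$, with a constant independent of $x$ (the variable $x$ enters only through $e^{ix\xi}$). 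Taking the supremum over $x$ gives \eqref{eq:linearsmoothing1}.

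For \eqref{eq:linearsmoothing2} we follow the same scheme, now freezing $y\in\R$, using Plancherel in $x$, and changing variables in $\mu$; the gained derivative is still $D_x$. After Plancherel in $x$ it suffices to estimate, for each fixed $\xi$, the quantity $\norm{H(\cdot,\xi)}_{L^2_t}^2$ where
\[
H(t,\xi):=\int_{\R}e^{iy\mu+it\Phi(\xi,\mu)}\,\xi\,\widehat f(\xi,\mu)\,\chi_2(\xi,\mu)\,d\mu .
\]
For fixed $\xi$ with $|\xi|>100$, the map $\mu\mapsto\Phi(\xi,\mu)$ is a parabola whose unique critical point is $\mu=0$; since the $\mu$-support of $\chi_2(\xi,\cdot)$ lies in $\{|\mu|>|\xi|/200\}$, this critical point is excluded, so $\Phi(\xi,\cdot)$ is a strictly monotone diffeomorphism on each of the two components of that support, where $|\pa_\mu\Phi(\xi,\mu)|\gtrsim|\xi||\mu|$ by \eqref{eq:region2}. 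Substituting $\tau=\Phi(\xi,\mu)$ and using Plancherel in $t$,
\[
\norm{H(\cdot,\xi)}_{L^2_t}^2\lesssim\int\frac{|\xi|^2}{|\pa_\mu\Phi(\xi,\mu)|}\,|\widehat f(\xi,\mu)|^2\,\chi_2(\xi,\mu)^2\,d\mu\lesssim\int\frac{|\xi|}{|\mu|}\,|\widehat f(\xi,\mu)|^2\,d\mu\lesssim\int_{\R}|\widehat f(\xi,\mu)|^2\,d\mu,
\]
where the last inequality uses $|\xi|/|\mu|\leq200$ on $\mbox{supp}\,\chi_2$. Integrating in $\xi$ and using Plancherel gives \eqref{eq:linearsmoothing2}.

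I expect the only mildly delicate points to be the following. First, the dispersive relation is not globally injective in the frozen variable --- for fixed $\mu$ (resp. $\xi$) it has two monotone branches --- so the change of variables must be carried out separately on each connected component of $\mbox{supp}\,\chi_1$ (resp. $\mbox{supp}\,\chi_2$), the two pieces being recombined by the triangle inequality. Second, one must verify that the non-degeneracy bounds \eqref{eq:region1} and \eqref{eq:region2} persist on the (slightly larger) supports of the smooth cut-offs, not merely on $\mathcal{R}_1$ and $\mathcal{R}_2$, since it is there that one needs $|\xi|^2/|\pa_\xi\Phi|\lesssim1$ and $|\xi|^2/|\pa_\mu\Phi|\sim|\xi|/|\mu|\lesssim1$. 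Both are immediate because $\pa_\xi\Phi$ is comparable to $\xi^2$ throughout $\mbox{supp}\,\chi_1$ and $\pa_\mu\Phi$ is comparable to $|\xi||\mu|$ throughout $\mbox{supp}\,\chi_2$.
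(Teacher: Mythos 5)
Your argument is correct and is essentially the paper's own proof: Plancherel in the transverse spatial variable, the change of variables $\tau=\Phi(\xi,\mu)$ in the non-degenerate frequency, and Plancherel in $t$, with the Jacobian $1/|\pa_\xi\Phi|$ (resp.\ $1/|\pa_\mu\Phi|$) absorbing the two powers of $\xi$ via \eqref{eq:region1}--\eqref{eq:region2}. The only additions are points the paper leaves implicit --- splitting $\mbox{supp}\,\chi_1(\cdot,\mu)$ (resp.\ $\mbox{supp}\,\chi_2(\xi,\cdot)$) into its two monotone branches before invoking Plancherel in $t$, and carrying out the ``analogous'' $W_2$ case explicitly, where the key point is $|\xi|^2/|\pa_\mu\Phi|\lesssim|\xi|/|\mu|\lesssim 1$ on $\mbox{supp}\,\chi_2$ --- and you verify both correctly.
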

	\begin{rk}
	Note also that the same result and proof are true for $P\, D_x$ instead of $D_x$, where $P$ is any Fourier multiplier operator whose symbol is bounded. In this regard, see Theorem 4.1  in \cite{KPV2} for more general results. We will use this important remark for the operator $\pa_x |\nabla|^{-1}$ in \Cref{sec:contraction}.
	\end{rk}
	\begin{proof}
		 We only prove this result for $W_1$ since the argument is analogous. By the Plancherel theorem,
		\[ \norm{D_x\, W_1 (t) f}_{L^2_{t,y}}= \norm{\int_{\R}  e^{ix\xi+i t\,\Phi(\xi,\mu)} \, |\xi|\,\widehat{ f}(\xi,\mu)\,\chi_1(\xi,\mu)\, d\xi}_{L^2_{t,\mu}}\]
		Now we do the change of variables $\tilde{\xi}=\Phi(\xi,\mu)$ whose Jacobian is $J(\xi,\mu)\sim |\xi|^{-2}$.
		\begin{multline*}
			\int_{\R}  e^{ix\xi+i t\,\Phi(\xi,\mu)} \, |\xi|\,\widehat{f}(\xi,\mu)\, d\xi= \\
			\int_{\R}  e^{ix\Phi^{-1}_{\mu}(\tilde{\xi})+i t\,\tilde{\xi}} \, |\Phi^{-1}_{\mu}(\tilde{\xi})|\,\widehat{f}(\Phi^{-1}_{\mu}(\tilde{\xi}),\mu)\, \chi_1(\Phi^{-1}_{\mu}(\tilde{\xi}),\mu)\, J(\Phi^{-1}_{\mu}(\tilde{\xi}),\mu) \, d\tilde{\xi}.
		\end{multline*}
		We take the $L^2_t$-norm of the above. The Plancherel theorem yields
		\begin{align*} \norm{D_x \, W_1 (t) f}_{L^2_{t,y}}^2 & \lesssim \int_{\R^2} |\Phi^{-1}_{\mu}(\tilde{\xi})|^2\,|\widehat{f}(\Phi^{-1}_{\mu}(\tilde{\xi}),\mu)|^2\, |\chi_1(\Phi^{-1}_{\mu}(\tilde{\xi}),\mu)|^2\, J(\Phi^{-1}_{\mu}(\tilde{\xi}),\mu)^2 \, d\tilde{\xi} \, d\mu\\
			& = \int_{\R^2} |\xi|^2\,|\widehat{f}(\xi,\mu)|^2\, |\chi_1(\xi,\mu)|^2\, J(\xi,\mu) \, d\xi \, d\mu \lesssim \int_{\R^2}|\widehat{f}(\xi,\mu)|^2\, d\xi \, d\mu=\norm{f}_{L^2_{x,y}}^2.
		\end{align*}
		We finish by taking the supremum in $x$.
	\end{proof}
	
The following corollary is obtained by writing the dual estimate to \eqref{eq:linearsmoothing1} and using the fact that $W(t)$ is unitary in $L^2$.
	
	\begin{cor} With $L^p_T= L^p_t ([0,T])$ for $1\leq p\leq \infty$, we have the following estimates:
		\begin{align*}
			\norm{\pa_x\, \int_0^t  W_1 (t-t') F(t')\, dt'}_{L^{\infty}_x L^2_{T,y}} & \lesssim \norm{F}_{L^1_T L^2_{x,y}},\\
			\norm{\pa_x\, \int_0^t  W_2 (t-t') F(t')\, dt'}_{L^{\infty}_y L^2_{T,x}} & \lesssim \norm{F}_{L^1_T L^2_{x,y}}.
		\end{align*}
	\end{cor}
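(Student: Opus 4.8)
The plan is to deduce both inequalities from \Cref{thm:linearsmoothing} by duality, using only that $W(t)$ is unitary on $L^2_{x,y}$: since $\chi_k$ is real-valued, $W_k(t)^\ast = W_k(-t)$, and since $|\chi_k|\le 1$, each $W_k(t)$ is an $L^2_{x,y}$-contraction. I will run the argument for the first estimate; the second is the same after interchanging the roles of $x$ and $y$ and using \eqref{eq:linearsmoothing2} in place of \eqref{eq:linearsmoothing1}. The first step is to record the dual form of \eqref{eq:linearsmoothing1}. By the mixed-norm H\"older inequality, $|\langle g,D_xW_1(\cdot)f\rangle|\le \norm{g}_{L^1_xL^2_{t,y}}\,\norm{D_xW_1(\cdot)f}_{L^\infty_xL^2_{t,y}}$, so \eqref{eq:linearsmoothing1} together with $D_x^\ast=D_x$, $W_1(t)^\ast=W_1(-t)$ gives
\[
\norm{\int_{\R} W_1(-t)\,D_x\,g(t,\cdot)\,dt}_{L^2_{x,y}}\ \lesssim\ \norm{g}_{L^1_xL^2_{t,y}},
\]
for, say, Schwartz $g$. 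Replacing $g$ by $\mathbf 1_{[a,b]}(t)\,g(t,\cdot)$ and noting that truncating in $t$ does not increase the $L^1_xL^2_{t,y}$-norm, the same bound holds with $\int_\R$ replaced by $\int_a^b$ and $\norm{g}_{L^1_xL^2_{[a,b],y}}$ on the right. This freedom to truncate is exactly what will absorb the retardation.

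Now fix $F$ (Schwartz, say) and pair $\pa_x\int_0^tW_1(t-t')F(t')\,dt'$ against a test function $G\in L^1_xL^2_{T,y}$. Writing $W_1(t-t')=W_1(t)W_1(-t')$, applying Fubini over the simplex $\{0\le t'\le t\le T\}$, and then moving $\pa_x$ and $W_1$ onto $G$ via $\pa_x^\ast=-\pa_x$ and $W_1(s)^\ast=W_1(-s)$, the pairing becomes
\[
-\int_0^T \Big\langle F(t'),\ \pa_x\!\int_{t'}^T W_1(t'-t)\,G(t,\cdot)\,dt\Big\rangle_{L^2_{x,y}}\,dt'.
\]
By Cauchy--Schwarz in $(x,y)$ followed by the trivial estimate in $t'$, this is at most $\norm{F}_{L^1_TL^2_{x,y}}$ times $\sup_{t'\in[0,T]}\big\|\pa_x\int_{t'}^T W_1(t'-t)\,G(t,\cdot)\,dt\big\|_{L^2_{x,y}}$, so the whole matter reduces to bounding this supremum by $\norm{G}_{L^1_xL^2_{T,y}}$, uniformly in $t'$.

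To do so, fix $t'$, factor $W_1(t'-t)=W_1(t')W_1(-t)$, pull $W_1(t')$ out of the integral (it is an $L^2_{x,y}$-contraction), commute $\pa_x$ past $W_1(t')$ and replace it by $D_x$ (same $L^2$-action), reducing to $\big\|\int_{t'}^T W_1(-t)\,D_x\,G(t,\cdot)\,dt\big\|_{L^2_{x,y}}$; by the truncated dual estimate from the first step this is $\lesssim \norm{\mathbf 1_{[t',T]}G}_{L^1_xL^2_{t,y}}\le\norm{G}_{L^1_xL^2_{T,y}}$, as needed. Taking the supremum over $G$ in the unit ball of $L^1_xL^2_{T,y}$ yields the first inequality, and the second follows verbatim with $x\leftrightarrow y$. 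The one point that is not purely formal is the retardation $\int_0^t$: it cannot be removed while it sits inside the propagator acting on a fixed datum, but after dualizing it has turned into a truncation of the \emph{function} $G$, which is harmless by the first step — this is the only obstacle, and it is exactly what the duality is designed to sidestep. (By the remark following \Cref{thm:linearsmoothing}, the same proof applies with $D_x$ replaced by $PD_x$ for any bounded Fourier multiplier $P$.)
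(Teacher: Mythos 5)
Your argument is correct and is essentially the route the paper indicates for this corollary (the dual form of \eqref{eq:linearsmoothing1} combined with unitarity of the propagator on $L^2$), so the two proofs agree in substance. One small slip: the factorization $W_1(t'-t)=W_1(t')W_1(-t)$ is not exact, since that product has Fourier symbol $e^{i(t'-t)\Phi}\chi_1^2$ rather than $e^{i(t'-t)\Phi}\chi_1$; you should instead write $W_1(t'-t)=W(t')W_1(-t)$ with the full propagator $W(t')$, which is unitary on $L^2_{x,y}$ --- this is exactly where unitarity enters, and the rest of your argument goes through unchanged. (For comparison, an even shorter proof, the one the paper spells out for the analogous \Cref{thm:maximal2}, is Minkowski's inequality plus \eqref{eq:linearsmoothing1} applied to $W_1(t)\,W(-t')F(t')$, which avoids dualizing the retarded operator altogether.)
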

	
\subsection{Additional linear estimates}
	
	To control the evolution at low-frequencies, we will need a combination of the smoothing effect and Strichartz estimates. We define $P_0$ to be the Fourier multiplier operator corresponding to the symbol $\chi_0(\xi)$, defined in \eqref{eq:cut-offs}.
	
		\begin{prop}\label{thm:lowfreq} 
		For any $s\in\R$, we have the following estimates:
		\begin{align*}
			\norm{\pa_x\, \langle\nabla \rangle^s P_0 u}_{L^{\infty}_x L^2_{t,y}} & \lesssim \norm{\langle\nabla \rangle^s u}_{L^2_{t,x,y}},\\
			\norm{\pa_x P_0 u}_{L^2_t L^{\infty}_{x,y}} & \lesssim \norm{\langle\nabla\rangle^{\frac{1}{2}+} u}_{L^2_{t,x,y}}.
		\end{align*}
	\end{prop}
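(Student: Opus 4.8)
The plan is to prove both estimates by exploiting only the frequency localization imposed by $P_0$, which forces the $\xi$-variable into the compact set $\{|\xi|\leq 200\}$; this is what makes these ``low-frequency'' bounds essentially elementary compared with the smoothing estimates of \Cref{thm:linearsmoothing}. The key observation is that $\pa_x P_0$ is the Fourier multiplier with symbol $i\xi\,\chi_0(\xi)$, which satisfies $|\xi\,\chi_0(\xi)|\lesssim 1$ on the support of $\chi_0$; hence, up to harmless constants, $\pa_x P_0$ behaves like $P_0$ itself, and being a Fourier multiplier it commutes with $\langle\nabla\rangle^s$. In particular no genuine derivative in $x$ is ever lost.

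For the first estimate I would set $F:=\pa_x\langle\nabla\rangle^s P_0 u$ and note that its partial Fourier transform in $x$ is supported in $|\xi|\leq 200$. Freezing $x$ and applying Cauchy--Schwarz in $\xi$ to the Fourier inversion formula in the $x$-variable produces, for every $(t,y)$, a pointwise bound of $|F(t,x,y)|^2$ by $\int_{|\xi|\leq 200}|\widehat{F}_x(t,\xi,y)|^2\,d\xi$, which is independent of $x$. Integrating in $(t,y)$, taking the supremum in $x$, and using Plancherel in $x$ reduces the left-hand side to $\norm{F}_{L^2_{t,x,y}}$, which is controlled by $\norm{\langle\nabla\rangle^s u}_{L^2_{t,x,y}}$ because $|\xi\,\chi_0(\xi)|\lesssim 1$. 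This is just a Bernstein-type inequality in $x$, valid for every $s\in\R$ since the weight $(1+|\xi|+|\mu|)^{2s}$ appears identically on both sides.

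For the second estimate I would work at fixed $t$ with $g:=(\pa_x P_0 u)(t,\cdot,\cdot)$, whose full spatial Fourier transform is supported in $|\xi|\leq 200$. I would bound $\norm{g}_{L^\infty_{x,y}}\leq\norm{\widehat{g}}_{L^1_{\xi,\mu}}$, insert the splitting $1=\langle\mu\rangle^{\frac12+}\cdot\langle\mu\rangle^{-\frac12-}$, apply Cauchy--Schwarz in $\mu$ (the weight $\langle\mu\rangle^{-1-}$ being integrable), and then Cauchy--Schwarz in $\xi$ over the bounded set $\{|\xi|\leq 200\}$. This yields $\norm{g}_{L^\infty_{x,y}}\lesssim\norm{\langle D_y\rangle^{\frac12+}g}_{L^2_{x,y}}$, and since $|\xi\,\chi_0(\xi)|\lesssim 1$ and $\langle\mu\rangle\leq\langle\nabla\rangle$, the right-hand side is $\lesssim\norm{\langle\nabla\rangle^{\frac12+}u(t,\cdot,\cdot)}_{L^2_{x,y}}$. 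Squaring and integrating in $t$ finishes the argument.

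The only place where anything is genuinely lost is the Sobolev embedding into $L^\infty$ in the one free variable $y$: bounding an $L^\infty_y$ norm by an $L^2$-based norm unavoidably costs half a derivative (plus epsilon) in $\mu$, and this is precisely the origin of the $\langle\nabla\rangle^{\frac12+}$ on the right-hand side of the second estimate. I do not expect any real obstacle in this proof; the only points requiring a little care are keeping track of which variable Bernstein's inequality is applied in, and verifying that the chosen $\mu$-weight is integrable.
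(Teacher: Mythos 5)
Your proposal is correct and follows essentially the same route as the paper: for the first bound, Cauchy--Schwarz in $\xi$ over the compact support of $\chi_0$ plus Plancherel in $x$ (a Bernstein-type argument, matching the paper's Minkowski/Cauchy--Schwarz/Plancherel steps), and for the second, the boundedness of the symbol $\xi\chi_0(\xi)$ together with half-a-derivative in $\mu$, where your weighted Cauchy--Schwarz in $\mu$ is just the Sobolev embedding $H^{\frac12+}_y\hookrightarrow L^\infty_y$ that the paper invokes directly. No gaps.
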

	\begin{proof}
		\begin{enumerate}
			\item For the first estimate we write 
			\begin{equation}\label{eq:P0}
				\pa_x\, \langle\nabla\rangle^s P_0 u = \int_{|\xi|\leq 100} e^{ix\xi}\, \xi  \, \widehat{\langle\nabla\rangle^s\, u}(\xi,t,y)\, \chi_0(\xi)\, d\xi.
			\end{equation}
			We take the $L^2_{t,y}$ norm using the Minkowski inequality:
			\[ \norm{\pa_x \langle\nabla\rangle^s P_0 u }_{L^2_{t,y}} \lesssim  \int_{|\xi|\leq 100} \norm{\widehat{\langle\nabla\rangle^s\, u}(\xi)}_{L^2_{t,y}}\, d\xi.\]
			Finally, we use the Cauchy-Schwartz inequality and the Plancherel theorem:
			\[ \norm{\pa_x \langle\nabla\rangle^s P_0 u}_{L^{\infty}_x L^2_{t,y}} \lesssim \norm{\langle\nabla\rangle^s\, u}_{L^2_{t,x,y}}.\]
			\item By the Minkowski inequality, we take the $L^2_t L^{\infty}_{x,y}$-norm of \eqref{eq:P0} (with $s=0$) and use the Holder inequality to obtain
			\[ \norm{\pa_x \, P_0 u}_{L^2_t L^{\infty}_{x,y}} \lesssim \norm{\widehat{u}}_{L^2_t L^2_{\xi} L^{\infty}_y}\lesssim \norm{\langle \pa_y\rangle^{\frac{1}{2}+} \widehat{u}}_{L^2_t L^2_{\xi} L^2_y}\lesssim \norm{\langle\nabla\rangle^{\frac{1}{2}+} u}_{L^2_{t,x,y}}.\]
			The Sobolev inequality and the Plancherel inequality give the last steps.
		\end{enumerate}
	\end{proof}
	
 Because Strichartz estimates are not available for the pair $(q,r)=(2,\infty)$, we use the Sobolev embedding theorem to get as close as necessary to this space.

	\begin{lem}\label{thm:Strichartz2}
		For $\varepsilon>0$ small enough, consider the admissible pair $r=\frac{2}{\varepsilon}$ and $q=\frac{2}{1-\varepsilon}$. Then
		\[ \norm{u}_{L^2_T L^{\infty}_{x,y}} \lesssim T^{\varepsilon/2}\, \norm{\langle\nabla\rangle^{\varepsilon} u}_{L^q_T L^r_{x,y}}.\]
	\end{lem}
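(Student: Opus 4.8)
The plan is to reduce the spatial $L^\infty_{x,y}$ norm to an $L^r_{x,y}$ norm with $r$ large but finite via Sobolev embedding, and then absorb the loss in time integrability by H\"older's inequality on the finite interval $[0,T]$. First I would recall that the Sobolev embedding $H^{\sigma}(\R^2) \hookrightarrow L^\infty(\R^2)$ holds for any $\sigma>1$, but more efficiently the embedding $W^{\varepsilon,r}(\R^2)\hookrightarrow L^\infty(\R^2)$ holds as soon as $\varepsilon r > 2$, i.e. for $r=\frac{2}{\varepsilon}$ we need a tiny bit more than $\varepsilon$ derivatives; to be safe one takes $r=\frac2\varepsilon$ and $\langle\nabla\rangle^{\varepsilon}$ with an implicit extra room, or simply notes the embedding $\langle\nabla\rangle^{-\varepsilon}: L^r_{x,y}\to L^\infty_{x,y}$ is false at the endpoint but becomes true after enlarging $\varepsilon$ infinitesimally — since the statement is up to $\varepsilon$ small, this is harmless. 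Thus pointwise in $t$,
\[
\norm{u(t,\cdot)}_{L^\infty_{x,y}} \lesssim \norm{\langle\nabla\rangle^{\varepsilon} u(t,\cdot)}_{L^r_{x,y}}.
\]

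Next I would take the $L^2_T$ norm in time of both sides. Since $2 < q = \frac{2}{1-\varepsilon}$, H\"older's inequality in $t$ on the bounded interval $[0,T]$ gives
\[
\norm{g}_{L^2_T} \leq |T|^{\frac12 - \frac1q}\, \norm{g}_{L^q_T} = T^{\varepsilon/2}\, \norm{g}_{L^q_T},
\]
because $\frac12 - \frac1q = \frac12 - \frac{1-\varepsilon}{2} = \frac{\varepsilon}{2}$. Applying this with $g(t) = \norm{\langle\nabla\rangle^{\varepsilon} u(t,\cdot)}_{L^r_{x,y}}$ yields exactly
\[
\norm{u}_{L^2_T L^\infty_{x,y}} \lesssim T^{\varepsilon/2}\, \norm{\langle\nabla\rangle^{\varepsilon} u}_{L^q_T L^r_{x,y}},
\]
which is the claimed estimate. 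One should also check that $(q,r) = \left(\frac{2}{1-\varepsilon}, \frac{2}{\varepsilon}\right)$ is indeed a Strichartz-admissible pair in the relevant sense (so that Propositions \ref{2DDystheStrichartzPropLarge}--\ref{2DDystheStrichartzPropSmall} can later be invoked for this pair): for the large-frequency scaling one needs $\frac2q = 1 - \frac2r$, i.e. $1-\varepsilon = 1-\varepsilon$, which holds; for the small-frequency scaling one would instead use the other admissibility relation, and the point of the lemma is simply that whichever admissible pair is available, it is a fixed distance $\varepsilon$ away from the forbidden endpoint $(2,\infty)$.

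I do not expect any serious obstacle here; the only mild subtlety is the Sobolev embedding at the borderline exponent $\varepsilon r = 2$, which is genuinely false as an endpoint statement, so one must either state the lemma with a strict inequality $\varepsilon r > 2$ hidden in the "$\varepsilon$ small enough" language, or replace $\langle\nabla\rangle^{\varepsilon}$ by $\langle\nabla\rangle^{\varepsilon+}$ — in the notation of the paper this is exactly what "$\varepsilon$ small enough" is meant to absorb, since any loss can be fed back by shrinking $\varepsilon$. Beyond that, the proof is a two-line combination of Sobolev embedding in space and H\"older in time, and the role of the lemma in the sequel is purely to sidestep the unavailable $L^2_t L^\infty_{x,y}$ Strichartz estimate at the cost of an arbitrarily small power of $T$ and arbitrarily few derivatives.
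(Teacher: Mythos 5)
Your proof is correct and follows essentially the same route as the paper: Sobolev embedding in space to pass from $L^{\infty}_{x,y}$ to $W^{\varepsilon,r}_{x,y}$, then H\"older in time to pass from $L^2_T$ to $L^q_T$ with the factor $T^{\varepsilon/2}$, exactly as the paper does. Your remark about the borderline case $\varepsilon r=2$ in the embedding is a fair observation, but it is absorbed by the ``$\varepsilon$ small enough'' flexibility with which the lemma is applied, so it does not change the argument.
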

	\begin{proof}
		The Sobolev embedding theorem in $\R^2$ allows us to go from $L^{\infty}_{x,y}$ to $W^{\varepsilon,r}_{x,y}$. Then the H\"older inequality in time allows us to go from $L^2_T$ to $L^q_T$ and pick up a factor of $T^{\varepsilon/2}$.
	\end{proof}
	
	\subsection{The Hilbert transform method}\label{sec:Hilbert}
	
	In the remainder of this section, we show how to double the smoothing effect for the Duhamel term. 
In order to do that, we present an important quantity, analogous to the one defined in \cite{KPV}.
	\begin{equation}\label{eq:fullFT}
		\widetilde{W}(t)f(x,y)=\int_{\R^3} e^{ix\xi+iy\mu+it\tau}\, \frac{\widehat{f}(\tau,\xi,\mu)}{\tau - \Phi (\xi,\mu)}\, d\xi d\mu d\tau.
	\end{equation}
	Note that here $\widehat{f}$ denotes the Fourier transform in all three variables $t,x,y$. We will also denote by $\widehat{f}^{(x)}$ the Fourier transform in the variable $x$.
	
	This quantity $\widetilde{W}$ appears as an ansatz of the linear Dysthe equation with an inhomogeneous term $f$, after taking the Fourier transform both in time and space. 
	
	Recall the definition of Hilbert transform as a Fourier multiplier $\widehat{Hg}(\xi)=\mbox{sign}(\xi)\, \widehat{g}(\xi)$.
Our operator $\widetilde{W}$ can be interpreted precisely as a Hilbert transform. Formally:
	\[ \widetilde{W}(t)f(x,y)= \int_{\R^2}  e^{ix\xi+iy\mu}\, (Hg)_{t,\xi,\mu}\left(\Phi(\xi,\mu)\right)\, d\xi d\mu,\]
	where
	\[ g(\tau; t,\xi,\mu)=e^{it\tau}\,\widehat{f}(\tau,\xi,\mu).\]
	An equivalent expression would be:
	\[ \widetilde{W}(t)f(x,y)=\int_{\R^3} e^{ix\xi+iy\mu+i\tilde{\tau} \Phi(\xi,\mu)}\, \mbox{sign}(\tilde{\tau})\, (\mathcal{F}_{\tau}g)(\tilde{\tau};t,\xi,\mu)\, d\tilde{\tau} d\xi d\mu.\]
	Note that $(\mathcal{F}_{\tau}g)(\tilde{\tau};t,\xi,\mu)= \widehat{f}^{(x,y)}(-\tilde{\tau}+t,\xi,\mu)$ where $\widehat{f}^{(x,y)}$ is only the Fourier transform of $f$ with respect to $x$ and $y$. Using this together with the change of variables $t'=t-\tilde{\tau}$, one may show that 
	\begin{equation}
	\label{eq:relationshipW}
		\widetilde{W}(t)f(x,y) = \, 2\int_{0}^t W(t-t')f(t',\cdot)\, dt' -\int_{\R} W(t-t')f(t',\cdot)\, dt' +2\,\int_{-\infty}^0 W(t-t')f(t',\cdot)\, dt'.
	\end{equation}
Thus in order to understand $\int_{0}^t W(t-t')f(t',\cdot)\, dt'$, it is enough to study $\widetilde{W}(t)f$.

In order to justify the formal computations above, we introduce for $k=1,2$:
	\[\widetilde{W}_{k,\varepsilon}(t)f(x,y)=\int_{\varepsilon<|\tau - \Phi(\xi,\mu)|<1/\varepsilon} e^{ix\xi+iy\mu+it\tau}\, \frac{\widehat{f}(\tau,\xi,\mu)}{\tau - \Phi (\xi,\mu)}\,\chi_k(\xi,\mu)\, d\xi\, d\mu\,  d\tau.\]
We  define the Fourier multiplier operator $Q_j$ for $j\in \N$, which corresponds to 
	\[ \widehat{Q_j f}(\xi,\mu)= m_j (\xi,\mu)\, \widehat{f}(\xi,\mu),\]
where $m_j \in C_c^{\infty}(\R^2)$, and 
	\begin{equation}\label{eq:defQj}
		m_j (\xi,\mu)=\left\lbrace\begin{array}{ll}
			1 & \mbox{if}\ 2^{j}\leq \sqrt{\xi^2+\mu^2}\leq 2^{j+1},\\
			0 & \mbox{unless}\ 2^{j-1}\leq \sqrt{\xi^2+\mu^2}\leq 2^{j+2}.
		\end{array}\right.
	\end{equation}
	
The following results, as well as the proof, are based on the work of Kenig, Ponce and Vega for the KdV equation, see \cite{KPV}.  But because the linear operator is two dimensional and anisotropic, this proof is much more technical.

	\begin{prop}\label{thm:1} For any $f\in C_c^{\infty}(\R)_t \otimes C_c^{\infty}(\R)_x \otimes C_c^{\infty}(\R)_y$ and all $(t,x,y)\in\R^3$, we have that 
		\[ \lim_{\varepsilon\rightarrow 0} \widetilde{W}_{k,\varepsilon}(t)Q_jf(x,y) = \int_{\R^2} \left(\lim_{\varepsilon\rightarrow 0}
		\int_{\varepsilon<|\tau - \Phi(\xi,\mu)|<1/\varepsilon} e^{ix\xi+iy\mu+it\tau}\, \frac{\widehat{Q_jf}(\tau,\xi,\mu)}{\tau - \Phi (\xi,\mu)}\,d\tau\right)\, \chi_k (\xi,\mu)\, d\xi d\mu\]
		for any $j\in \N$ and any $k=1,2$.
	\end{prop}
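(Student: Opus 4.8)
The plan is to reduce the asserted identity to an application of the dominated convergence theorem in the $(\xi,\mu)$-integral; the two ingredients are the existence of the inner limit in $\varepsilon$ and a bound on the truncated $\tau$-integral that is uniform in $\varepsilon$ \emph{and} in $(\xi,\mu)$.

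By linearity it suffices to treat $f=f_1\otimes f_2\otimes f_3$ with $f_i\in C_c^\infty(\R)$, so that $\widehat{Q_jf}(\tau,\xi,\mu)=\widehat{f_1}(\tau)\,g_j(\xi,\mu)$, where $g_j(\xi,\mu):=m_j(\xi,\mu)\,\widehat{f_2}(\xi)\,\widehat{f_3}(\mu)\in C_c^\infty(\R^2)$ is bounded and supported in the annulus $2^{j-1}\le\sqrt{\xi^2+\mu^2}\le 2^{j+2}$. Fix $(t,x,y)\in\R^3$ and set $h(\tau):=e^{it\tau}\,\widehat{f_1}(\tau)$, which is a Schwartz function since $f_1$ is compactly supported and $t$ is fixed. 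Unraveling $\widetilde{W}_{k,\varepsilon}$ by Fubini (legitimate for each fixed $\varepsilon>0$, since on the truncated region the integrand is dominated by $\varepsilon^{-1}|\widehat{f_1}(\tau)|\,|g_j(\xi,\mu)|$, which is integrable on $\R^3$), and performing the change of variables $s=\tau-\Phi(\xi,\mu)$, one finds $\widetilde{W}_{k,\varepsilon}(t)Q_jf(x,y)=\int_{\R^2}F_\varepsilon(\xi,\mu)\,\chi_k(\xi,\mu)\,d\xi\,d\mu$ with
\[ F_\varepsilon(\xi,\mu)=e^{ix\xi+iy\mu}\,g_j(\xi,\mu)\,T_\varepsilon\bigl(\Phi(\xi,\mu)\bigr),\qquad T_\varepsilon(a):=\int_{\varepsilon<|s|<1/\varepsilon}\frac{h(a+s)}{s}\,ds. \]

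The core of the argument is an analysis of $T_\varepsilon(a)$ that is uniform in the real parameter $a$. Split $T_\varepsilon(a)=\int_{\varepsilon<|s|<1}\tfrac{h(a+s)}{s}\,ds+\int_{1\le|s|<1/\varepsilon}\tfrac{h(a+s)}{s}\,ds$. Using the oddness of $1/s$ to rewrite the first piece as $\int_{\varepsilon<|s|<1}\tfrac{h(a+s)-h(a)}{s}\,ds$ and bounding the integrand by $\|h'\|_{L^\infty}$ via the mean value theorem, this piece is $\le 2\|h'\|_{L^\infty}$ uniformly in $\varepsilon$ and $a$, and it converges as $\varepsilon\to0$ by dominated convergence; the second piece is $\le\int_{|s|\ge1}|h(a+s)|\,ds\le\|h\|_{L^1}$ uniformly, and converges as $\varepsilon\to0$ because the Schwartz decay of $h$ makes $\tfrac{|h(a+s)|}{|s|}$ restricted to $|s|\ge1$ an integrable majorant. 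Hence $T_\varepsilon(a)$ converges as $\varepsilon\to0$ (to $\mathrm{p.v.}\int_\R\tfrac{h(a+s)}{s}\,ds$), and $|T_\varepsilon(a)|\le 2\|h'\|_{L^\infty}+\|h\|_{L^1}$ for all $\varepsilon>0$ and all $a\in\R$; since $\Phi$ is real-valued, this applies with $a=\Phi(\xi,\mu)$ for every $(\xi,\mu)$.

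Consequently $F_\varepsilon(\xi,\mu)$ converges pointwise in $(\xi,\mu)$ as $\varepsilon\to0$, and $|F_\varepsilon(\xi,\mu)|\lesssim|g_j(\xi,\mu)|$ uniformly in $\varepsilon$. Since $g_j$ is bounded with compact support and $\chi_k$ is bounded, $|g_j|\,|\chi_k|$ is an integrable majorant of $F_\varepsilon\chi_k$ on $\R^2$, so dominated convergence lets us interchange $\lim_{\varepsilon\to0}$ with $\int_{\R^2}\cdots\,d\xi\,d\mu$, which is precisely the claimed identity. The one genuinely delicate point is the bound on $T_\varepsilon(\Phi(\xi,\mu))$ that is uniform in $(\xi,\mu)$: it rests entirely on the Schwartz regularity of $h$ — equivalently, the compact support of $f$ in $t$ — together with the fact that translating the argument of $h$ by the real number $\Phi(\xi,\mu)$ changes neither $\|h\|_{L^1}$ nor $\|h'\|_{L^\infty}$. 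In particular, the anisotropy and the critical points of $\Phi$ play no role here, since on the support of $g_j$ the phase $\Phi$ is merely a bounded real-valued function.
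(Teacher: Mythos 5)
Your proof is correct. Both you and the paper reduce the statement to a dominated convergence argument in $(\xi,\mu)$ for pure tensors $f=f_1\otimes f_2\otimes f_3$, but the way you produce the integrable majorant is genuinely different. The paper identifies the inner limit as $\widehat{Q_jv}(\xi,\mu)\,H(h_t)(\Phi(\xi,\mu))$, dominates the truncation error by the maximal Hilbert transform $H^{\ast}(h_t)(\Phi(\xi,\mu))$, and then proves $\widehat{Q_jv}\,H^{\ast}(h_t)(\Phi)\,\chi_k\in L^1(d\xi\,d\mu)$ by the change of variables $\tilde{\xi}=\Phi(\xi,\mu)$ (Jacobian $\sim|\xi|^{-2}$, using the nondegeneracy of $\pa_\xi\Phi$ on $\mathcal{R}_1$) together with Cauchy--Schwarz and the $L^2$-boundedness of $H^{\ast}$. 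You instead prove the elementary pointwise bound $|T_\varepsilon(a)|\le 2\norm{h'}_{L^\infty}+\norm{h}_{L^1}$, uniform in $\varepsilon$ and in the translation parameter $a$ --- in effect showing directly that $H^{\ast}(h_t)\in L^{\infty}$ because $h_t$ is Schwartz --- so that the majorant is simply a constant times $|g_j|\,|\chi_k|$, integrable by the compact support of $m_j$. Your route is more elementary and, as you note, completely bypasses the geometry of $\Phi$ (no change of variables, no use of \eqref{eq:region1}, and it treats $k=1,2$ identically), at the price of constants depending on $t$ and on Schwartz norms of $\widehat{f_1}$ rather than on $\norm{Q_jf}_{L^2_{t,x,y}}$; the paper's $L^2$-based domination is the quantitative form that meshes naturally with the change-of-variables machinery reused in the subsequent propositions and with the density extension mentioned in the remark. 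For the statement as given, both arguments are complete.
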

	\begin{proof}
		We write $f(t,x,y)=w(t)\, v(x,y)$ with $w\in C_c^{\infty}(\R)$ and $v\in C_c^{\infty}(\R)\otimes C_c^{\infty}(\R)$. We set $h_t(\tau)= e^{it\tau} \widehat{w}(\tau)$ and write
		\[ \lim_{\varepsilon\rightarrow 0}
		\int_{\varepsilon<|\tau - \Phi(\xi,\mu)|<1/\varepsilon} e^{ix\xi+iy\mu+it\tau}\, \frac{\widehat{Q_jf}(\tau,\xi,\mu)}{\tau - \Phi (\xi,\mu)}\,d\tau=\widehat{Q_jv}(\xi,\mu) \, H(h_t)(\Phi(\xi,\mu)).\]
		Since $h_t\in C_c^{\infty}(\R)$, $H(h_t)\in H^s(\R)$ for all $s>0$ and thus $H(h_t)\in L^{\infty}(\R)$. Therefore,
		\[\int_{\R^2} e^{ix\xi+iy\mu} \, \widehat{Q_jv}(\xi,\mu)\, H(h_t)(\Phi(\xi,\mu))\, \chi_k (\xi,\mu)\, d\xi d\mu\]
		is absolutely convergent.
		
		Let 
		\begin{equation}\label{eq:maximalH}
			H^{\ast}(g)(x):=\sup_{\varepsilon>0} | H_{\varepsilon}g(x)|=\sup_{\varepsilon>0} \Big | \int_{\varepsilon < |x-y|<\varepsilon^{-1}} \frac{g(y)}{x-y}\, dy\Big|.
		\end{equation}
		
		In order to prove the proposition we need to show that 
		\[ \lim_{\varepsilon\rightarrow 0} \Big | \int_{\R^2} e^{ix\xi+iy\mu} \, \widehat{Q_jv}(\xi,\mu)\, \left[ H(h_t)(\Phi(\xi,\mu))- H_{\varepsilon}(h_t)(\Phi(\xi,\mu))\right]\, \chi_k (\xi,\mu)\,d\xi d\mu\Big | = 0.\]
		
		But recall that $H_{\varepsilon}g\rightarrow Hg$ pointwise, $|H_{\varepsilon}g(x)-Hg(x)|\leq 2 H^{\ast}g(x)$ and that $H^{\ast}:L^p \rightarrow L^p$ continuously for $1<p<\infty$.
		
		We finish by using the dominated convergence theorem thanks to the fact that 
		\[ \widehat{Q_jv}\, H^{\ast}(h_t)(\Phi)\, \chi_k\in L^1(d\xi d\mu).\]
		
		In order to justify this last fact we do a change of variables. Assume $k=1$ for simplicity, since the proof is analogous in the other case. We set $\tilde{\xi}=\Phi(\xi,\mu)$, $\tilde{\mu}=\mu$, whose Jacobian is $J(\xi,\mu)\sim |\xi|^{-2}$. Then we use the Cauchy-Schwartz inequality twice:
		\begin{align*}
			\left(\int |\widehat{Q_jv}(\xi,\mu)| |H^{\ast}(h_t)(\Phi(\xi,\mu))| \, \chi_1 (\xi,\mu)\,d\xi d\mu\right)^2 & = \left(\int |\widehat{Q_jv}(\tilde{\xi},\tilde{\mu})| |H^{\ast}(h_t)(\tilde{\xi})| \, J(\tilde{\xi},\tilde{\mu})\,\chi_1 \, d\tilde{\xi} d\tilde{\mu}\right)^2\\
			& \hspace{-4cm}\lesssim \norm{H^{\ast}(h_t)}_{L^2_{\tilde{\xi}}}^2 \,  \int_{\tilde{\xi}} \left( \int_{\tilde{\mu}} |\widehat{Q_jv} \, J\,\chi_1 | \right)^2  \lesssim \norm{h_t}_{L^2_{\tau}}^2 \, \int_{\tilde{\xi}} 2^j\,|\widehat{Q_jv}|^2\, |J|^2\, \chi_1^2 \\
			& \hspace{-4cm} \lesssim \norm{w}_{L^2_{t}}^2 \, \norm{Q_j v}_{L^2_x L^2_y}^2 = \norm{Q_j f}_{L^2_{x,y,t}}^2.
		\end{align*} 
	\end{proof}
	
	The following result guarantees that, after taking two derivatives, we can integrate in one single variable.
	
	\begin{prop}\label{thm:2} For $f\in C_c^{\infty}(\R)_{t}\otimes C_c^{\infty}(\R)_x \otimes C_c^{\infty}(\R)_{y}$ and all $(t,x,y)\in\R^3$, we have that 
		\begin{multline}\label{eq:kernel1}
			\lim_{\varepsilon\rightarrow 0} 
			\int_{\varepsilon<|\tau - \Phi(\xi,\mu)|<1/\varepsilon} e^{ix\xi+iy\mu+it\tau}\, \frac{\xi^2}{\tau - \Phi (\xi,\mu)}\,\widehat{Q_j f}(\tau,\xi,\mu)\, \chi_1 (\xi,\mu)\,d\xi d\mu d\tau=\\
			\int_{\R^2} e^{it\tau+i y\mu}\left(\lim_{\varepsilon\rightarrow 0}
			\int_{\varepsilon<|\tau - \Phi(\xi,\mu)|<1/\varepsilon} e^{ix\xi}\, \frac{\xi^2}{\tau - \Phi (\xi,\mu)}\,\widehat{Q_j f}(\tau,\xi,\mu)\,\chi_1 (\xi,\mu)\,d\xi\right) \, d\mu d\tau.
		\end{multline}
		and 
		\begin{multline}\label{eq:kernel2}
			\lim_{\varepsilon\rightarrow 0} 
			\int_{\varepsilon<|\tau - \Phi(\xi,\mu)|<1/\varepsilon} e^{ix\xi+iy\mu+it\tau}\, \frac{\xi\,\mu}{\tau - \Phi (\xi,\mu)}\,\widehat{Q_j f}(\tau,\xi,\mu)\, \chi_2 (\xi,\mu)\, d\xi\, d\mu\, d\tau=\\
			\int_{\R^2} e^{it\tau+i x\xi}\left(\lim_{\varepsilon\rightarrow 0}
			\int_{\varepsilon<|\tau - \Phi(\xi,\mu)|<1/\varepsilon} e^{iy\mu}\, \frac{\xi\, \mu}{\tau - \Phi (\xi,\mu)}\,\widehat{Q_j f}(\tau,\xi,\mu)\,\chi_2 (\xi,\mu)\,d\mu\right) \, d\xi\,d\tau.
		\end{multline}
	\end{prop}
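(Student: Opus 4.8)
It suffices to prove \eqref{eq:kernel1}; identity \eqref{eq:kernel2} follows by the same argument with the roles of $\xi$ and $\mu$ interchanged, using that $|\pa_{\mu}\Phi(\xi,\mu)|\gtrsim|\xi|\,|\mu|$ on $\mathcal R_2$ (see \eqref{eq:region2}) and performing the change of variables $\tilde\mu=\Phi(\xi,\mu)$ for fixed $\xi$. Observe first that, since $m_j$ is supported in $\{2^{j-1}\le\sqrt{\xi^2+\mu^2}\le 2^{j+2}\}$, the integrand vanishes identically unless $2^j\gtrsim 100$; for such $j$ the support of $m_j\,\chi_1$ is a compact subset of $\mathcal R_1$ on which $|\xi|\sim 2^j$ and $|\mu|\lesssim 2^j$. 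Fix $\varepsilon>0$. On $\{\varepsilon<|\tau-\Phi(\xi,\mu)|<1/\varepsilon\}$ the factor $(\tau-\Phi)^{-1}$ is bounded by $\varepsilon^{-1}$, while $|\xi^2\,\widehat{Q_jf}\,\chi_1|\lesssim 2^{2j}|\widehat f|$ with $(\xi,\mu)$ confined to a compact set and $\widehat f$ Schwartz (hence rapidly decaying in $\tau$); thus the triple integrand is absolutely integrable on that region, and Fubini's theorem identifies the left-hand side of \eqref{eq:kernel1} with $\lim_{\varepsilon\to0}F_\varepsilon$, where $F_\varepsilon$ is the iterated integral obtained by performing the $\xi$-integration first. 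It then remains to pass the limit inside the $(\mu,\tau)$-integral.

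For fixed $\mu$, the bound $|\pa_{\xi}\Phi(\xi,\mu)|\gtrsim|\xi|^2>0$ on $\mathcal R_1$ from \eqref{eq:region1} shows that $\xi\mapsto\Phi(\xi,\mu)$ is a diffeomorphism on each of the two branches of $\mathcal R_1\cap(\R\times\{\mu\})$, with Jacobian $J(\xi,\mu)\sim|\xi|^{-2}$; write $\Phi_\mu^{-1}$ for its inverse. This is the change of variables already used in \Cref{thm:linearsmoothing} and \Cref{thm:1}, and it turns the inner $\xi$-integral into the truncated Hilbert transform, evaluated at the point $\tau$, of the function
\[ G_{\tau,\mu}(\tilde\xi):=\Big[e^{ix\xi}\,\xi^2\,\widehat{Q_jf}(\tau,\xi,\mu)\,\chi_1(\xi,\mu)\,J(\xi,\mu)\Big]_{\xi=\Phi_\mu^{-1}(\tilde\xi)}, \]
i.e.\ the inner integral equals $(H_\varepsilon G_{\tau,\mu})(\tau)$ in the notation of \eqref{eq:maximalH} (note that $\tau$ appears both as a parameter of $G_{\tau,\mu}$ and as the base point of the Hilbert transform). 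For every fixed $(t,x,y,\mu,\tau)$ the function $G_{\tau,\mu}$ is smooth and compactly supported in $\tilde\xi$ — its support is $\Phi_\mu$ applied to the compact set $\{|\xi|\sim 2^j\}\cap\mathcal R_1$, which has measure $\lesssim 2^{3j}$ — so $(H_\varepsilon G_{\tau,\mu})(\tau)\to(H G_{\tau,\mu})(\tau)$ as $\varepsilon\to0$, exactly as in the proof of \Cref{thm:1}. This pins down the pointwise-in-$(\mu,\tau)$ limit of the integrand of $F_\varepsilon$.

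To dominate that integrand uniformly in $\varepsilon$, I would use the elementary inequality, valid for any $g\in C^1(\R)$ with bounded derivative and finite $L^1$-norm and any base point $\tau$,
\[ |(H_\varepsilon g)(\tau)| = \Big|\int_{\varepsilon<|s|<1}\frac{g(\tau-s)-g(\tau)}{s}\,ds+\int_{1<|s|<1/\varepsilon}\frac{g(\tau-s)}{s}\,ds\Big|\le 2\norm{g'}_{L^\infty}+\norm{g}_{L^1}, \]
which is uniform in $\varepsilon$. Applying it with $g=G_{\tau,\mu}$ and reading off, on the support $|\xi|\sim 2^j$, the sizes $\xi^2\sim 2^{2j}$, $J\sim 2^{-2j}$ (differentiating in $\tilde\xi$ brings a factor $J$), the measure $\lesssim 2^{3j}$ of the $\tilde\xi$-support, and the factor $|x|$ produced when $\pa_{\tilde\xi}$ falls on $e^{ix\xi}$, together with the rapid decay of $\widehat f$ and $\pa_{\xi}\widehat f$ in $(\tau,\mu)$, yields a majorant of the form
\[ \sup_{\varepsilon>0}\big|(H_\varepsilon G_{\tau,\mu})(\tau)\big|\ \lesssim_{j,f}\ (1+|x|)\,(1+|\tau|)^{-N}(1+|\mu|)^{-N}\,\mathbf 1_{\{|\mu|\lesssim 2^j\}} \]
for any $N$, which is integrable in $(\mu,\tau)$ for fixed $(t,x,y)$. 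Dominated convergence then gives $\lim_{\varepsilon\to0}F_\varepsilon=\int_{\R^2}e^{it\tau+iy\mu}\big(\lim_{\varepsilon\to0}\int_{\varepsilon<|\tau-\Phi|<1/\varepsilon}e^{ix\xi}\,\xi^2(\tau-\Phi)^{-1}\,\widehat{Q_jf}\,\chi_1\,d\xi\big)\,d\mu\,d\tau$, which together with the Fubini step proves \eqref{eq:kernel1}; \eqref{eq:kernel2} is handled identically.

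The main obstacle is this last domination step: the maximal truncated Hilbert transform $H^{\ast}$ is not bounded on $L^1$, so one cannot simply dominate by $H^{\ast}$ of the $L^1$ profile; the point is to extract enough regularity and compact support from the frequency localization $Q_j$ (and from the smoothness of $\Phi_\mu^{-1}$ on $\mathcal R_1$) to obtain the clean $\norm{g'}_{L^\infty}+\norm{g}_{L^1}$ bound above and hence a genuinely $(\mu,\tau)$-integrable, $\varepsilon$-uniform majorant. The change of variables $\tilde\xi=\Phi(\xi,\mu)$ on each branch and the Fubini bookkeeping are routine, being essentially those already carried out in \Cref{thm:linearsmoothing} and \Cref{thm:1}.
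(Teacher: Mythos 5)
Your proof is correct, and its skeleton coincides with the paper's: identify the truncated triple integral with an iterated integral, change variables $\tilde\xi=\Phi(\xi,\mu)$ for fixed $\mu$ (legitimate on the support of $\chi_1\,m_j$ because $|\pa_\xi\Phi|\gtrsim|\xi|^2$ there) so that the inner integral becomes a truncated Hilbert transform, observe pointwise convergence, and conclude by dominated convergence; the $\chi_2$ case is the mirror image via $|\pa_\mu\Phi|\gtrsim|\xi||\mu|$. Where you genuinely depart from the paper is the construction of the $\varepsilon$-uniform majorant: the paper reduces to tensor data $f=v(x)\,w(t,y)$, writes $\mathcal K_\varepsilon\widehat v=H_\varepsilon(\chi_{\Phi_\mu(\pi(\mathcal R_1))}G_\mu)$, and dominates via the maximal truncated Hilbert transform $H^\ast$ of \eqref{eq:maximalH}, using its $L^2\to L^2$ boundedness uniformly in $\mu$ together with the decay of $\widehat w$; you instead avoid maximal-function theory altogether and use the elementary uniform bound $|H_\varepsilon g(\tau)|\le 2\norm{g'}_{L^\infty}+\norm{g}_{L^1}$, which is available precisely because the frequency localization $Q_j$ (compact support of $m_j$) and the smoothness of $\Phi_\mu^{-1}$ make the transformed profile $G_{\tau,\mu}$ a $C^1$, compactly supported function, with the rapid decay of $\widehat f$ in $(\tau,\mu)$ supplying integrability of the majorant. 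Your route is more self-contained and dispenses with the reduction to pure tensors (you only use that $\widehat f$ is Schwartz) and with the mixed-norm mapping property of $\mathcal K^\ast$; the paper's route is the one that survives when only $L^p$ control of the data is available, which is why it is phrased through $H^\ast$. Two small points you gloss, at about the same level as the paper does: the slice $\{\xi:(\xi,\mu)\in\mathrm{supp}(\chi_1)\}$ has two components, so $\Phi_\mu^{-1}$ is branch-wise defined (the two images are disjoint, so the bookkeeping is harmless), and $\pa_\xi\chi_1$ contains a $\psi'(\xi/\mu)/\mu$ term, which is nevertheless uniformly bounded because $\psi'(\xi/\mu)\neq0$ forces $|\mu|\gtrsim|\xi|/200\gtrsim 1$ on the support. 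Neither affects the validity of your argument.
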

	\begin{proof}
		Let us do the case of $\chi_1$ only, the other one being analogous. The existence of the first limit follows from \Cref{thm:1}. We set $f(t,x,y)=v(x)\, w(t,y)$ for $v\in C_c^{\infty}(\R)$ and $w\in C_c^{\infty}(\R)\otimes C_c^{\infty}(\R)$ and let 
		\begin{align}
			\mathcal{K}_{\varepsilon} \widehat{v}(\mu,\tau) & :=\int_{\varepsilon<|\tau - \Phi(\xi,\mu)|<1/\varepsilon} \frac{\xi^2}{\tau - \Phi (\xi,\mu)}\,\widehat{v}(\xi)\,\chi_1(\xi,\mu)\, m_j (\xi,\mu)\, d\xi,\nonumber\\
			\mathcal{K}\widehat{v}(\mu,\tau) & := \lim_{\varepsilon\rightarrow 0} \mathcal{K}_{\varepsilon}\widehat{v}(\mu,\tau),\qquad \mathcal{K}^{\ast}\widehat{v}(\mu,\tau) := \sup_{\varepsilon>0} |\mathcal{K}_{\varepsilon}\widehat{v}(\mu,\tau)|.\label{eq:K1}
		\end{align}
		If we show that for $\widehat{v}\in\mathcal{S}(\R)$ we have the pointwise convergence given in \eqref{eq:K1}, for almost every $\mu$ and $\tau$, and that $\mathcal{K}^{\ast}:L^{p}_{\xi}\rightarrow L^{q}_{\mu}L^{p}_{\tau} $ continuously for some $p,q\in (1,\infty)$, then we may finish the proof with the dominated convergence theorem, as in \Cref{thm:1}.
		
		We do the change of variables $\tilde{\xi}=\Phi_{\mu} (\xi)=\Phi (\xi,\mu)$, whose Jacobian is $|\det(\nabla\Phi_{\mu})(\xi)|\sim |\xi|^2$.
		Since the Jacobian is nonzero in $\mathcal{R}_1$, we have a differentiable bijection between $\Omega_{\mu,\tau}^{\varepsilon}\cap \pi(R_1)$ and its image, where $\pi$ is the projection $\pi (\xi,\mu)=\xi$. We may even check that:
		\[\Phi_{\mu} (\Omega_{\mu,\tau}^{\varepsilon}\cap \pi(\mathcal{R}_1))=\{\tilde{\xi}\in \R \mid \varepsilon<|\tau - \tilde{\xi}|<1/\varepsilon\}\cap \Phi_{\mu}(\pi(\mathcal{R}_1)).\]
		Then we have
		\begin{multline*}
		\mathcal{K}_{\varepsilon} \widehat{v}(\mu,\tau) = \int_{\Phi_{\mu} (\Omega_{\mu,\tau}^{\varepsilon}\cap\pi(\mathcal{R}_1))} \frac{\Phi_{\mu}^{-1}(\tilde{\xi})^2}{\tau - \tilde{\xi}}\,\widehat{v}\left(\Phi_{\mu}^{-1}(\tilde{\xi})\right)\,\Big |\det(\nabla \Phi_{\mu}^{-1})(\tilde{\xi})\Big | \, \chi_1 \, m_j\, d\tilde{\xi}\\
		= H_{\varepsilon}(\chi_{\Phi_{\mu}(\pi(\mathcal{R}_1))}\,G_{\mu}),
		\end{multline*}
		where we recall that 
		\begin{align*}
			H_{\varepsilon}(G_{\mu})(\mu,\tau) & := \int_{\varepsilon<|\tau - \tilde{\xi}|<1/\varepsilon} \ \frac{1}{\tau-\tilde{\xi}} \, G_{\mu}(\tilde{\xi})\, d\tilde{\xi},\\
			G_{\mu} (\tilde{\xi}) & := \Phi_{\mu}^{-1}(\tilde{\xi})^2\, \,\widehat{v}\left(\Phi_{\mu}^{-1}(\tilde{\xi})\right)\,\Big |\det(\nabla \Phi_{\mu}^{-1})(\tilde{\xi})\Big |\,\chi_1 (\Phi_{\mu}^{-1}(\tilde{\xi}),\mu) \, m_j (\Phi_{\mu}^{-1}(\tilde{\xi}),\mu),
		\end{align*}
		and $\chi_{\Phi_{\mu}(\pi(\mathcal{R}_1))}$ is the characteristic function of the set $\Phi_{\mu}(\pi(\mathcal{R}_1))$.
		
		Remember that $H_{\varepsilon}$ and $H^{\ast}$ were defined in \eqref{eq:maximalH}. Note that as $\varepsilon\rightarrow 0$, $\Omega_{\mu,\tau}^{\varepsilon}\cap \pi(\mathcal{R}_1)\rightarrow \pi(\mathcal{R}_1)$. In order to finish, it would be enough to show that 
		\[ g \mapsto H^{\ast}\left( \chi_{\Phi_{\mu}(\pi(\mathcal{R}_1))} G_{\mu} \right)\]
		is a continuous map. This follows from the properties of the Hilbert transform, since $H^{\ast}$ maps $L^2_{\tilde{\xi}}$ to $L^2_{\tau}$ uniformly in $\mu$.
	\end{proof}
	
		\begin{rk}
	Since $C_c^{\infty}(\R)_t \otimes C_c^{\infty}(\R)_x \otimes C_c^{\infty}(\R)_y$ is dense in $L^p_{x,y} L^q_T$ and $L^q_T L^p_{x,y}$ for any $p,q\in [1,\infty)$, \Cref{thm:1} and \Cref{thm:2} extend to those spaces.
	\end{rk}
	
	
	\subsection{Doubling the smoothing effect}
	
	In the remainder of this section we will show how to double the smoothing effect in the region $\mathcal{R}_1$. The case of the region $\mathcal{R}_2$ is simpler, since the function $\Phi(\xi,\mu)$ has degree 2 when regarded as a function of the second variable $\mu$, for fixed $\xi$.  Analogous results were obtained in \cite{KPV,KochSaut}, among others. In particular, Lemma 2.1 and Lemma 2.2 in \cite{KochSaut} capture the fundamental ideas behind such smoothing effect, which were already present in the work of H\"{o}rmander \cite{Hormander}. Their estimates are for kernels defined globally in $\R^2$, whereas we will develop specific estimates tailored to our frequency-dependent cutoffs. It is therefore important and nontrivial to make sure that the $L^{\infty}$ bounds present in our estimates do not depend on any of the variables involved  in the cutoffs.
	
	We start with some preliminary lemmata:
	
	\begin{lem}\label{thm:zeroes}
		For fixed $\mu,\tau$, consider the solutions to the equation
		\[ \tau - \Phi (\xi,\mu)= 0\quad \mbox{for}\ (\xi,\mu)\in\mathcal{R}_1.\]
		Then there are at most two solutions $\xi_j (\tau,\mu) \in \mathcal{R}_1$ for $j=0,1$. Moreover, if one such solution exists, say $\xi_0$, then there exists some $\varepsilon>0$ independent of $\mu$ and $\tau$ such that 
		\[ \Phi : B(\xi_0, \varepsilon )\rightarrow B\left(\Phi (\xi_0,\mu),\frac{\pa_{\xi}\Phi (\xi_0,\mu)}{2}\, \varepsilon \right) \]
		is a $C^1$ diffeomorphism. In fact, one can take $\varepsilon=|\xi_0|\, 10^{-6}$.
	\end{lem}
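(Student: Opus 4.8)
The plan is to derive both assertions from the lower bound \eqref{eq:region1} for $\partial_\xi\Phi$ on $\mathcal{R}_1$, regarding $\Phi(\cdot,\mu)$ as a real cubic polynomial in $\xi$ with $\mu$ a frozen parameter. For the count of solutions: for fixed $\mu$ the slice $\{\xi:(\xi,\mu)\in\mathcal{R}_1\}$ is the disjoint union of the two rays $\xi>\max(100,200|\mu|)$ and $\xi<-\max(100,200|\mu|)$. I would first observe that $\partial_\xi\Phi(\xi,\mu)=\tfrac{3}{16}(\xi^2-2\mu^2)-\tfrac14\xi+\tfrac12$ is strictly positive on each of these rays: this is immediate from \eqref{eq:region1} together with continuity --- a nowhere-vanishing continuous function on a connected ray, positive as $|\xi|\to\infty$ by the leading term $\tfrac{3}{16}\xi^2$, is positive throughout --- or else by the crude pointwise estimate using $|\mu|<|\xi|/200$ and $|\xi|>100$. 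Hence $\Phi(\cdot,\mu)$ is strictly increasing, in particular injective, on each ray, so $\tau=\Phi(\xi,\mu)$ has at most one root per ray, at most two in $\mathcal{R}_1$.

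For the quantitative local diffeomorphism, fix a root $\xi_0$ with $(\xi_0,\mu)\in\mathcal{R}_1$ and set $\varepsilon:=10^{-6}|\xi_0|$. Since $\varepsilon<|\xi_0|$, every $\xi\in B(\xi_0,\varepsilon)$ has the sign of $\xi_0$ and satisfies $(1-10^{-6})|\xi_0|<|\xi|<(1+10^{-6})|\xi_0|$; in particular $0\notin B(\xi_0,\varepsilon)$. On this ball one bounds the second derivative $\partial_\xi^2\Phi(\xi,\mu)=\tfrac38\xi-\tfrac14$ by $|\partial_\xi^2\Phi(\xi,\mu)|\le\tfrac38(1+10^{-6})|\xi_0|+\tfrac14\le|\xi_0|$ (using $|\xi_0|>100$), while \eqref{eq:region1} together with the defining inequalities of $\mathcal{R}_1$ gives an explicit lower bound $\partial_\xi\Phi(\xi_0,\mu)\ge c|\xi_0|^2$ --- one may take $c=\tfrac18$. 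The mean value theorem then yields, for $\xi\in B(\xi_0,\varepsilon)$,
\[ |\partial_\xi\Phi(\xi,\mu)-\partial_\xi\Phi(\xi_0,\mu)|\le|\xi_0|\,\varepsilon=10^{-6}|\xi_0|^2\le\tfrac12\,\partial_\xi\Phi(\xi_0,\mu), \]
so $\partial_\xi\Phi(\xi,\mu)\ge\tfrac12\,\partial_\xi\Phi(\xi_0,\mu)>0$ throughout $B(\xi_0,\varepsilon)$. Therefore $\Phi(\cdot,\mu)$ is strictly increasing and $C^1$ on $B(\xi_0,\varepsilon)$, hence a $C^1$ diffeomorphism onto the open interval $\Phi(B(\xi_0,\varepsilon))$.

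It remains to check that this image contains $B\bigl(\Phi(\xi_0,\mu),\tfrac{\partial_\xi\Phi(\xi_0,\mu)}{2}\varepsilon\bigr)$: by the fundamental theorem of calculus and the bound just obtained, $\Phi(\xi_0+\varepsilon,\mu)-\Phi(\xi_0,\mu)=\int_{\xi_0}^{\xi_0+\varepsilon}\partial_\xi\Phi(s,\mu)\,ds\ge\tfrac12\partial_\xi\Phi(\xi_0,\mu)\,\varepsilon$, and symmetrically for $\Phi(\xi_0,\mu)-\Phi(\xi_0-\varepsilon,\mu)$; since $\Phi(\cdot,\mu)$ is continuous and monotone, $\Phi(B(\xi_0,\varepsilon))$ is the open interval with endpoints $\Phi(\xi_0\mp\varepsilon,\mu)$, which therefore contains the stated ball, and restricting $\Phi$ to the preimage of that ball gives the asserted diffeomorphism. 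The only delicate point --- more bookkeeping than obstacle --- is to verify that $10^{-6}$ genuinely dominates the ratio of the supremum of $|\partial_\xi^2\Phi|/|\xi_0|$ over $B(\xi_0,\varepsilon)$ to the infimum of $\partial_\xi\Phi(\xi_0,\cdot)/|\xi_0|^2$ over $\mathcal{R}_1$, and, crucially, that all the constants above depend only on $|\xi_0|$ through the defining inequalities of $\mathcal{R}_1$ and never on $\mu$ or $\tau$ separately --- which is exactly what makes the radius $\varepsilon=10^{-6}|\xi_0|$ admissible and uniform in the frozen parameters.
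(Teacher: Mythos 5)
Your proposal is correct and follows essentially the same route as the paper: monotonicity of $\Phi(\cdot,\mu)$ on the two rays of the $\mathcal{R}_1$-slice for the count of roots, and then the quantitative inverse function theorem (the paper's \Cref{thm:inversefunction} with condition \eqref{eq:chooseepsilon}, which is exactly the bound $\partial_\xi\Phi(\xi,\mu)\geq\tfrac12\partial_\xi\Phi(\xi_0,\mu)$ you establish via the second-derivative and mean value theorem estimates). The only difference is that you supply the explicit verification that $\varepsilon=10^{-6}|\xi_0|$ works, which the paper asserts without computation.
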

	\begin{proof}
		The fact that there are at most two solutions in $\mathcal{R}_1$ follows from the fact that $\pa_{\xi} \Phi (\cdot,\mu)$ is positive and $\Phi(\cdot, \mu)$ is a third order polynomial.
		
		The existence of such an $\varepsilon>0$ (but perhaps dependent on $\mu,\tau$) follows from \Cref{thm:inversefunction}. According to this result, $\varepsilon$ can be chosen so that 
		\begin{equation}\label{eq:chooseepsilon} \Big |\frac{\pa_{\xi}\Phi (\xi,\mu)}{\pa_{\xi}\Phi (\xi_0,\mu)} \Big | \geq \frac{1}{2}\quad  \mbox{for all}\ \xi\in B(\xi_0,\varepsilon).
		\end{equation}
		 By taking $\varepsilon=|\xi_0|\, 10^{-6}$ one can guarantee that condition \eqref{eq:chooseepsilon} is satisfied.
	\end{proof}
	
	For each solution $\xi_0(\tau,\mu)$ we consider a function $\varphi\in C_c^{\infty}(\R)$ supported in $B(\xi_0,\varepsilon)$, with $0\leq \varphi\leq 1$ and such that $\varphi=1$ in $B(\xi_0,\varepsilon/2)$. If $(\xi,\mu)\in \mathcal{R}_1$, then $\xi\in (-\infty,-a_{\mu}) \cup (a_{\mu},+\infty)$ where 
	\[ a_{\mu}=\max \{ 100,\, 200\,|\mu|\},\] 
	which gives rise to two connected components. Furthermore, there can be a maximum of one solution to \eqref{eq:levelcurves} in $(\frac{a_{\mu}}{2},+\infty)$ and another in $(-\infty, -\frac{a_{\mu}}{2})$. This is due to the fact that $1\lesssim \pa_{\xi} \Phi (\xi,\mu)$ there, see \eqref{eq:region1}.
	
	\begin{lem}\label{thm:zeroes2}
		For fixed $\mu,\tau$, consider the equation
		\begin{equation}\label{eq:levelcurves}
			\tau - \Phi (\xi,\mu)= 0\quad \mbox{for}\ \xi\in\R.
		\end{equation}
		Suppose that $\xi_0$ is a solution to \eqref{eq:levelcurves} in $(\frac{a_{\mu}}{2},+\infty)$, then for all $\xi\in(a_{\mu},+\infty)$ we have that
		\[ |\tau - \Phi (\xi,\mu)|\geq c \, |\xi|^2 \, |\xi-\xi_0|,\] 
		where $c$ is independent of $\xi,\tau,$ and $\mu$. If there is no solution to \eqref{eq:levelcurves} in $(\frac{a_{\mu}}{2},+\infty)$, then for all $\xi\in(a_{\mu},+\infty)$ we have
		\[ |\tau - \Phi (\xi,\mu)|\geq c \, |\xi|^3\]
		for some independent constant $c$. Analogous statements hold regarding $(-\infty,-a_{\mu})$.
	\end{lem}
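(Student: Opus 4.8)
The plan is to reduce both estimates to the strict monotonicity of $\xi\mapsto\Phi(\xi,\mu)$ on the half-lines $\xi>\tfrac{a_\mu}{2}$ and $\xi<-\tfrac{a_\mu}{2}$. First I would record the quantitative derivative bound behind \eqref{eq:region1}: since $|\mu|\le\tfrac{a_\mu}{200}$, we have $\mu^2\le\xi^2/10^4$ whenever $|\xi|>\tfrac{a_\mu}{2}$, so that in
\[ \pa_\xi\Phi(\xi,\mu)=\tfrac{3}{16}\xi^2-\tfrac38\mu^2-\tfrac14\xi+\tfrac12 \]
the quadratic term dominates and there is a universal $c_0>0$ with $\pa_\xi\Phi(\xi,\mu)\ge c_0\xi^2$ for all $|\xi|>\tfrac{a_\mu}{2}$ (on the negative branch the term $-\tfrac14\xi$ only helps). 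In particular $\Phi(\cdot,\mu)$ is strictly increasing on $(\tfrac{a_\mu}{2},\infty)$, which reproves the ``at most one root'' statement recalled before the lemma; writing $g(\xi):=\tau-\Phi(\xi,\mu)$ we have $g'=-\pa_\xi\Phi(\cdot,\mu)<0$ there and $g(\xi)\to-\infty$ as $\xi\to+\infty$.

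For the first assertion I would assume a root $\xi_0\in(\tfrac{a_\mu}{2},\infty)$ exists and fix $\xi\in(a_\mu,\infty)$. Since $\xi,\xi_0>\tfrac{a_\mu}{2}$, the whole interval between them lies where $\pa_\xi\Phi(s,\mu)\ge c_0 s^2>0$, so by the fundamental theorem of calculus and $g(\xi_0)=0$, abbreviating $M=\max(\xi,\xi_0)$ and $m=\min(\xi,\xi_0)$,
\[ |g(\xi)|=\Bigl|\int_{\xi_0}^{\xi}\pa_\xi\Phi(s,\mu)\,ds\Bigr|=\int_m^M\pa_\xi\Phi(s,\mu)\,ds\ge c_0\int_m^M s^2\,ds=\tfrac{c_0}{3}\bigl(M^3-m^3\bigr). \]
I would then factor $M^3-m^3=(M-m)(M^2+Mm+m^2)\ge|\xi-\xi_0|\,\xi^2$ --- valid since one of $M,m$ equals $\xi$ and all three are positive --- obtaining $|g(\xi)|\ge\tfrac{c_0}{3}|\xi|^2|\xi-\xi_0|$.

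For the second assertion, suppose there is no root in $(\tfrac{a_\mu}{2},\infty)$. Then $g$ is continuous, nonvanishing, and $\to-\infty$ there, hence $g<0$ throughout, and monotonicity of $\Phi(\cdot,\mu)$ forces $\tau\le\Phi(\tfrac{a_\mu}{2},\mu)$. For $\xi\in(a_\mu,\infty)$ this gives
\[ |g(\xi)|=\Phi(\xi,\mu)-\tau\ge\Phi(\xi,\mu)-\Phi\bigl(\tfrac{a_\mu}{2},\mu\bigr)=\int_{a_\mu/2}^{\xi}\pa_\xi\Phi(s,\mu)\,ds\ge\tfrac{c_0}{3}\Bigl(\xi^3-\bigl(\tfrac{a_\mu}{2}\bigr)^3\Bigr)\ge\tfrac{7c_0}{24}\,|\xi|^3, \]
using $\xi>a_\mu$ so that $(\tfrac{a_\mu}{2})^3=\tfrac18 a_\mu^3<\tfrac18\xi^3$.

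Finally, the statements on $(-\infty,-a_\mu)$ follow by running the identical argument on the increasing branch $(-\infty,-\tfrac{a_\mu}{2})$, now with $\Phi(\xi,\mu)\to-\infty$ as $\xi\to-\infty$ (so $g\to+\infty$, and in the no-root case $g>0$ and $\tau\ge\Phi(-\tfrac{a_\mu}{2},\mu)$), the integrals running from $\xi$ up to $\xi_0$ or to $-\tfrac{a_\mu}{2}$ and the difference-of-cubes factorization unchanged. The only step needing care is the derivative bound $\pa_\xi\Phi(\xi,\mu)\ge c_0\xi^2$ slightly below the threshold $|\xi|>100$ of $\mathcal{R}_1$, but this is immediate from the computation already behind \eqref{eq:region1}; I do not expect any real obstacle here, the lemma being just a quantitative form of the monotonicity of $\Phi(\cdot,\mu)$.
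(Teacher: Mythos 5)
Your proof is correct and follows essentially the same route as the paper: the lower bound $\pa_\xi\Phi(\xi,\mu)\gtrsim\xi^2$ on the relevant half-lines plus the fundamental theorem of calculus, integrating from $\tfrac{a_\mu}{2}$ (no-root case) or from $\xi_0$ (root case). The paper only sketches the root case (``a similar argument''), and your difference-of-cubes factorization is exactly the detail needed to make it quantitative, so there is nothing to fix.
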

	\begin{proof}
		Suppose for example that $\xi\in (a_{\mu},+\infty)$. If there are no solutions to \eqref{eq:levelcurves} in $(\frac{a_{\mu}}{2},+\infty)$, then the minimum of $\Phi$ in $(\frac{a_{\mu}}{2},+\infty)$ is achieved precisely at $\frac{a_{\mu}}{2}$. Note also that $\Phi(\xi,\mu)-\tau$ must then be positive in $(\frac{a_{\mu}}{2},+\infty)$. Then by \eqref{eq:region1},
		\begin{align*}
			\Phi(\xi,\mu)-\tau &\geq \Phi(\xi,\mu)- \Phi(\frac{a_{\mu}}{2},\mu)=\int_{a_{\mu}/2}^{\xi} \pa_{\xi} \Phi(z, \mu)\, dz\\
			& \gtrsim\int_{a_{\mu}/2}^{\xi} z^2\, dz= \xi^3- \left(\frac{a_{\mu}}{2}\right)^3 \gtrsim \xi^3.
		\end{align*}
		The last inequality follows from the fact that $\xi>a_{\mu}$. A similar argument proves the corresponding lower bound if there is a solution  to \eqref{eq:levelcurves} in $(\frac{a_{\mu}}{2},+\infty)$.
	\end{proof}
	
	We are finally in a position to prove our main result.
	
	\begin{thm}\label{thm:3}
		For $f\in C_c^{\infty}(\R^3)$ and all $(t,x,y)\in\R^3$, we have that 
		\begin{multline*}
			\lim_{\varepsilon\rightarrow 0}
			\int_{\varepsilon<|\tau - \Phi(\xi,\mu)|<1/\varepsilon} e^{ix\xi}\, \frac{\xi^2}{\tau - \Phi (\xi,\mu)}\,\widehat{Q_jf}(\tau,\xi,\mu)\,\chi_1 (\xi,\mu)\, d\xi\\
			=\int_{\R} K_1 (\tau, x-\tilde{x},\mu)\, \widehat{Q_jf}^{(t,y)}(\tau,\tilde{x},\mu) \, d\tilde{x},
		\end{multline*}
		where $\widehat{Q_jf}^{(t,y)}$ denotes the Fourier transform in the variables $t,y$ only, and 
		\[ K_1 (\tau,x,\mu)=\lim_{\varepsilon\rightarrow 0}\int_{\varepsilon<|\tau - \Phi(\xi,\mu)|<1/\varepsilon} e^{ix\xi}\, \frac{\xi^2}{\tau - \Phi (\xi,\mu)}\,\chi_1 (\xi,\mu)\, d\xi\]
		where the limit exists for every $(\tau,x,\mu)$ and $K_1\in L^{\infty}_{\tau, x,\mu}(\R^3)$.
		
		Similarly, we have
		\begin{multline*}
			\lim_{\varepsilon\rightarrow 0}
			\int_{\varepsilon<|\tau - \Phi(\xi,\mu)|<1/\varepsilon} e^{iy\mu}\, \frac{\xi\,\mu}{\tau - \Phi (\xi,\mu)}\,\widehat{Q_jf}(\tau,\xi,\mu)\,\chi_2 (\xi,\mu)\, d\mu\\
			=\int_{\R} K_2 (\tau, \xi,y-\tilde{y})\, \widehat{Q_jf}^{(t,x)}(\tau,\xi,\tilde{y}) \, d\tilde{y},
		\end{multline*}
		where
		\[ K_2 (\tau,\xi,y)=\lim_{\varepsilon\rightarrow 0}\int_{\varepsilon<|\tau - \Phi(\xi,\mu)|<1/\varepsilon} e^{iy\mu}\, \frac{\xi\,\mu}{\tau - \Phi (\xi,\mu)}\,\chi_2 (\xi,\mu)\, d\mu\]
		and the limit exists for every $(\tau,\xi,y)$ and $K_2 \in L^{\infty}_{\tau, \xi,y}(\R^3)$.
	\end{thm}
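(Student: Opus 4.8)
The plan is to treat the two displays symmetrically and focus on the $\chi_1$ case, since the $\chi_2$ case is genuinely easier (the phase $\Phi(\xi,\cdot)$ is quadratic in $\mu$, so level sets and the relevant Jacobian are more transparent). The first step is to reduce the claimed identity to a statement about a single-variable singular integral: by \Cref{thm:2}, we already know the full limit commutes with the outer $d\mu\,d\tau$ integration, so it suffices to understand the inner $\xi$-integral for \emph{fixed} $\tau,\mu$. For such fixed parameters, performing the change of variables $\tilde\xi=\Phi(\xi,\mu)=\Phi_\mu(\xi)$ as in the proof of \Cref{thm:2} — which is a $C^1$ diffeomorphism on each connected component of $\pi(\mathcal{R}_1)$ because $|\pa_\xi\Phi|\gtrsim |\xi|^2$ there by \eqref{eq:region1} — converts the truncated integral $\int_{\varepsilon<|\tau-\Phi(\xi,\mu)|<1/\varepsilon} e^{ix\xi}\,\xi^2(\tau-\Phi(\xi,\mu))^{-1}\chi_1\,d\xi$ into a truncated Hilbert transform $H_\varepsilon$ applied to an $x$-dependent amplitude $G_{\tau,\mu,x}(\tilde\xi)=e^{ix\Phi_\mu^{-1}(\tilde\xi)}\,\Phi_\mu^{-1}(\tilde\xi)^2\,|\det\nabla\Phi_\mu^{-1}(\tilde\xi)|\,\chi_1$. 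Crucially, the $\xi^2$ weight cancels exactly against $|\det\nabla\Phi_\mu^{-1}|\sim |\xi|^{-2}$, so $G$ stays bounded.

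Once we are in this form, pointwise convergence of $H_\varepsilon G \to HG$ as $\varepsilon\to 0$ follows from the standard theory of the Hilbert transform, provided $G$ has enough regularity — which it does, since $f\in C_c^\infty$ makes $\widehat{Q_jf}^{(t,y)}(\tau,\cdot,\mu)$ smooth and compactly supported in $\tilde x$, and the change of variables is $C^1$. Unwinding the convolution structure of the Hilbert transform in the original $\xi$-variable, and recalling that convolution in the Hilbert transform commutes with the Fourier transform only along the $x$-direction, produces exactly the kernel representation with $K_1(\tau,x,\mu)=\lim_{\varepsilon\to 0}\int_{\varepsilon<|\tau-\Phi|<1/\varepsilon}e^{ix\xi}\,\xi^2(\tau-\Phi)^{-1}\chi_1\,d\xi$ convolved against $\widehat{Q_jf}^{(t,y)}(\tau,\cdot,\mu)$ in $\tilde x$. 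The existence of this limit for \emph{every} $(\tau,x,\mu)$ — not merely a.e. — will come from splitting the $\xi$-integral near and away from the (at most two, by \Cref{thm:zeroes}) zeros of $\tau-\Phi(\cdot,\mu)$: away from the zeros we use the lower bounds of \Cref{thm:zeroes2} to get absolute convergence directly, and near each zero $\xi_0$ we localize with the bump $\varphi$ from \Cref{thm:zeroes}, change variables on $B(\xi_0,|\xi_0|10^{-6})$ to land in a genuine truncated Hilbert transform of a $C^1$ function, and invoke pointwise a.e.\ convergence plus the explicit structure to upgrade to everywhere convergence.

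The main obstacle — and the reason this is more delicate than the corresponding KdV argument in \cite{KPV} — is proving the \emph{uniform} $L^\infty$ bound $\|K_1\|_{L^\infty_{\tau,x,\mu}}<\infty$ with a constant independent of $\tau,\mu$ (and of the scale $j$). The danger is that the implicit constants in \eqref{eq:chooseepsilon}, in the size of the localizing ball $B(\xi_0,\varepsilon)$, and in the Jacobian bounds could degenerate as $|\mu|\to\infty$ or as $\xi_0$ approaches the boundary $a_\mu=\max\{100,200|\mu|\}$ of $\mathcal{R}_1$. The key to controlling this is precisely the scale-invariant choice $\varepsilon=|\xi_0|\,10^{-6}$ in \Cref{thm:zeroes}, together with the homogeneous lower bounds $|\pa_\xi\Phi|\gtrsim|\xi|^2$ and $|\tau-\Phi|\gtrsim|\xi|^2|\xi-\xi_0|$ of \Cref{thm:zeroes2}, which are all uniform in $\mu$: after the change of variables the bump function $\chi_a$ gets transported to a function whose $C^1$ norm on the relevant scale is bounded by an absolute constant, so the standard bound $\|H^\ast g\|_\infty\lesssim \|g\|_{C^1}$-type estimate for truncated Hilbert transforms of smooth bumps applies uniformly. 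One then handles the far-from-zeros piece by noting it is an honest absolutely convergent oscillatory integral dominated by $\int |\xi|^{-1}\,\chi_1\,d\xi$ on each dyadic range, which sums to a constant. Assembling these uniform pieces and repeating verbatim (with $\mu\leftrightarrow\xi$ and the quadratic-in-$\mu$ simplification) for $K_2$ completes the proof.
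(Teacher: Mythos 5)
Your overall architecture is sound and, for the piece near a zero $\xi_0$ of $\tau-\Phi(\cdot,\mu)$, essentially reproduces the paper's Step 1: reduce to fixed $(\tau,\mu)$ via \Cref{thm:2}, localize at the scale $\varepsilon=10^{-6}|\xi_0|$ from \Cref{thm:zeroes}, rescale, and use a scale-invariant bound for the (truncated) Hilbert transform of a smooth bump; that part can indeed be made uniform in $\tau,\mu$. The genuine gap is in the region far from the zero, in particular the tail $\xi>2\xi_0$ (and likewise $|\xi|\to\infty$ when there is no zero at all). You claim this piece is ``an honest absolutely convergent oscillatory integral dominated by $\int|\xi|^{-1}\chi_1\,d\xi$ on each dyadic range, which sums to a constant.'' That is false: by \Cref{thm:zeroes2} the integrand there is only $O(|\xi|^{-1})$ --- indeed $\xi^2/(\tau-\Phi(\xi,\mu))\to -16/\xi$ as $\xi\to\infty$ since $\Phi\sim\xi^3/16$ --- each dyadic block contributes $\sim \log 2$, and there are infinitely many blocks because $\chi_1$ is \emph{not} compactly supported in $\xi$ (the kernel $K_1$ contains no factor of $m_j$ or of $\widehat f$), so the sum diverges logarithmically. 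Hence absolute convergence fails, the existence of the limit defining $K_1$ does not follow from your argument, and, more importantly, the uniform bound $K_1\in L^\infty_{\tau,x,\mu}$ --- the core assertion of the theorem --- is not established. The paper devotes Steps 2--3 of its proof precisely to this: one must exploit the oscillation $e^{ix\xi}$, splitting the amplitude as in \eqref{eq:toporder2} so that everything except the exact top order $16\,\varphi_3\,\chi_1/\xi$ is absolutely integrable, and then evaluating the remaining oscillatory integral by comparison with $\int e^{ix\xi}\,d\xi/\xi=-i\pi\,\mathrm{sign}(x)$ up to a smooth compactly supported correction, rescaling by $\mu$ or by $\xi_0$ according to whether $100|\mu|>\xi_0$ or not, which is what makes the bound uniform in $\tau,\mu$ and $x$. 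Some version of this cancellation argument is indispensable.

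A secondary issue: your appeal to ``standard pointwise convergence $H_\varepsilon G\to HG$'' is not justified for the kernel itself, since after the change of variables your amplitude $G$ is bounded but non-decaying (it tends in modulus to $16/3$ at infinity), so it lies in no $L^p$ with $p<\infty$ and the usual truncated-Hilbert-transform theory does not apply; the symmetric truncation $|\tau-\tilde\xi|<1/\varepsilon$ together with the oscillation of $e^{ix\Phi_\mu^{-1}(\tilde\xi)}$ must be used explicitly, which again is exactly the missing tail analysis. The smoothness and compact support of $\widehat{Q_jf}^{(t,y)}$ only help on the function side of the convolution identity (already handled by \Cref{thm:1} and \Cref{thm:2}), not in defining and bounding $K_1$.
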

	\begin{proof}[Proof for the first kernel]
		Fix $\tau$ and $\mu$, and let us write 
		\[ K_1(\tau, x, \mu)=\lim_{\delta\rightarrow 0}\, \int_{\delta<|\tau- \Phi(\xi,\mu)|<\delta^{-1}} e^{ix\xi}\, \frac{\xi^2}{\tau- \Phi(\xi,\mu)} \, \chi_1 (\xi,\mu)\, d\xi.\]
		As explained in \Cref{thm:zeroes2}, there are different scenarios depending on the number of solutions to $\tau- \Phi(\xi,\mu) = 0$ in $\mathcal{R}_1$. If there are none, the proof is simpler and we omit it. If there are two solutions, \Cref{thm:zeroes2} shows that one is positive and one is negative, so we can reduce it to the case of a single zero after replacing $\chi_1$ by two cut-offs $\chi_1^{+}$ and $\chi_1^{-}$ that equal $\chi_1$ when $\pm\xi>0$ and zero otherwise. Therefore suppose there is a unique solution $\xi_0=\xi_0 (\tau,\mu)>0$ and consider a cut-off $\varphi_0 (\xi)$ as given by \Cref{thm:zeroes} which is supported in a ball $B(\xi_0,\varepsilon)$ with $\varepsilon = 10^{-6}\,|\xi_0|$.
		
		We define:
		\begin{align*}
			K_1 (\tau,x,\mu) & = \lim_{\delta\rightarrow 0}\, \int_{\delta<|\tau- \Phi(\xi,\mu)|<\delta^{-1}} e^{ix\xi}\, \frac{\xi^2}{\tau- \Phi(\xi,\mu)} \, \varphi_0(\xi)\, \chi_1 (\xi,\mu)\, d\xi\\
			& + \lim_{\delta\rightarrow 0}\, \int_{\delta<|\tau- \Phi(\xi,\mu)|<\delta^{-1}} e^{ix\xi}\, \frac{\xi^2}{\tau- \Phi(\xi,\mu)} \, [1-\varphi_0(\xi)]\, \chi_1 (\xi,\mu)\, d\xi = I+ I\! I.
		\end{align*}
		
		{\bf Step 1.} Let us focus on $I$ first. For $\xi\in B(\xi_0, \varepsilon)$, \Cref{thm:zeroes2} gives some independent constant $c>0$ such that
		\begin{equation}\label{eq:nearzero}
			|\Phi (\xi,\mu)-\tau|=|\Phi (\xi,\mu)-\Phi (\xi_0,\mu)|\geq c \, |\xi-\xi_0|\, |\xi|^2 .
		\end{equation} 
		Then we change variables $\xi=\xi_0 z$ for $z\in B(1, 10^{-6})$
		\begin{align*}
		 I & = \int_{\R} \frac{e^{ix\xi}}{\xi-\xi_0}\,\frac{\xi^2 \, (\xi-\xi_0)}{\Phi(\xi_0,\mu)- \Phi(\xi,\mu)} \, \varphi_0(\xi)\, \chi_1 (\xi,\mu)\, d\xi\\
		 & = \int_{\R} \frac{e^{ix\xi_0 z}}{z-1}\, \frac{\xi_0^3\, z^2 (z-1)}{\Phi(\xi_0,\mu)- \Phi(\xi_0\,z,\mu)} \, \varphi_0(z\xi_0)\, \chi_1 (z\xi_0,\mu)\, dz \\
		 & =  i\pi\, e^{-ix\xi_0}\, \mbox{sign}(x) \ast_x \widehat{g}(x,\xi_0,\mu).
		 \end{align*}
		 where
		 \[ g(z,\xi_0,\mu) := \frac{\xi_0^3\, z^2 (z-1)}{\Phi(\xi_0,\mu)- \Phi(\xi_0\,z,\mu)} \, \varphi_0(z\xi_0)\, \chi_1 (z\xi_0,\mu).\]
		 
		By the Young convolution inequality, and integration by parts:
		\begin{equation*}
			\sup_{x\in\R} | \, I \, |  \lesssim \norm{ \widehat{g}}_{L^1_x}  = \norm{ \widehat{g}}_{L^1_x(|x|\leq 1)} + \norm{ \widehat{g}}_{L^1_x(|x|> 1)}\lesssim \norm{g}_{L^1_z}+ \norm{g''}_{L^1_z}.
		\end{equation*}
		We just need to guarantee that the last two norms admit a uniform bound in $\xi_0,\mu$. We do the first one as an example. By \eqref{eq:nearzero},
\begin{align*}
\norm{g}_{L^1_{z}} & \lesssim \int_{B(1,10^{-6})}  \frac{\xi_0^3\, z^2\, |z-1|}{|\xi_0\, z|^2 |\xi_0 z- \xi_0|} \, \varphi_0(z\xi_0)\, \chi_1 (z\xi_0,\mu) \, dz \lesssim  \int_{B(1,10^{-6})}  \, dz \lesssim 1.
\end{align*}
		
		{\bf Step 2.} Now we study $I\! I$. We subdivide this into three parts: $100<\xi<\frac{1}{2}\xi_0$, $\frac{1}{2}\xi_0<\xi<2\xi_0$ and $2\xi_0<\xi$. Consider a function $\varphi_j \in C_c^{\infty}$, $0\leq \varphi_j\leq 1$ supported at each of these regions for $j=1,2,3$, and such that they add up to one.
		
		We first consider the region where $100<\xi<\frac{1}{2}\xi_0$ (if there is no such region, then we have an upper bound for $\xi_0$ and the argument simplifies). Using \eqref{eq:nearzero} we have that
		\begin{multline*}
		 \Big | \int_{100<\xi<\frac{1}{2}\xi_0}\, e^{ix\xi}\, \frac{\xi^2}{\tau- \Phi(\xi,\mu)} \, [1-\varphi_0(\xi)]\, \chi_1 (\xi,\mu)\, \varphi_1(\xi)\, d\xi\Big | \\
	 \lesssim \int_{100<\xi<\frac{1}{2}\xi_0} \frac{1}{|\xi-\xi_0|} \, [1-\varphi_0(\xi)]\, \chi_1 (\xi,\mu)\, d\xi \lesssim \int_{100<\xi<\frac{1}{2}\xi_0}\frac{1}{\xi_0} d\xi \lesssim 1.
		\end{multline*}
		Now consider the region $\frac{1}{2}\xi_0 < \xi< 2\xi_0$. We use \eqref{eq:nearzero} once again:
		\begin{multline*}
		 \Big | \int_{\frac{1}{2}\xi_0 < \xi< 2\xi_0}\, e^{ix\xi}\, \frac{\xi^2}{\tau- \Phi(\xi,\mu)} \, [1-\varphi_0(\xi)]\, \chi_1 (\xi,\mu)\, \varphi_2(\xi)\, d\xi\Big | \\
		  \lesssim \int_{\frac{1}{2}\xi_0 < \xi< 2\xi_0} \frac{1}{|\xi-\xi_0|} \, [1-\varphi_0(\xi)]\, \chi_1 (\xi,\mu)\, d\xi \lesssim \int_{\frac{1}{2} < z< 2} \frac{1}{|z-1|} \,  | 1-\varphi_0(\xi_0\,z)|\, dz\\
		 \lesssim \int_{1/2}^{1-10^{-6}} \frac{1}{1-z}\, dz +\int_{1+10^{-6}}^{2} \frac{1}{z-1}\, dz  \lesssim 1.
		\end{multline*}
		
		Finally, we study $I\! I$ when $\xi>2\xi_0$, where we need to exploit cancellation to improve on the $|\xi|^{-1}$ decay. To do that, we separate the phase into top and lower order terms:
		\[ \Phi(\xi,\mu)= \frac{1}{16} \xi^3 - \frac{3}{8}\, \mu^2 \xi + \phi (\xi,\mu).\]
We write:
		\begin{equation}\label{eq:toporder2}
			\frac{\xi^2}{\Phi (\xi,\mu)-\Phi (\xi_0,\mu)} =\frac{1}{\xi\,\left(\frac{1}{16}-\frac{3\,\mu^2}{8\,\xi^2}\right)}+ \frac{\Phi(\xi_0,\mu)-\phi (\xi,\mu)}{\xi\,\left(\frac{1}{16}-\frac{3\,\mu^2}{8\,\xi^2}\right)\, (\Phi (\xi,\mu)-\Phi (\xi_0,\mu))}.
		\end{equation}
		Above, we have singled out the top order, given that the second term can be integrated in absolute value and produces a uniformly bounded contribution. Indeed, we use \eqref{eq:nearzero} as follows:
		\begin{align*}
			\int_{\xi>2\xi_0} \frac{|\Phi(\xi_0,\mu)-\phi (\xi,\mu)|}{|\xi|\,\left(\frac{1}{16}-\frac{3\,\mu^2}{8\,\xi^2}\right)\, |\Phi (\xi,\mu)-\Phi (\xi_0,\mu)|}\, [1-\varphi_0(\xi)]\, \chi_1 (\xi,\mu)\, d\xi & \lesssim \int_{\xi>2\xi_0} \frac{|\xi_0|^3 + |\xi|^2}{|\xi|^3 \, |\xi-\xi_0|}\, d\xi\\
			& \lesssim \int_{\xi>2\xi_0} \frac{|\xi_0|^3 + |\xi|^2}{|\xi|^4}\, d\xi \lesssim 1.
		\end{align*}
		Therefore, we need only control the top order in \eqref{eq:toporder2}. We rewrite it as:
\[ \frac{\varphi_3 (\xi)\, \chi_1 (\xi,\mu)}{\xi\,\left(\frac{1}{16}-\frac{3\,\mu^2}{8\,\xi^2}\right)}= \frac{16\, \varphi_3 (\xi)\, \chi_1 (\xi,\mu)}{\xi} + 16\,\varphi_3 (\xi)\, \chi_1 (\xi,\mu)\, \frac{6\mu^2 }{\xi\, (\xi^2-6\,\mu^2)}. \]
The latter is easy to control after the change of variables $\xi=\mu z$,
\[ \int_{\R} e^{ix\xi} \varphi_3 (\xi)\, \chi_1 (\xi,\mu)\, \frac{\mu^2 }{\xi\, (\xi^2-6\,\mu^2)}\, d\xi = \int_{\R} e^{ix\mu z} \varphi_3 (\mu z)\, \chi_1 (\mu z,\mu)\, \frac{1}{z\, (z^2-6)}\, dz.\]
Note that the integrand is supported in the region $\{ z> \max\{ 200, \frac{2\xi_0}{|\mu|}\} \}$ and is absolutely integrable.

{\bf Step 3.} Finally, we estimate the term
\[ \int_{\R} e^{ix\xi} \, \frac{\varphi_3 (\xi)\, \chi_1 (\xi,\mu)}{\xi} \, d\xi.\]
We want to show that this is uniformly bounded in $\mu$ and $\xi_0$ (since $\varphi_3$ depends on it). The idea is to rescale the variable $\xi$, but there are two cases to consider:
\begin{enumerate}
\item $100\, |\mu| > \xi_0(\mu)$, and 
\item $100\, |\mu| \leq  \xi_0(\mu)$.
\end{enumerate}

In case $(1)$, we do the change of variables $\xi=\mu z$:
\[ \int_{\R} e^{ix\xi} \, \frac{\varphi_3 (\xi)\, \chi_1 (\xi,\mu)}{\xi} \, d\xi =\int_{\R} e^{ix\mu z} \, \frac{\varphi_3 (\mu z)\, \chi_1 (\mu z,\mu)}{z} \, dz.\]
Now note that the integrand is supported in the region given by $z>\max\{ 2\frac{|\xi_0 |}{\mu}, 200 \}=200$. Therefore, 
\begin{align*}
\int_{\R} e^{ix\mu z} \, \frac{\varphi_3 (\mu z)\, \chi_1 (\mu z,\mu)}{z} \, dz & = \int_{\R} e^{ix\mu z} \frac{dz}{z} - \int_{\R} e^{ix\mu z} \frac{1- \varphi_3 (\mu z)\, \chi_1 (\mu z,\mu)}{z}\\
& = -i\pi\, e^{ix\mu}\,\mbox{sign}(x) + i \pi \, [e^{i\mu\,\cdot}\, \mbox{sign}(\cdot) \ast [1- \varphi_3 (\mu \cdot )\, \chi_1 (\mu \cdot,\mu)]^{\wedge}](x)
\end{align*}
It is easy to show that the last term is bounded thanks to the fact that $1- \varphi_3 (\mu \cdot )\, \chi_1 (\mu \cdot,\mu)$ is a smooth function supported in $|z|\leq 200$.

In case $(2)$, we do the change of variables $\xi=\xi_0 z$ and run a similar argument to the above to obtain a uniformly bounded contribution.
	\end{proof}
	
	Thanks to \Cref{thm:3}, the following is straight-forward:
	
	\begin{thm}[Smoothing effect]\label{thm:smoothingeffect}
		For $f\in C_c^{\infty}(\R^3)$, we have that
		\begin{align*}
			\norm{\pa_x^2 \widetilde{W_1}(t) Q_j f}_{L^{\infty}_{x} L^2_{y,t}} & \lesssim \norm{Q_j f}_{L^1_{x} L^2_{y,t}},\\
			\norm{\pa_x\, \pa_y\, \widetilde{W_2}(t)Q_j f}_{L^{\infty}_{y} L^2_{x,t}} & \lesssim \norm{Q_j f}_{L^1_{y} L^2_{x,t}}.
		\end{align*}
	\end{thm}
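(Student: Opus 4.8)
The plan is to reduce both estimates to the convolution structure established in \Cref{thm:3}, combined with Plancherel's theorem and Minkowski's integral inequality; essentially all of the analytic work has already been done in \Cref{thm:1}, \Cref{thm:2} and \Cref{thm:3}, so what remains is bookkeeping. First I would differentiate under the integral sign in the $\chi_1$--localized version of \eqref{eq:fullFT}: applying $\pa_x^2$ produces a factor $-\xi^2$, so that
\[\pa_x^2\widetilde{W_1}(t)Q_jf(x,y) = -\lim_{\varepsilon\rightarrow 0}\int_{\varepsilon<|\tau - \Phi(\xi,\mu)|<1/\varepsilon} e^{ix\xi+iy\mu+it\tau}\,\frac{\xi^2}{\tau - \Phi (\xi,\mu)}\,\widehat{Q_jf}(\tau,\xi,\mu)\,\chi_1(\xi,\mu)\,d\xi\,d\mu\,d\tau,\]
which is exactly the left-hand side of \eqref{eq:kernel1}. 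By \Cref{thm:2} the limit may be moved inside the $d\mu\,d\tau$ integration, and by \Cref{thm:3} the resulting inner $\xi$-integral is a convolution in $x$ against $K_1$. In other words, for each fixed $(\tau,\mu)$,
\[\mathcal{F}_{t,y}\bigl[\pa_x^2\widetilde{W_1}(t)Q_jf\bigr](\tau,x,\mu) = -\int_{\R}K_1(\tau,x-\tilde x,\mu)\,\widehat{Q_jf}^{(t,y)}(\tau,\tilde x,\mu)\,d\tilde x,\]
where $\mathcal{F}_{t,y}$ is the Fourier transform in $(t,y)$, and $\norm{K_1}_{L^{\infty}_{\tau,x,\mu}}\lesssim 1$ (in particular independently of $j$).

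Next I would fix $x\in\R$ and estimate the $L^2_{y,t}$ norm. By Plancherel's theorem in the variables $y\mapsto\mu$ and $t\mapsto\tau$,
\[\norm{\pa_x^2\widetilde{W_1}(t)Q_jf(x,\cdot,\cdot)}_{L^2_{y,t}} = \left(\int_{\R^2}\Bigl|\int_{\R}K_1(\tau,x-\tilde x,\mu)\,\widehat{Q_jf}^{(t,y)}(\tau,\tilde x,\mu)\,d\tilde x\Bigr|^2\,d\mu\,d\tau\right)^{1/2}.\]
Applying Minkowski's integral inequality to pull the $d\tilde x$ integral outside the $L^2_{\mu,\tau}$ norm, bounding $|K_1|\leq\norm{K_1}_{L^{\infty}}\lesssim 1$ pointwise, and using Plancherel once more in $(y,t)$ for the slice at $\tilde x$ fixed, one obtains
\[\norm{\pa_x^2\widetilde{W_1}(t)Q_jf(x,\cdot,\cdot)}_{L^2_{y,t}} \lesssim \int_{\R}\norm{Q_jf(\tilde x,\cdot,\cdot)}_{L^2_{y,t}}\,d\tilde x = \norm{Q_jf}_{L^1_x L^2_{y,t}}.\]
Since the implicit constant $\norm{K_1}_{L^{\infty}}$ does not depend on $x$, taking the supremum over $x$ yields the first estimate.

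The second estimate follows the same scheme with the roles of $x$ and $y$ exchanged: applying $\pa_x\pa_y$ to $\widetilde{W_2}(t)Q_jf$ produces a factor $-\xi\mu$, equation \eqref{eq:kernel2} of \Cref{thm:2} permits passing the limit inside the $d\xi\,d\tau$ integral, and the second half of \Cref{thm:3} writes the inner $\mu$-integral as a convolution in $y$ against $K_2$ with $\norm{K_2}_{L^{\infty}_{\tau,\xi,y}}\lesssim 1$. Plancherel's theorem in $(x,t)$ followed by Minkowski's integral inequality in the $y$-convolution then gives $\norm{\pa_x\pa_y\widetilde{W_2}(t)Q_jf}_{L^{\infty}_y L^2_{x,t}}\lesssim\norm{Q_jf}_{L^1_y L^2_{x,t}}$. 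I do not expect a genuine obstacle here: the delicate points — interchanging $\lim_{\varepsilon\rightarrow 0}$ with the outer integrals, and establishing that $K_1$ and $K_2$ are bounded \emph{uniformly} in the frequency parameters $\tau,\mu$ (respectively $\tau,\xi$), in $\xi_0(\tau,\mu)$, and in $j$ — are precisely the content of \Cref{thm:1}, \Cref{thm:2} and \Cref{thm:3}. The only care required in the present argument is to invoke Minkowski's integral inequality in the correct direction, $\norm{\int g(\cdot,\tilde x)\,d\tilde x}_{L^2}\leq\int\norm{g(\cdot,\tilde x)}_{L^2}\,d\tilde x$, which is what turns the $x$-convolution against a bounded kernel into the $L^1_x$ norm appearing on the right-hand side.
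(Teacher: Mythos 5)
Your proposal is correct and follows essentially the same route as the paper: both arguments use \Cref{thm:1}, \Cref{thm:2} and \Cref{thm:3} to rewrite $\pa_x^2\widetilde{W_1}(t)Q_jf$ (resp. $\pa_x\pa_y\widetilde{W_2}(t)Q_jf$) as a convolution in the lateral variable against the uniformly bounded kernel $K_1$ (resp. $K_2$), and then conclude with Plancherel in $(y,t)$ (resp. $(x,t)$), Minkowski's integral inequality, and the $L^{\infty}$ bound on the kernel. The only cosmetic difference is that you phrase the reduction as computing $\mathcal{F}_{t,y}$ of the output slice-by-slice in $x$, while the paper keeps the oscillatory factors $e^{iy\mu+it\tau}$ explicit before applying Plancherel; the sign $-\xi^2$ versus $\xi^2$ is immaterial since only absolute values enter.
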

	\begin{proof} We prove the first estimate. By \Cref{thm:1}, \Cref{thm:2} and \Cref{thm:3} we have that:
		\begin{align*}
			\norm{\pa_x^2 \widetilde{W_1}(t)Q_j f}_{L^{\infty}_{x} L^2_{y,t}} & = \norm{ \int_{\mu,\tau} e^{iy\mu+it\tau} K_1(\tau,\cdot,\mu)\ast_{x} \widehat{Q_j f}^{(y,t)}(\tau,\cdot,\mu) \, d\tau\, d\mu}_{L^{\infty}_{x} L^2_{y,t}}\\
			& \lesssim \norm{ K_1(\tau,\cdot,\mu)\ast_{x} \widehat{Q_j f}^{(y,t)}(\tau,\cdot,\mu)}_{L^{\infty}_{x} L^2_{\mu,\tau}}\\
			& \lesssim \sup_{x} \int_{\R}\left( \int_{\R^2}  |K_1(\tau,x-\tilde{x},\mu)|^2\, |\widehat{Q_j f}^{(y,t)}(\tau,\tilde{x},\mu)|^2 \, d\tau d\mu\right)^{1/2} \, d\tilde{x} \\
			& \lesssim \int_{\R} \left(\int_{\R^2} |\widehat{Q_j f}^{(y,t)}(\tau,\tilde{x},\mu)|^2 \,d\tau d\mu \right)^{1/2} d\tilde{x}= \norm{Q_j f}_{L^1_{x} L^2_{y,t}}.
		\end{align*}
		We have first used the Plancherel inequality, and then the Minkowski inequality together with the fact that $K_1$ is uniformly bounded (\Cref{thm:3}).
	\end{proof}
	
	By \eqref{eq:relationshipW}, this theorem implies the smoothing effect for the Duhamel term. 
	\begin{cor}[Double smoothing effect]  The following estimates hold:
		\begin{align*}
			\norm{\pa_x^2 \int_{0}^t W_1 (t-t')Q_j F( t') \, dt'}_{L^{\infty}_{x}L^2_{y,t}} & \lesssim \norm{Q_j F}_{L^{1}_{x}L^2_{y,t}},\\
			\norm{\pa_x \, \pa_y \, \int_{0}^t W_2 (t-t')Q_j F( t') \, dt'}_{L^{\infty}_{y}L^2_{x,t}} & \lesssim \norm{Q_j F}_{L^{1}_{y}L^2_{x,t}}.
		\end{align*}
	\end{cor}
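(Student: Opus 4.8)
The plan is to read the estimate off from the identity \eqref{eq:relationshipW} together with \Cref{thm:smoothingeffect}. Since every operator in sight is a Fourier multiplier, the cutoff $\chi_1$ localizes \eqref{eq:relationshipW} to the region $\mathcal{R}_1$, so that
\begin{equation*}
2\int_0^t W_1(t-t')Q_j F(t')\,dt' = \widetilde{W_1}(t)Q_j F + \int_{\R} W_1(t-t')Q_j F(t')\,dt' - 2\int_{-\infty}^0 W_1(t-t')Q_j F(t')\,dt'.
\end{equation*}
Applying $\pa_x^2$, the first term on the right is exactly what the first estimate of \Cref{thm:smoothingeffect} bounds by $\norm{Q_j F}_{L^1_x L^2_{y,t}}$ (and by density we may take $F\in C_c^\infty(\R^3)$ throughout). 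The content is therefore to control the two remaining, \emph{non-retarded}, terms. Using the group property $W(t-t')=W(t)W(-t')$, each of them has the form $\int_{\R} W_1(t-t')G(t')\,dt'$, with $G=Q_j F$ in the first case and $G=Q_j F\cdot\mathbf 1_{\{t'<0\}}$ (extended by zero) in the second; in both cases $Q_j G=G$ and $\norm{G}_{L^1_x L^2_{y,t}}\le \norm{Q_j F}_{L^1_x L^2_{y,t}}$, so the truncation in time is harmless — the Hilbert transform one would otherwise meet here stays hidden inside $\widehat G$ and never needs to be touched.

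To handle a term of this form I would carry out the $t'$-integral, which turns $\pa_x^2\int_{\R} W_1(t-t')G(t')\,dt'$ into
\begin{equation*}
-\int_{\R^2} e^{ix\xi+iy\mu+it\,\Phi(\xi,\mu)}\,\xi^2\,\widehat G\bigl(\Phi(\xi,\mu),\xi,\mu\bigr)\,\chi_1(\xi,\mu)\,d\xi\,d\mu,
\end{equation*}
where $\widehat G$ is the full space-time Fourier transform, restricted to the characteristic surface $\tau=\Phi(\xi,\mu)$. From here I would run the same scheme as in the proof of \Cref{thm:linearsmoothing}: take the $L^2_{y,t}$-norm, apply the Plancherel theorem in $y$, change variables $\tilde\xi=\Phi(\xi,\mu)$ — whose Jacobian is $\sim|\xi|^{-2}$ on $\mathcal{R}_1$, exactly absorbing the weight $\xi^2$ — apply the Plancherel theorem in $t$, and finally estimate $|\widehat G(\tilde\xi,\xi,\mu)|$ crudely by $\norm{\widehat G^{(y,t)}(\tilde\xi,\cdot,\mu)}_{L^1_x}$ (which is independent of $\xi$, so the dependence $\xi=\xi(\tilde\xi,\mu)$ is irrelevant) before invoking Minkowski's inequality and Plancherel again in $(y,t)$. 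This yields $\norm{\pa_x^2\int_{\R} W_1(t-t')G\,dt'}_{L^2_{y,t}}\lesssim \norm{G}_{L^1_x L^2_{y,t}}$ uniformly in $x$, and taking the supremum in $x$ finishes the first estimate. The second estimate follows identically, with \eqref{eq:relationshipW} localized to $\mathcal{R}_2$, the second bound of \Cref{thm:smoothingeffect}, and the change of variables $\tilde\mu=\Phi(\xi,\mu)$ at fixed $\xi$, whose Jacobian $\sim|\xi\,\mu|^{-1}$ on $\mathcal{R}_2$ absorbs the weight $|\xi\,\mu|$ coming from $\pa_x\pa_y$.

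I expect the only genuinely delicate point to be the accounting that makes the gain a \emph{full} second derivative: one must verify that the symbol weights $\xi^2$ and $|\xi\,\mu|$ match exactly the Jacobians of the two changes of variables, and that the resulting bounds are uniform in the frequency parameters hidden in the cutoffs — the same caveat stressed in the paragraph preceding \Cref{thm:zeroes}. Everything else is bookkeeping already performed in \Cref{thm:linearsmoothing} and \Cref{thm:smoothingeffect}, and there is no new oscillatory-integral estimate to prove.
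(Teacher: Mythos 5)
Your proposal is correct and follows the same route as the paper: the paper's proof of this corollary is precisely the one-line appeal to \eqref{eq:relationshipW} combined with \Cref{thm:smoothingeffect}, with the two non-retarded terms left implicit. Your explicit treatment of those terms is sound — collapsing the $t'$-integral over $\R$ to the trace of $\widehat G$ on the surface $\tau=\Phi(\xi,\mu)$ and running the Plancherel/change-of-variables scheme is exactly equivalent to composing the one-derivative smoothing estimate of \Cref{thm:linearsmoothing} with its dual, each contributing one derivative, and the lower bounds \eqref{eq:region1}--\eqref{eq:region2} give the uniformity of the Jacobian cancellation you flag at the end.
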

	
\section{Maximal function estimates}\label{sec:maxfunc}
	
	\subsection{$TT^{\ast}$ argument}\label{sec:TTarg}
	
	We will follow the techniques in \cite{KPV} and also \cite{KZ}, which are useful thanks to the anisotropic nature of the KP equation. Let us start by defining:
	\begin{align}
		I_1^s (t,x,y) &  := \int_{\R^2} e^{i\xi x + i \mu y + i t \Phi (\xi,\mu)} \,  |\xi|^{-s} \, \chi_1 (\xi,\mu)\, d\mu \, d\xi,\\
		I_2^s (t,x,y) & := \int_{\R^2} e^{i\xi x + i \mu y + i t \Phi (\xi,\mu)} \,  |\mu|^{-s} \, \chi_2 (\xi,\mu)\, d\mu \, d\xi,
	\end{align}
	where $s>0$ will be made explicit later. We are looking for an estimate such as:
	\begin{equation}\label{eq:firstguess}
		\norm{W_1 (t) f}_{L^4_x L^{\infty}_{t,y}} \lesssim \norm{D_x^s \, f}_{L^2_{x,y}}.
	\end{equation}
	
	First we write the dual to estimate \eqref{eq:firstguess}:
	\begin{equation}\label{eq:dualest}
		\norm{\int_{\R} W_1(t) g(t,\cdot,\cdot)\, dt}_{L^2_{x,y}}\lesssim \norm{D_x^s \,g}_{L^{4/3}_x L^1_{t,y}}.
	\end{equation}
	By a $TT^{\ast}$ argument, the LHS of \eqref{eq:dualest} can be rewritten as:
	\[ \norm{\int_{\R} D_x^{-s} W_1(t) g(t,\cdot,\cdot)\, dt}_{L^2_{x,y}}^2 \lesssim \norm{g}_{L^{4/3}_x L^1_{t,y}}\, \norm{\int_{\R} D_x^{-2s}W_1 (t-t') \overline{g(t',x,y)} \, dt'}_{L^4_x L^{\infty}_{t,y}}.\]
	This means that \eqref{eq:dualest} is equivalent to:
	\begin{equation}\label{eq:maxest}
		\norm{\int_{\R} D_x^{-2s}\, W_1 (t-t') g(t',x,y) \, dt'}_{L^4_x L^{\infty}_{t,y}} \lesssim \norm{g}_{L^{4/3}_x L^1_{t,y}}.
	\end{equation}
	Note that 
	\[ \int_{\R} D_x^{-2s}\, W_1 (t-t') g(t',x,y) \, dt' = I_1^{2s} \ast g,\]
	where the convolution is in all three variables $t,x,y$. Now we may use the Hardy-Littlewood-Sobolev inequality to place this in the desired space.
	\[ \norm{I_1^{2s} \ast g}_{L^4_x L^{\infty}_{T,y}} \lesssim  \norm{ |\cdot|^{-\alpha} \ast_x  \norm{g}_{L^1_{T,y}} }_{L^4_x}  \lesssim\norm{g}_{L^{4/3}_x L^1_{T,y}}.\]
	as long as $\alpha=\frac{1}{2}$, where $\alpha=\alpha(s)$ in some way which will yield $s$.
	
	Therefore, the goal is to obtain an estimate of the form:
	\[ | I_1^{2s} (t,x,y)| \leq C \, |x|^{-1/2}\]
	for some $C$ independent of $t,y$.
		
	Instead of working with $I^{2s}_1$, let us define the following quantities
	\begin{equation}
	\label{eq:defikj}
	 I_{k,j} (t,x,y)= \int_{\R^2} e^{i\xi x + i \mu y + i t \Phi (\xi,\mu)} \, \alpha_k(\xi)\, \alpha_j (\mu)\, d\mu \, d\xi,
	 \end{equation}
	where $k,j\geq 0$. Here $\alpha_n$ is a $C_c^{\infty}(\R)$ function with $\mbox{supp}(\alpha_n)\subset [2^{n-1}, 2^{n+1}]$ (and $[0,2]$ in the case of $n=1$). We will implicitly consider the case $\xi,\mu,t>0$ but an analogous argument allows other possibilities. Note that direct integration yields the trivial estimate
	\begin{equation}\label{eq:trivialbound}
		| I_{k,j}(t,x,y)|\lesssim 2^{j+k}.
	\end{equation}
	We start our analysis by studying the $\mu$-integral:
	\[ f(\xi; t,y):= \int_{\R}  e^{i\mu y+ i t \, \left( \frac{1}{4}-\frac{3}{8}\xi\right) \, \mu^2} \, \alpha_j(\mu)\, d\mu.\]
	The phase has a stationary point at
	\begin{equation}\label{eq:defmu0}
	 \mu_0 = \frac{-y}{ t \, \left( \frac{1}{4}-\frac{3}{8}\xi\right)}.
	 \end{equation}
We will distinguish various cases in our analysis, depending on whether we are in the case of a stationary point (whenever $|\mu_0|\sim 2^j$) or not. We will also assume that $k\geq 7$ and $j<k-4$, which corresponds to the frequency-region $\mathcal{R}_1$, and will later consider other possibilities.
	
	\subsection{There is a stationary point}\label{sec:stationarypoint}
	
	We will first assume that $\mu_0$ is in the support of $\alpha_i$ for some $i\in \{j-2,j-1,j,j+1,j+2\}$. From \eqref{eq:defmu0}, it follows that $|y| \sim t\, 2^{k+j}$. Then we can use the following lemma to obtain asymptotics for our oscillatory integral. There are many versions of this result, here we follow Lemma 2.5 in \cite{KZ} (note that we added $e^{i\lambda \phi(x_0)}$ as it seems to be missing):
	
	\begin{lem}\label{thm:oscillatoryasymp} Suppose that 
		\[ \phi(x_0)=0,\ \phi'(x_0)=0,\ \mbox{and}\ \phi''(x_0)\neq 0.\]
		Suppose that $\psi$ is smooth and supported in a neighborhood of $x_0$ which is sufficiently small and contains at most one critical point of $\phi$. Then
		\[ I(\lambda)=\int e^{i\lambda \phi(x)} \, \psi(x)\,dx \approx \lambda^{-1/2}\,e^{i\lambda \phi(x_0)} \, \sum_{j=0}^{\infty} a_j \lambda^{-j/2},\]
		in the sense that for all $N,r\geq 0$
		\[ \left(\frac{d}{d\lambda}\right)^r \left[ I(\lambda) -  \lambda^{-1/2}\, e^{i\lambda \phi(x_0)}\, \sum_{j=0}^{N} a_j \lambda^{-j/2} \right] = \O (\lambda^{-r-(N+2)/2})\]
		as $\lambda\rightarrow\infty$.
		
		Moreover the bounds in the error term depend on upper bounds of finitely many derivatives of $\phi$ and $\psi$ in the support of $\psi$, the size of the support of $\psi$, and a lower bound for $|\phi''(x_0)|$. Furthermore, $a_j=0$ for $j$ odd and 
		\[ a_0 = \left(\frac{c}{-i\phi''(x_0)}\right)^{1/2}\, \psi(x_0) .\]
	\end{lem}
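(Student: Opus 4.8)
The plan is to prove the classical stationary phase asymptotic expansion by reducing to a Gaussian model integral via the Morse lemma. Since $\phi(x_0)=0$ by hypothesis the factor $e^{i\lambda\phi(x_0)}$ is just $1$; I would keep it in the statement only so that the lemma applies verbatim after replacing $\phi$ by $\phi-\phi(x_0)$, which is the form in which we meet it when $\phi=\Phi(\xi,\cdot)$. First I would invoke the Morse lemma: since $\phi''(x_0)\neq 0$, there is a $C^\infty$ change of variables $x\mapsto u$ on a neighbourhood $U\ni x_0$ with $u(x_0)=0$ and $\frac{dx}{du}(x_0)=\sqrt{2/|\phi''(x_0)|}$ such that $\phi(x(u))=\varepsilon\,u^2$ with $\varepsilon=\mathrm{sgn}(\phi''(x_0))$. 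Because the statement allows shrinking $\mathrm{supp}\,\psi$, we may assume $\mathrm{supp}\,\psi\subset U$, so that
\[ I(\lambda)=\int e^{i\lambda\varepsilon u^2}\,\Psi(u)\,du,\qquad \Psi(u):=\psi(x(u))\,\tfrac{dx}{du}(u)\in C_c^\infty(\R),\quad \Psi(0)=\psi(x_0)\sqrt{2/|\phi''(x_0)|}. \]
All constants below will depend only on finitely many derivative bounds of $\Psi$ on its support, hence — tracing through the Morse lemma — only on finitely many derivative bounds of $\phi$ and $\psi$ there, the size of $\mathrm{supp}\,\psi$, and a lower bound for $|\phi''(x_0)|$, which is exactly the dependence claimed.

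Next I would localize with an even cutoff $\eta\in C_c^\infty(\R)$, $\eta\equiv1$ near $0$, and split $\Psi=\eta\Psi+(1-\eta)\Psi$. On $\mathrm{supp}\big((1-\eta)\Psi\big)$ the phase $\varepsilon u^2$ has no critical point, so repeated integration by parts gives $\int e^{i\lambda\varepsilon u^2}(1-\eta)\Psi\,du=\O(\lambda^{-\infty})$, and the same holds after differentiating in $\lambda$ since $\partial_\lambda$ merely inserts $i\varepsilon u^2$ into the integrand. For the main part I would Taylor expand at $0$, writing $\eta(u)\Psi(u)=\sum_{m=0}^{N}\frac{\Psi^{(m)}(0)}{m!}u^m+u^{N+1}\rho_N(u)$ with $\rho_N\in C_c^\infty$ (integral form of the remainder; note $\eta\equiv1$ near $0$). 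The odd-$m$ terms integrate to zero by parity of $u^m\eta(u)$, which yields $a_j=0$ for $j$ odd. For $m=2\ell$, using $\int_\R e^{i\lambda\varepsilon u^2}\,du=(\pi/\lambda)^{1/2}e^{i\varepsilon\pi/4}$ and differentiating $\ell$ times in $\lambda$ gives $\int_\R e^{i\lambda\varepsilon u^2}u^{2\ell}\,du=c_\ell\,\lambda^{-\ell-1/2}$, while restoring $\eta\mapsto1$ costs only $\O(\lambda^{-\infty})$; for $\ell=0$ this produces
\[ a_0=\pi^{1/2}e^{i\varepsilon\pi/4}\,\Psi(0)=\psi(x_0)\sqrt{2\pi/|\phi''(x_0)|}\;e^{i\varepsilon\pi/4}=\Big(\tfrac{2\pi}{-i\phi''(x_0)}\Big)^{1/2}\psi(x_0), \]
i.e. $c=2\pi$ (one checks the two signs of $\phi''(x_0)$ separately against the principal branch of $(-i\phi''(x_0))^{1/2}$).

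The only genuinely technical point, which I expect to be the main obstacle, is the remainder bound $\int e^{i\lambda\varepsilon u^2}u^{N+1}\rho_N(u)\eta(u)\,du=\O(\lambda^{-(N+2)/2})$ together with its $\lambda$-derivatives. For this I would rescale $u=\lambda^{-1/2}v$, rewriting the integral as $\lambda^{-(N+2)/2}\int e^{i\varepsilon v^2}v^{N+1}(\rho_N\eta)(\lambda^{-1/2}v)\,dv$, and bound the $v$-integral uniformly in $\lambda\ge1$ by handling $|v|\le1$ trivially and integrating by parts $O(N)$ times on $|v|>1$ via $e^{i\varepsilon v^2}=\frac{1}{2i\varepsilon v}\frac{d}{dv}e^{i\varepsilon v^2}$ (the support restricts $v$ to $|v|\lesssim\lambda^{1/2}$, but the integration by parts already delivers a $\lambda$-independent bound). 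Applying $\partial_\lambda^r$ inserts $(i\varepsilon u^2)^r$, raising the power of $u$ by $2r$ and hence improving the decay to $\lambda^{-r-(N+2)/2}$; combined with the $\O(\lambda^{-\infty})$ contributions from the non-stationary pieces, this is precisely the asserted error. Collecting the even-$m$ contributions into $\lambda^{-1/2}e^{i\lambda\phi(x_0)}\sum_{j=0}^{N}a_j\lambda^{-j/2}$ then completes the proof.
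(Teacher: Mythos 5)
Your argument is correct, but note that the paper does not prove this lemma at all: it is quoted verbatim (with the factor $e^{i\lambda\phi(x_0)}$ reinstated) from Lemma 2.5 of the Kenig--Ziesler paper \cite{KZ}, so there is no in-paper proof to compare against. What you supply is the standard self-contained stationary-phase proof: Morse-lemma reduction of the phase to $\varepsilon u^2$, splitting off the non-stationary region by repeated integration by parts, Taylor expansion of the transplanted amplitude with the parity argument giving $a_j=0$ for odd $j$, explicit Fresnel moments giving the coefficients (and your identification $c=2\pi$, with the branch check against $(-i\phi''(x_0))^{1/2}$, is consistent with the unspecified constant in the statement), and the rescaling $u=\lambda^{-1/2}v$ plus integration by parts on $|v|>1$ for the remainder; inserting $(i\varepsilon u^2)^r$ under $\partial_\lambda^r$ correctly yields the $\lambda^{-r-(N+2)/2}$ bound, since the explicitly computed main terms cancel against the differentiated partial sum and the cutoff corrections are $\O(\lambda^{-\infty})$ together with all their $\lambda$-derivatives. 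Your tracking of the constants (finitely many derivatives of $\phi,\psi$, the size of $\mathrm{supp}\,\psi$, a lower bound for $|\phi''(x_0)|$) is exactly the uniformity the paper needs when it applies the lemma with rescaled phase $\tilde\mu-\tilde\mu^2$ and amplitude $\alpha_j(\tilde\mu\mu_0)$, and your remark about replacing $\phi$ by $\phi-\phi(x_0)$ addresses precisely the normalization issue the paper flags. Two cosmetic points you may wish to tighten in a write-up: state the decomposition as $\eta\Psi=\sum_{m\le N}\tfrac{\Psi^{(m)}(0)}{m!}u^m\eta(u)+u^{N+1}\rho_N(u)\eta(u)$ (as your later formulas in fact use), and record that the improper Fresnel moments $\int_{\R}e^{i\lambda\varepsilon u^2}u^{2\ell}\,du$ and the differentiation under the integral sign are justified by a Gaussian regularization or, equivalently, by treating only the cut-off integrals and absorbing the tails into the $\O(\lambda^{-\infty})$ terms; also, the Morse change of variables needs $|\phi''|$ bounded below on all of $\mathrm{supp}\,\psi$, which follows from the lower bound at $x_0$, the upper bound on $\phi'''$, and the hypothesis that the support is sufficiently small. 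None of these affect the validity of the argument.
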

	
	It is fundamental that we make sure that the implicit constants from using this result do not depend on any of our variables. In order to do that, we first change variables in the integral defining $f$ and take $\lambda= y\,\mu_0$, so that 
\[ f(\xi; t,y) = \int_{\R}  e^{i\, \lambda \, \left( \frac{\mu}{\mu_0}- \frac{\mu^2}{\mu_0^2}\right)} \, \alpha_j(\mu)\, d\mu= \mu_0\, \int_{\R}  e^{i\, \lambda \, \left( \tilde{\mu}- \tilde{\mu}^2\right)} \, \alpha_j(\tilde{\mu} \mu_0)\, d\tilde{\mu}.
\]
	Note that $\mu_0$ has size $2^j$, so intuitively $\alpha_j(\tilde{\mu} \mu_0)\approx \alpha_1 (\tilde{\mu})$. This admits uniform bounds and uniform support on $|\tilde{\mu}|\sim 1$, thus the new phase $\phi(\tilde{\mu})=\tilde{\mu}- \tilde{\mu}^2$ also admits uniform bounds. Consequently, we can use \Cref{thm:oscillatoryasymp} with 
	\[ I (\lambda)= \int_{\R}  e^{i\, \lambda \, \left( \tilde{\mu}- \tilde{\mu}^2\right)} \, \alpha_j(\tilde{\mu} \mu_0)\, d\tilde{\mu}.\]
	In particular we have that
	\[ I(\lambda)= a_0 \, \lambda^{-1/2}\, e^{i\lambda \phi(x_0)}+ E(\lambda)\]
	with uniform bounds:
\[ |E(\lambda)| \lesssim \lambda^{-1},\qquad |E'(\lambda)| \lesssim \lambda^{-2}.\]
	Note that this asymptotic expansion is relevant as long as $\lambda>1$. In this case, $|\lambda|\sim |y\, \mu_0|\sim t \,2^{2j+k}$.
	Therefore we are working in the regime where
	\begin{equation}\label{eq:timeregime}
		t\gtrsim 2^{-2j-k}.
	\end{equation}
	Going back to the variable $\xi$, we use that $\frac{d\lambda}{d\xi}= \frac{\lambda}{\xi}$ to find:
	\begin{equation}\label{eq:fasymp}
		f(\xi; t,y)  =   \frac{a_0}{ t^{1/2} \, \left( \frac{1}{4}-\frac{3}{8}\xi\right)^{1/2}} \, e^{\frac{i}{2}\,y\mu_0}+  \frac{y}{ t \, \left( \frac{1}{4}-\frac{3}{8}\xi\right)} \, E(\lambda),
		\end{equation}
	with the following error bounds:
	\begin{align}\label{eq:errorasymp}
		\Big | \frac{y}{ t \, \left( \frac{1}{4}-\frac{3}{8}\xi\right)} \, E(\lambda)\Big| & \lesssim 2^{j} |\lambda|^{-1} = t^{-1}\, 2^{-j-k},\\
		\Big | \frac{d}{d\xi}\left(  \frac{y}{ t \, \left( \frac{1}{4}-\frac{3}{8}\xi\right)} \, E(\lambda)\right) \Big| & \lesssim 2^{j-k} |\lambda|^{-1} = 2^{-k} \, |y|^{-1}= 2^{-k}\, (t 2^{k+j})^{-1} = 2^{-j-2k}\, t^{-1}.\nonumber
	\end{align}
	Using \eqref{eq:fasymp} we may write:
	\begin{align*}
		I_{k,j} (t,x,y) & = a_0 \, \int_{\R} e^{ix\xi + i t\, P(\xi)+\frac{i}{2} y\mu_0} \, \alpha_k (\xi) \,   \frac{1}{ t^{1/2} \, \left( \frac{1}{4}-\frac{3}{8}\xi\right)^{1/2}} \, d\xi + \E_{k,j} (t,x,y)\\
		&  = a_0 \, \mathcal{I}_{k,j}(t,x,y) + \E_{k,j} (t,x,y).
	\end{align*}
	where
		\[ P(\xi)=\frac{1}{16}\xi^3 - \frac{1}{8}\xi^2 +\frac{1}{2}\xi.\]
	 and the error term is
	\[ \E_{k,j} (t,x,y)= \int_{\R} e^{ix\xi + i t\, P(\xi)+\frac{i}{2} y\mu_0} \, \alpha_k (\xi) \,  \frac{y}{ t \, \left( \frac{1}{4}-\frac{3}{8}\xi\right)} \, E(y\mu_0)\,d\xi.\]
	
	We will first study the integral given by the top order:
	\begin{equation}\label{eq:toporder5}
		\mathcal{I}_{k,j} (t,x,y)  =\int_{\R} e^{ix\xi + i t\, P(\xi)+\frac{i}{2} y\mu_0} \, \alpha_k (\xi) \,   \frac{1}{ t^{1/2} \, \left( \frac{1}{4}-\frac{3}{8}\xi\right)^{1/2}} \, d\xi.
	\end{equation}
	Recall that $|y|\sim t \, 2^{k+j}$, and that $\mu_0$ was defined in \eqref{eq:defmu0}. We compute the derivative of the phase of $\mathcal{I}_{k,j}$:
	\begin{equation*}
 \phi(\xi)  := x\xi + t\, P(\xi)-\frac{y^2}{ 2 t \, \left( \frac{1}{4}-\frac{3}{8}\xi\right)},\qquad \phi'(\xi)  = x + t\, P'(\xi) +\frac{3}{8}\, \frac{y^2}{ t \, \left( \frac{1}{4}-\frac{3}{8}\xi\right)^2}.
\end{equation*}
	The term $ t\, P'(\xi)$ has size $t 2^{2k}$, while the term in $y$ only has size $t\, 2^{2j}$ (recall that we are in the case $j< k-4$). There are two cases to consider:
	\begin{itemize}
		\item When $|x|\gtrsim t 2^{2k}$, we have that $|\phi'(\xi)|\gtrsim |x|$, so \Cref{thm:VanderCorput} yields:
		\[ |\mathcal{I}_{k,j}(t,x,y)|\lesssim |x|^{-1} \, (\norm{\psi}_{L^{\infty}_{\xi}} + \norm{\pa_{\xi} \psi}_{L^{1}_{\xi}})\]
		where 
		\[ \psi (\xi)= \alpha_k (\xi) \,   \frac{1}{ t^{1/2} \, \left( \frac{1}{4}-\frac{3}{8}\xi\right)^{1/2}}.\]
		Note that $\norm{\psi}_{L^{\infty}_{\xi}}, \norm{\pa_{\xi} \psi}_{L^{1}_{\xi}} \lesssim t^{-1/2} \, 2^{-k/2}$ and therefore:
		\begin{equation}\label{eq:max1}
			|\mathcal{I}_{k,j}(t,x,y)|\lesssim |x|^{-1} \, 2^{-k/2} \, t^{-1/2} \qquad \mbox{when}\ |x|\gtrsim t 2^{2k}.
		\end{equation}
		
		\item When $|x|\lesssim t 2^{2k}$  we use the second derivative of the phase:
		\[ \phi''(\xi)=t\, P''(\xi)-\frac{27}{64}\,\frac{y^2}{ t \, \left( \frac{1}{4}-\frac{3}{8}\xi\right)^3}\]
		and has size $t \, 2^{k}$ from the first term. In that case, \Cref{thm:VanderCorput} gives the bound:
		\begin{equation}\label{eq:max2}
			|\mathcal{I}_{k,j}(t,x,y)|\lesssim |t\, 2^{k}|^{-1/2} \, t^{-1/2} 2^{-k/2} \lesssim t^{-1/2} \, |x|^{-1/2}\qquad \mbox{when}\ |x|\lesssim t 2^{2k}.
		\end{equation}
	\end{itemize}
	
	We can finally present our findings:
	\begin{prop} Suppose that $t>2^{-2j-k}$,  $k\geq 7$, $j\geq 0$ and $j<k-4$. Then we have that
		\[ |\mathcal{I}_{k,j} (t,x,y)|\lesssim |x|^{-1/2} \, 2^{j+k/2}.\]
		where the implicit constant is independent of $t,x,y,k,j$.
	\end{prop}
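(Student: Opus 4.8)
The plan is to assemble the two estimates for the top-order oscillatory integral $\mathcal{I}_{k,j}$ already obtained in this subsection, namely \eqref{eq:max1} and \eqref{eq:max2}, and to trade the strong $|x|^{-1}$ decay of the first one for the uniform factor $2^{j+k/2}$ by invoking the time restriction \eqref{eq:timeregime}. Recall that throughout the stationary-point regime we have $|y|\sim t\,2^{k+j}$ and, by \eqref{eq:timeregime}, $t\gtrsim 2^{-2j-k}$; in particular $t^{-1/2}\lesssim 2^{j+k/2}$. I will split according to whether $|x|\lesssim t\,2^{2k}$ or $|x|\gtrsim t\,2^{2k}$, exactly as in the derivation of \eqref{eq:max1}--\eqref{eq:max2}.

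In the regime $|x|\lesssim t\,2^{2k}$, estimate \eqref{eq:max2} gives $|\mathcal{I}_{k,j}(t,x,y)|\lesssim t^{-1/2}\,|x|^{-1/2}$, and since $t^{-1/2}\lesssim 2^{j+k/2}$ this is already the desired bound. In the complementary regime $|x|\gtrsim t\,2^{2k}$, estimate \eqref{eq:max1} provides the a priori stronger bound $|\mathcal{I}_{k,j}(t,x,y)|\lesssim |x|^{-1}\,2^{-k/2}\,t^{-1/2}$; I will downgrade the $|x|^{-1}$ to $|x|^{-1/2}$ by writing
\[
|x|^{-1}\,2^{-k/2}\,t^{-1/2}=|x|^{-1/2}\cdot\bigl(|x|^{-1/2}\,2^{-k/2}\,t^{-1/2}\bigr)
\]
and using $|x|\gtrsim t\,2^{2k}$, i.e.\ $|x|^{-1/2}\lesssim t^{-1/2}\,2^{-k}$, to bound the parenthesised factor by $t^{-1}\,2^{-3k/2}$.

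It then remains to check $t^{-1}\,2^{-3k/2}\lesssim 2^{j+k/2}$, i.e.\ $t\gtrsim 2^{-j-2k}$. This is the point where the hypotheses combine: since $j<k-4$ we have $k>j$, hence $2^{-2j-k}\ge 2^{-j-2k}$, so the time restriction $t>2^{-2j-k}$ from \eqref{eq:timeregime} already forces $t\gtrsim 2^{-j-2k}$. Combining the two cases yields $|\mathcal{I}_{k,j}(t,x,y)|\lesssim |x|^{-1/2}\,2^{j+k/2}$, with constant independent of $t,x,y,j,k$ because the constants in \eqref{eq:max1}--\eqref{eq:max2} came from \Cref{thm:VanderCorput} with bounds uniform in all parameters.

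I expect the only delicate point to be this exponent bookkeeping in the second case: one must verify that the loss incurred in passing from $|x|^{-1}$ to $|x|^{-1/2}$ is exactly absorbed by the $t^{1/2}$ hidden in $|x|\gtrsim t\,2^{2k}$ together with the lower bound on $t$. The inequality $k>j$ (equivalently the hypothesis $j<k-4$) is essential here and the estimate would fail without it; the remaining hypotheses $k\ge 7$, $j\ge 0$ only serve to place us genuinely in the region $\mathcal{R}_1$ where \eqref{eq:region1} and the asymptotics \eqref{eq:fasymp} are valid.
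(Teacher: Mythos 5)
Your proof is correct, and it follows the paper's argument almost verbatim: same split into $|x|\lesssim t\,2^{2k}$ and $|x|\gtrsim t\,2^{2k}$, same inputs \eqref{eq:max1}, \eqref{eq:max2} and the time restriction \eqref{eq:timeregime}, with the case $|x|\lesssim t\,2^{2k}$ handled identically. The only divergence is in the case $|x|\gtrsim t\,2^{2k}$: the paper interpolates \eqref{eq:max1} with the trivial bound \eqref{eq:trivialbound}, getting $|\mathcal{I}_{k,j}|\lesssim |x|^{-1/2}\,t^{-1/4}\,2^{j/2+k/4}$ and then using $t>2^{-2j-k}$, whereas you instead spend half of the $|x|^{-1}$ decay via the case constraint $|x|^{-1/2}\lesssim t^{-1/2}2^{-k}$ and then need $t\gtrsim 2^{-j-2k}$, which you deduce from $t>2^{-2j-k}$ and $j\le k$. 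Both routes are equally elementary; note, however, that your closing remark slightly overstates the role of $j<k-4$: at this bookkeeping step it is only your variant that requires $j\le k$ (the paper's interpolation does not), and the genuine place where $j<k-4$ is indispensable is upstream, in establishing \eqref{eq:max1}--\eqref{eq:max2} themselves (it guarantees that $t\,P'(\xi)\sim t\,2^{2k}$ dominates the $y$-term of size $t\,2^{2j}$ in $\phi'$).
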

	\begin{proof}
		Let us start with the case $|x|\gtrsim t 2^{2k}$. We interpolate between \eqref{eq:max1} and \eqref{eq:trivialbound} (which also holds for $\mathcal{I}_{k,j}$), and then use that $t>2^{-2j-k}$:
		\[ |\mathcal{I}_{k,j}(t,x,y)|\lesssim |x|^{-1/2} \, t^{-1/4} 2^{j/2 + k/4} \lesssim |x|^{-1/2} \, 2^{j+k/2}.\]
		
		In the case $|x|\lesssim t \, 2^{2k}$, we directly use \eqref{eq:max2} together with  $t>2^{-2j-k}$:
		\[ |\mathcal{I}_{k,j}(t,x,y)|\lesssim |x|^{-1/2}\, t^{-1/2}\lesssim |x|^{-1/2} \, 2^{j+k/2}.\]
	\end{proof}
	\begin{rk} Using \eqref{eq:errorasymp}, it is easy to check that $\E_{k,j}$ satisfies exactly the same estimates, and therefore the result extends from the top order, $\mathcal{I}_{k,j}$, to the full kernel $I_{k,j}$.
	\end{rk}
	When $t\leq 2^{-2j-k}$ we can directly use \eqref{eq:trivialbound} to obtain the following
	\begin{prop}\label{thm:smalltimemaximal} Suppose that $t\leq 2^{-2j-k}$,  $k\geq 7$, $j\geq 0$ and $j<k-4$. Then we have that
		\[ |I_{k,j} (t,x,y)|\lesssim |x|^{-1/2} \, 2^{j+k/2}.\]
		where the implicit constant is independent of $t,x,y,k,j$.
	\end{prop}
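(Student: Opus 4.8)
The plan is to estimate $I_{k,j}(t,x,y)$ by first performing the $\mu$-integration trivially, $\int\alpha_j(\mu)\,d\mu\lesssim 2^j$, which reduces matters to the one-dimensional oscillatory integral $J_\mu(x):=\int_{\R}\alpha_k(\xi)\,e^{i\xi x+it\Phi(\xi,\mu)}\,d\xi$, with a bound uniform in $\mu\in\operatorname{supp}\alpha_j$. Its phase $\Psi_\mu(\xi)=\xi x+t\Phi(\xi,\mu)$ has $\Psi_\mu''(\xi)=t\,\partial_\xi^2\Phi(\xi,\mu)=t(\tfrac38\xi-\tfrac14)$, so since $k\ge 7$ there is no critical point of $\Psi_\mu$ on $\operatorname{supp}\alpha_k$ and $|\Psi_\mu''|\sim t\,2^k$ there, while $\Psi_\mu'(\xi)=x+t\,\partial_\xi\Phi(\xi,\mu)$ with $|t\,\partial_\xi\Phi|\lesssim t\,2^{2k}$ on $\operatorname{supp}(\alpha_k\alpha_j)$, as one reads off from \eqref{eq:region1} together with $j<k-4$.

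I would then split into three ranges of $|x|$. If $|x|\le 2^{-k}$, the trivial bound \eqref{eq:trivialbound} already suffices: $|I_{k,j}|\lesssim 2^{j+k}=2^{k/2}\cdot 2^{j+k/2}\le|x|^{-1/2}\,2^{j+k/2}$. If $|x|\le A\,t\,2^{2k}$ for a large enough absolute constant $A$, the second-derivative van der Corput lemma (\Cref{thm:VanderCorput}, no monotonicity needed), applied with amplitude $\alpha_k$, for which $\|\alpha_k\|_{L^\infty_\xi}+\|\alpha_k'\|_{L^1_\xi}\lesssim 1$, gives $|J_\mu(x)|\lesssim(t\,2^k)^{-1/2}$, hence $|I_{k,j}|\lesssim 2^j(t\,2^k)^{-1/2}\lesssim 2^{j+k/2}|x|^{-1/2}$, since $|x|\le A\,t\,2^{2k}$ forces $(t\,2^k)^{-1/2}\lesssim 2^{k/2}|x|^{-1/2}$. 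If instead $|x|>A\,t\,2^{2k}$, then $|\Psi_\mu'|\ge|x|/2$ on $\operatorname{supp}\alpha_k$, and a single integration by parts yields $|J_\mu(x)|\lesssim|x|^{-1}+t\,2^{2k}|x|^{-2}\lesssim|x|^{-1}$, so $|I_{k,j}|\lesssim 2^j|x|^{-1}\le 2^{j+k/2}|x|^{-1/2}$ whenever $|x|>2^{-k}$. These three ranges exhaust $\R$: the middle one handles $|x|\le A\,t\,2^{2k}$, and for $|x|>A\,t\,2^{2k}$ either $|x|>2^{-k}$ (third range) or $|x|\le 2^{-k}$ (first range, which can occur only when $A\,t\,2^{2k}<2^{-k}$). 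I would also note that the hypothesis $t\le 2^{-2j-k}$ is exactly what makes the trivial treatment of the $\mu$-integral efficient: it is the regime in which the parameter $\lambda=y\mu_0$ of \Cref{sec:stationarypoint} satisfies $\lambda\lesssim 1$, so the stationary-phase expansion used in the previous propositions offers no improvement; in fact the estimates above are valid for all $t$ when $k\ge 7$ and $j<k-4$.

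The main obstacle is bookkeeping rather than ideas: one must check that every implicit constant produced by van der Corput and by the integration by parts is genuinely independent of $t,x,y,k,j$. This reduces to the uniform bounds $\|\alpha_k\|_{L^\infty}+\|\alpha_k'\|_{L^1}\lesssim 1$ and $\|\alpha_k\|_{L^1}\lesssim 2^k$, the uniform two-sided estimate $|\Psi_\mu''|\sim t\,2^k$ on $\operatorname{supp}\alpha_k$ (which uses only $k\ge 7$, not the precise form of the cutoffs), and the observation that the integration-by-parts error $|x|^{-2}\int|\alpha_k\Psi_\mu''|\,d\xi\lesssim t\,2^{2k}|x|^{-2}$ is absorbed into $|x|^{-1}$ precisely on the range $|x|>A\,t\,2^{2k}$. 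None of these steps is deep, but the anisotropy forces one to keep the $\xi$-scale $2^k$ and the $\mu$-scale $2^j$ separated carefully throughout.
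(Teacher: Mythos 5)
Your proof is correct, but it follows a different route from the paper. The paper's own argument is shorter and uses the hypothesis $t\leq 2^{-2j-k}$ in an essential way: it integrates by parts once in $\xi$ against $e^{ix\xi}$ on the full kernel, keeping the $\mu$-integral $f$ as a black box with the trivial bounds $|f|\lesssim 2^{j}$ and $|\pa_\xi f|\lesssim t\,2^{3j}$, so that $|I_{k,j}|\lesssim |x|^{-1}\max\{t\,2^{3j+k},2^{j}\}\lesssim |x|^{-1}2^{j}$ precisely because $t\leq 2^{-2j-k}$, and then interpolates with the trivial bound \eqref{eq:trivialbound} to get $|x|^{-1/2}2^{j+k/2}$. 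You instead freeze $\mu$, treat the one-dimensional $\xi$-integral by second-derivative Van der Corput (using $|\Psi_\mu''|\sim t\,2^{k}$, uniform in $\mu$ and $y$) in the range $|x|\lesssim t\,2^{2k}$ and by a single non-stationary integration by parts (with $|\Psi_\mu'|\geq |x|/2$) when $|x|\gg t\,2^{2k}$, and only then integrate trivially in $\mu$; the hypothesis $t\leq 2^{-2j-k}$ is never used, so your argument in fact gives the bound of \Cref{thm:maximal} uniformly in $t$ for this frequency configuration, whereas the paper reaches the $t>2^{-2j-k}$ regime through the stationary-phase expansion of \Cref{sec:stationarypoint}. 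What each buys: the paper's proof is two lines but leans on the smallness of $t$ (and, as written, on the sign conventions for $x,\xi,t$ to make the integration by parts clean); yours is a bit longer but self-contained, with all constants visibly uniform, and it decouples this kernel bound from the stationary-phase machinery (which the paper still needs elsewhere, e.g. for the $L^4_yL^\infty_{T,x}$ estimates that exploit $|y|\sim t\,2^{j+k}$). One small inaccuracy that does not affect anything: your parenthetical claim that $\Psi_\mu$ has no critical point on $\operatorname{supp}\alpha_k$ is only true under the paper's implicit convention $x,\xi,t>0$; for $x<0$ with $|x|\sim t\,2^{2k}$ the first derivative can vanish. Since your middle range uses only $|\Psi_\mu''|\sim t\,2^k$ (no monotonicity or nondegeneracy of critical points needed) and your outer range verifies $|\Psi_\mu'|\geq|x|/2$ directly, nothing in the proof depends on that remark, but you should delete or correct it.
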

	\begin{proof}
		Integrationg by parts gives:
		\[ I_{k,j}(t,x,y)= x^{-1} \int_{\R} e^{ix\xi} \, \frac{d}{d\xi}\left( \alpha_k(\xi)\, f(t,x,\xi)\right)\, d\xi.\]
		Then we may use the trivial bound $|\pa_{\xi} f(t,x,\xi)|\lesssim t \, 2^{3j}$ and $|f(t,x,\xi)|\lesssim 2^{j}$ to obtain the bound
		\[ |I_{k,j}(t,x,y)|\lesssim |x|^{-1} \, \max\{ t\, 2^{3j+k},  2^{j}\}.\]
		Now we use the fact that $t\leq 2^{-2j-k}$ to obtain:
		\[ |I_{k,j}(t,x,y)|\lesssim |x|^{-1} \, 2^{j}.\]
		Finally, we interpolate between this and \eqref{eq:trivialbound} to obtain the estimate $|x|^{-1/2} 2^{j+k/2}$.
	\end{proof}
	
	\begin{rk} These results have been obtained under the assumption that we have a stationary point $\mu_0$. This is the most difficult case, and therefore these results extend to the case where there is no stationary point. The latter is briefly discussed in \Cref{sec:nostationary}.
	\end{rk}
	
As explained in \Cref{sec:TTarg}, our final loss of derivatives, $s$, is chosen to offset the growth of $2^{j+k/2}$, i.e.
\[ (-2s)\cdot k + j+\frac{k}{2}<0.\]
This yields a loss of derivatives of $s>3/4$. We summarize our findings in the following theorem:
	\begin{thm}\label{thm:maximal} Suppose that  $k\geq 7$, $j\geq 0$ and $j<k-4$. Then we have that 
		\[ |I_{k,j} (t,x,y)|\lesssim |x|^{-1/2} \, 2^{j+k/2}.\]
		uniformly in $t$ and $y$. By the $TT^{\ast}$ argument in \Cref{sec:TTarg}, this implies the following maximal function estimate for any $s>3/4$:
		\[ \norm{W_1 (t) u_0}_{L^4_x L^{\infty}_{t,y}} \lesssim \norm{\langle\nabla\rangle^s \, u_0}_{L^2_{x,y}}.\]
	\end{thm}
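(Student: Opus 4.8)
\emph{Proof proposal.} Most of the oscillatory-integral work is already in place in \Cref{sec:stationarypoint}; what remains is to collect the cases into the kernel bound and then run the $TT^{\ast}$ reduction of \Cref{sec:TTarg}. For the pointwise estimate I would fix $k\ge 7$, $0\le j<k-4$ and, using the sign symmetries noted after \eqref{eq:defikj}, reduce to $\xi,\mu,t>0$. Letting $\mu_0$ be the stationary point of the $\mu$-phase from \eqref{eq:defmu0}: if $\mu_0$ lies in the support of $\alpha_i$ for some $i\in\{j-2,\dots,j+2\}$ we are in the situation of \Cref{sec:stationarypoint}, where for $t>2^{-2j-k}$ the Proposition preceding \Cref{thm:smalltimemaximal} gives $|\mathcal{I}_{k,j}(t,x,y)|\lesssim|x|^{-1/2}2^{j+k/2}$ and the error term $\E_{k,j}$ obeys the same bound by \eqref{eq:errorasymp}, so $|I_{k,j}(t,x,y)|\lesssim|x|^{-1/2}2^{j+k/2}$; for $t\le 2^{-2j-k}$ this is exactly \Cref{thm:smalltimemaximal}. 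If $\mu_0$ is not within that dyadic range there is no stationary point and the bound holds a fortiori (this is the content of \Cref{sec:nostationary}). Combining these, together with the analogous cases for the remaining sign conventions, yields the first display with an implicit constant independent of $t,x,y,k,j$.

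Next I would pass from the blocks $I_{k,j}$ to $I_1^{2s}$. On the support of $\alpha_k(\xi)$ one has $|\xi|^{-2s}=2^{-2sk}\,\beta_k(\xi)$, where, after rescaling $\xi\mapsto 2^{-k}\xi$, the function $\beta_k$ is a smooth bump on $|\xi|\sim 2^k$ whose derivatives are bounded uniformly in $k$; similarly $\chi_1$ restricted to $|\xi|\sim 2^k$, $|\mu|\sim 2^j$ is a uniformly controlled cutoff. Absorbing these factors into the amplitudes (which does not affect the bound just proved) and decomposing $|\xi|^{-2s}\chi_1(\xi,\mu)$ into its dyadic pieces, whose index set is contained in $\{k\ge 7,\ 0\le j\lesssim k\}$, I obtain
\[
|I_1^{2s}(t,x,y)|\ \lesssim\ |x|^{-1/2}\sum_{k\ge 7}2^{-2sk}\,2^{k/2}\sum_{0\le j\lesssim k}2^{j}\ \lesssim\ |x|^{-1/2}\sum_{k\ge 7}2^{k(3/2-2s)},
\]
and the geometric series converges precisely when $s>3/4$. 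Hence $|I_1^{2s}(t,x,y)|\lesssim|x|^{-1/2}$, uniformly in $t$ and $y$.

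Finally I would close via $TT^{\ast}$. By the reduction in \Cref{sec:TTarg}, the estimate $\norm{W_1(t)u_0}_{L^4_xL^{\infty}_{t,y}}\lesssim\norm{D_x^s u_0}_{L^2_{x,y}}$ is equivalent to \eqref{eq:maxest}, i.e. to $\norm{I_1^{2s}\ast g}_{L^4_xL^{\infty}_{t,y}}\lesssim\norm{g}_{L^{4/3}_xL^1_{t,y}}$, the convolution being in all three variables. Moving the absolute value inside, Young's inequality in $(t,y)$ bounds the $(t',y')$-convolution by $\norm{I_1^{2s}(\cdot,x-\tilde x,\cdot)}_{L^{\infty}_{t,y}}\norm{g(\tilde x,\cdot,\cdot)}_{L^1_{t,y}}\lesssim|x-\tilde x|^{-1/2}\norm{g(\tilde x,\cdot,\cdot)}_{L^1_{t,y}}$ by the previous step; the integral Minkowski inequality then leaves $\big(|\cdot|^{-1/2}\ast_x\norm{g(\cdot,\cdot,\cdot)}_{L^1_{t,y}}\big)(x)$, and the one-dimensional Hardy--Littlewood--Sobolev inequality with kernel $|x|^{-1/2}$ (which maps $L^{4/3}_x\to L^4_x$) gives the claim. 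Replacing $D_x^s$ by $\langle\nabla\rangle^s$ is immediate since $|\xi|\le 1+|\xi|+|\mu|$.

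\textbf{Main obstacle.} Since the stationary-phase asymptotics with uniform error control and the Van der Corput bounds are already established, the only genuine difficulty is the bookkeeping in the dyadic step: one must check that every dyadic block actually carried by $\chi_1$—including the transition regions near $|\xi|\sim 100$ and near $|\mu|\sim|\xi|/200$—falls inside the index range $\{k\ge 7,\ 0\le j<k-4\}$ to which the kernel bound applies (the finitely many exceptional low-frequency blocks being estimated directly or folded into the $\mathcal{R}_0$ analysis), and that the $\ell^1$ summation in $j$ costs only a factor $\sim 2^k$, so that the total exponent is $2^{k(3/2-2s)}$ and the convergence threshold is exactly $s>3/4$. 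Preserving uniformity in $t$ and $y$ through both the case split and this summation is the point requiring care.
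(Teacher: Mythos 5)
Your proposal is correct and follows essentially the same route as the paper: the kernel bound is exactly the assembly of the stationary-point/small-time propositions (with \Cref{sec:nostationary} covering the non-stationary case), and the passage to the maximal estimate is the same $TT^{\ast}$ reduction with Young/Minkowski and one-dimensional Hardy--Littlewood--Sobolev from \Cref{sec:TTarg}. Your explicit dyadic summation $\sum_k 2^{k(3/2-2s)}$ merely spells out the exponent condition $-2sk+j+k/2<0$ that the paper uses to arrive at the same threshold $s>3/4$.
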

	\begin{rk}
	One may compare these estimates with the maximal function estimates in the work of Linares and Pastor \cite{LinaresPastor} for the modified Zakharov-Kuznetsov equation. In fact the range for $s$ is the same, despite the different dispersive relation.
	\end{rk}
	
	As a direct consequence, we obtain a similar estimate for the Duhamel term:
	
	\begin{cor}\label{thm:maximal2} For $s>3/4$, we have 
		\[ \norm{ \int_0^t W_1 (t-t') F(t')\, dt'}_{L^4_x L^{\infty}_{T,y}} \lesssim \norm{\langle\nabla\rangle^s F}_{L^1_T L^2_{x,y}}.\]
	\end{cor}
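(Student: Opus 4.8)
The plan is to deduce the inhomogeneous bound from the homogeneous maximal function estimate of \Cref{thm:maximal} by Minkowski's integral inequality, the only care needed being the interaction between the outer $L^\infty$ norm in time and the variable upper limit in the Duhamel integral. As is standard, it suffices to prove the estimate for $F\in C_c^\infty(\R^3)$ and then extend by density, since such functions are dense in the space of $F$ with $\norm{\langle\nabla\rangle^s F}_{L^1_T L^2_{x,y}}<\infty$.

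First I would rewrite the Duhamel term as an integral over the fixed interval $[0,T]$,
\[ \int_0^t W_1(t-t')F(t')\,dt' = \int_0^T \mathbf{1}_{[0,t]}(t')\,W_1(t-t')F(t')\,dt'. \]
For fixed $t'$, as $t$ ranges over $[t',T]$ the shift $\sigma:=t-t'$ ranges over $[0,T-t']\subset[0,\infty)$, so taking absolute values inside the integral and passing to the supremum over $t\in[0,T]$ and $y\in\R$ gives the pointwise-in-$x$ bound
\[ \sup_{t\in[0,T],\,y\in\R}\Big|\int_0^t W_1(t-t')F(t')(x,y)\,dt'\Big| \;\le\; \int_0^T \Big(\sup_{\sigma\ge 0,\,y\in\R}\big|W_1(\sigma)F(t')(x,y)\big|\Big)\,dt'. \]
Applying $\norm{\cdot}_{L^4_x}$ to both sides and invoking Minkowski's integral inequality (legitimate since $4\ge 1$) yields
\[ \norm{\int_0^t W_1(t-t')F(t')\,dt'}_{L^4_x L^\infty_{T,y}} \;\le\; \int_0^T \norm{W_1(\sigma)F(t')}_{L^4_x L^\infty_{\sigma,y}}\,dt'. \]

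Next, for each fixed $t'\in[0,T]$ I would apply \Cref{thm:maximal} with initial datum $u_0=F(t',\cdot)$: for any $s>3/4$ one has $\norm{W_1(\sigma)F(t')}_{L^4_x L^\infty_{\sigma,y}}\lesssim\norm{\langle\nabla\rangle^s F(t',\cdot)}_{L^2_{x,y}}$, with implicit constant independent of $t'$. Integrating this inequality in $t'$ over $[0,T]$ gives exactly
\[ \norm{\int_0^t W_1(t-t')F(t')\,dt'}_{L^4_x L^\infty_{T,y}} \;\lesssim\; \int_0^T \norm{\langle\nabla\rangle^s F(t',\cdot)}_{L^2_{x,y}}\,dt' \;=\; \norm{\langle\nabla\rangle^s F}_{L^1_T L^2_{x,y}}, \]
which is the claimed bound.

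The one point that is not entirely automatic, and which I regard as the only genuine content of the argument, is the first step: the outer $L^\infty_t$ norm must be reconciled with the variable upper limit $t$ in $\int_0^t$. This is handled by dominating the supremum over $t\in[0,T]$ of $|W_1(t-t')F(t')|$ by the supremum over all admissible shifts $\sigma=t-t'\ge 0$, which is permissible precisely because the kernel bound underlying \Cref{thm:maximal} (equivalently, the estimate $\norm{W_1(\sigma)u_0}_{L^4_x L^\infty_{\sigma,y}}\lesssim\norm{\langle\nabla\rangle^s u_0}_{L^2_{x,y}}$) is uniform in the time variable. No cancellation or further oscillatory-integral analysis is required here, all of that having already been carried out in \Cref{sec:TTarg} and in the proof of \Cref{thm:maximal}; the remainder is the bookkeeping above.
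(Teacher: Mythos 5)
Your proof is correct and follows essentially the same route as the paper: Minkowski's integral inequality to pull the $t'$-integration outside the $L^4_x L^\infty_{T,y}$ norm, followed by the homogeneous maximal function estimate of \Cref{thm:maximal} applied for each fixed $t'$. The only immaterial difference is bookkeeping of the shift $t\mapsto t-t'$: you enlarge the time supremum to all $\sigma\ge 0$ (using that the homogeneous bound is uniform in time), whereas the paper writes $W_1(t-t')F(t')=W_1(t)\tilde F(t')$ with $\tilde F(t'):=W_1(-t')F(t')$ and invokes the $L^2$-boundedness of the flow at the end; both devices are valid.
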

	\begin{proof}
		Let $\chi_{[0,t]}(t')$ be a time cut-off and $\tilde{F}(t'):= W_1 (-t') F(t')$. By the Minkowski inequality,
		\begin{align*}
			\norm{ \int_0^t W_1 (t-t') F(t')\, dt'}_{L^4_x L^{\infty}_{t,y}} & \lesssim \int_0^T \norm{ \chi_{[0,t]} W_1 (t-t') F(t')}_{L^4_x L^{\infty}_{T,y}} \, dt'\\
			& \lesssim \int_0^T \norm{W_1 (t) \tilde{F}(t')}_{L^4_x L^{\infty}_{T,y}} \\
			& \lesssim \int_0^T \norm{\langle\nabla\rangle^s \tilde{F}(t')}_{L^2_{x,y}} \, dt'=\norm{\langle\nabla\rangle^s F}_{L^1_T L^2_{x,y}}.
		\end{align*}
		The last step follows from the fact that $W_1(-t')$ is unitary in $L^2_{x,y}$.
	\end{proof}
	
	\subsection{There is no stationary point}\label{sec:nostationary}
	
Suppose that the stationary point $\mu_0$ is not in the support of  $\alpha_i$ for any $i\in\{j-2,j-1,j,j+1,j+2\}$. In that case $\mu$ and $\mu_0$ have different orders of magnitude, in particular $|\mu|>4 |\mu_0|$ or $|\mu_0|>4|\mu|$. We rewrite
	\begin{equation}\label{eq:muintegral}
	 f(t,\xi,y) =\int_{\R} e^{it(\frac{1}{4}-\frac{3}{8}\xi)\, \mu\, (\mu-\mu_0)} \, \alpha_j (\mu)\, d\mu.
	 \end{equation}
	Suppose for instance that $\mu>4\, \mu_0$. In the first case, the derivative of the phase is bounded below by $t\, 2^{k+j}$ and therefore \Cref{thm:VanderCorput} yields:
	\begin{equation*}
	|f(t,\xi,y)|  \lesssim \min\{ 2^{j},\  t^{-1} \, 2^{-k-j} \},\qquad |\pa_{\xi} f(t,\xi,y)| \lesssim \min\{ t\, 2^{3j},\ 2^{j-k}\}.
	\end{equation*}
	which are better than before. Then one uses \Cref{thm:VanderCorput} again on the $\xi$-integral to obtain better control than when we had a stationary point. 
	
	\subsection{The case of $W_2$}
	
	We will also need to estimate $W_2$ in $L^4_x L^{\infty}_{T,y}$. This can be achieved by studying $I_{k,j}$ in \eqref{eq:defikj}, but in the case $j\geq k-4$ and $k\geq 7$. The arguments presented above apply to this case with minimal changes: \eqref{eq:trivialbound} is still true, as are \eqref{eq:timeregime}, \eqref{eq:fasymp} and \eqref{eq:errorasymp}. \eqref{eq:max1} remains true after a small change:
	\[ |\mathcal{I}_{k,j}(t,x,y)|\lesssim |x|^{-1} \, 2^{-k/2} \, t^{-1/2} \qquad \mbox{when}\ |x|\gtrsim t 2^{2j}.\]
	Now we have $|x|\gtrsim t \, 2^{2j}$ instead of $t\, 2^{2k}$ since $j$ dominates $k$.
	
	Similarly, \eqref{eq:max2} is still valid in the regime where $|x|\lesssim t \, 2^{2j}$. Regarding \eqref{eq:max2}, it is important to note that there is no cancellation of the second derivative; note that
	\[ \phi''(\xi) =t\, P''(\xi)-\frac{27}{64}\,\frac{y^2}{ t \, \left( \frac{1}{4}-\frac{3}{8}\xi\right)^3}= t\, \left(\frac{3}{8}\xi - \frac{1}{4}\right) + \frac{27}{64}\,\frac{y^2}{ t \, \left( \frac{3}{8}\xi - \frac{1}{4}\right)^3}.\]
	Therefore, when $\xi>100$ both summands have the same sign, and the same happens when $\xi<-100$.
	
	All in all,  we may bound $|\phi''(\xi)|\gtrsim t \, 2^{j}$ whenever $|x|\lesssim t \, 2^{2j}$, thus recovering \eqref{eq:max2}. We then obtain
	\begin{align*}
	 |\mathcal{I}_{k,j} (t,x,y)|& \lesssim |x|^{-1/2} \, 2^{j+k/2} \qquad \mbox{when}\ t>2^{-2j-k},\\
 |\mathcal{I}_{k,j} (t,x,y)| & \lesssim |x|^{-1/2} \, 2^{3k/2} \qquad \mbox{when}\ t\leq 2^{-2j-k}.
 \end{align*}
	This time, the first estimate dominates given that $j\geq k-4$. However, this makes no difference to the loss of derivatives, and we again have
	\[ \norm{W_2 (t) u_0}_{L^4_x L^{\infty}_{T,y}}\lesssim \norm{\langle\nabla\rangle^s u_0}_{L^2_{x,y}} \]
	for any $s>3/4$.
	
	\subsection{Maximal function estimates in $L^4_y L^{\infty}_{T,x}$}
	
	Our goal in this subsection is to derive maximal function estimates for $I_{k,j}$ in \eqref{eq:defikj} in the space $L^4_y L^{\infty}_{T,x}$. The $TT^{\ast}$ argument from \Cref{sec:TTarg} shows that it is enough to obtain an estimate of the form:
	\[ |I_{k,j} (t,x,y)|\leq C\, |y|^{-1/2},\]
	where $C$ might depend on $j,k$ (which will yield $s$), but not $t,x$. 
	
	In order to do this, one can repeat the arguments above while inverting the order of the $\mu$ and $\xi$ integrals. Alternatively, we can use a shortcut. Let us give an example in the more difficult case, where we have a stationary point. From \eqref{eq:defmu0}, we know that $|y|\sim t \, 2^{j+k}$ which we can use to trade decay in $|x|$ for decay in $|y|$. As an example, consider the case where $k\geq 7$, $j\geq 0$, $j<k-4$ and  $t>2^{-2j-k}$. We distinguish two cases:
	\begin{itemize}
	\item When $|x|\gtrsim t \, 2^{2k}$, we have  that $|x|\gtrsim 2^{k-j}\, |y|$. We use this bound on \eqref{eq:max1} together with \eqref{eq:trivialbound} to obtain:
	\[ |\mathcal{I}_{k,j} (t,x,y)|\lesssim  |y|^{-1/2}\, 2^{3j/2}.\]
	\item  When $|x|\lesssim t \, 2^{2k}$, we directly use $|y|\sim t \, 2^{j+k}$ on \eqref{eq:max2}, together with \eqref{eq:trivialbound} to obtain:
	\[ |\mathcal{I}_{k,j} (t,x,y)|\lesssim |y|^{-1/2} \, 2^{j+k/2}.\]
	\end{itemize}
	
	Analogous estimates take care of the remaining cases, which we summarize below.
	\begin{thm}\label{thm:maximal3} Suppose that  $k\geq 7$ and $j\geq 0$. Then we have that 
		\[ |I_{k,j} (t,x,y)|\lesssim |y|^{-1/2} \, \max\{ 2^{3k/2}, 2^{3j/2}\}. \]
		uniformly in $t$ and $x$. 
		By the $TT^{\ast}$ argument in \Cref{sec:TTarg}, this implies the following maximal function estimate for any $s>3/4$ and any $r=1,2$:
		\[ \norm{W_r (t) u_0}_{L^4_y L^{\infty}_{t,x}} \lesssim \norm{\langle\nabla\rangle^s \, u_0}_{L^2_{x,y}}.\]
	\end{thm}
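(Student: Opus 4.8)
The plan is to obtain the pointwise bound $|I_{k,j}(t,x,y)|\lesssim |y|^{-1/2}\max\{2^{3k/2},2^{3j/2}\}$ by revisiting each regime that appeared in the analysis of the $L^4_x L^{\infty}_{T,y}$ estimate and converting the decay in $|x|$ (or, where that is unavailable, the direct oscillatory bounds) into decay in $|y|$. The conversion mechanism is the one already used in the worked example above: whenever the stationary point $\mu_0$ of the $\mu$-integral lies in the support of $\alpha_i$ for some $i\in\{j-2,\dots,j+2\}$, the relation \eqref{eq:defmu0} together with $|\tfrac14-\tfrac38\xi|\sim 2^k$ forces $|y|\sim t\,2^{j+k}$, and one substitutes this identity into the estimates already proved---\eqref{eq:max1}, \eqref{eq:max2}, \eqref{eq:trivialbound}, \Cref{thm:smalltimemaximal}---to trade powers of $|x|$ and of $t$ for powers of $|y|$. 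The case $k\geq 7$, $0\leq j<k-4$, $t>2^{-2j-k}$ with a stationary point was carried out there, giving $|I_{k,j}|\lesssim |y|^{-1/2}2^{3j/2}$ (resp.\ $|y|^{-1/2}2^{j+k/2}$), both of which are $\lesssim|y|^{-1/2}2^{3k/2}$; I would run the remaining cases in the same spirit.

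Still in the region $j<k-4$ (the one relevant to $W_1$): (i) if $t\leq 2^{-2j-k}$ and a stationary point is present, then $|y|\sim t\,2^{j+k}\lesssim 2^{-j}$, so $|y|^{-1/2}\gtrsim 2^{j/2}$ and \eqref{eq:trivialbound} alone gives $|I_{k,j}|\lesssim 2^{j+k}\lesssim |y|^{-1/2}2^{j/2+k}\leq |y|^{-1/2}2^{3k/2}$; (ii) if there is no stationary point, one splits according to whether $|\mu|>4|\mu_0|$ (so $|y|\lesssim t\,2^{j+k}$) or $|\mu_0|>4|\mu|$ (so $|y|\sim t\,2^{k}|\mu_0|\gtrsim t\,2^{j+k}$). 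In the first subcase the improved bounds $|f|\lesssim 2^j$ and $|\pa_\xi f|\lesssim 2^{j-k}$ from \Cref{sec:nostationary} feed into \Cref{thm:VanderCorput} applied to the $\xi$-integral, giving $|I_{k,j}|\lesssim |x|^{-1/2}2^{j+k/2}$ when $|x|\gtrsim t\,2^{2k}$ and $|I_{k,j}|\lesssim t^{-1/2}2^{j-k/2}$ when $|x|\lesssim t\,2^{2k}$; substituting $|x|\gtrsim 2^{k-j}|y|$ in the former and $t\gtrsim |y|\,2^{-j-k}$ in the latter collapses both to $|I_{k,j}|\lesssim |y|^{-1/2}2^{3j/2}$. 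In the second subcase the non-stationary $\mu$-integral yields $|f|\lesssim (t\,2^{k}|\mu_0|)^{-1}\sim |y|^{-1}$, so bounding the $\xi$-integral trivially gives $|I_{k,j}|\lesssim 2^{k}|y|^{-1}$, and interpolating with \eqref{eq:trivialbound} gives $|I_{k,j}|\lesssim |y|^{-1/2}2^{j/2+k}\leq|y|^{-1/2}2^{3k/2}$. In every case $|I_{k,j}|\lesssim|y|^{-1/2}2^{3k/2}=|y|^{-1/2}\max\{2^{3k/2},2^{3j/2}\}$.

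For the region $j\geq k-4$ (relevant to $W_2$) the scheme is identical, using the non-degenerate lower bound $|\phi''(\xi)|\gtrsim t\,2^{j}$ established in the $W_2$ discussion of \Cref{sec:maxfunc}---where the two terms making up $\phi''$ share a sign, so there is no cancellation, and the second-derivative contribution combined with $|y|\sim t\,2^{j+k}$ supplies the gain when $j>k$---and the threshold $|x|\gtrsim t\,2^{2j}$ in place of $|x|\gtrsim t\,2^{2k}$. Since $|y|\sim t\,2^{j+k}$ still holds whenever a stationary point is present, the same bookkeeping produces $|I_{k,j}|\lesssim |y|^{-1/2}2^{3j/2}\sim |y|^{-1/2}\max\{2^{3k/2},2^{3j/2}\}$ here, and the short-time and non-stationary subcases go through as before. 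Once the pointwise bound is in hand, the maximal function estimate follows by decomposing the frequency support of $u_0$ into the dyadic boxes $\{|\xi|\sim 2^k,\ |\mu|\sim 2^j\}$ on which $I_{k,j}$ is the kernel and running the $TT^{\ast}$ and Hardy--Littlewood--Sobolev argument of \Cref{sec:TTarg} with the roles of $x$ and $y$ interchanged: the factor $\langle\nabla\rangle^{-2s}$ contributes $\sim 2^{-2s\max(j,k)}$ on the box $(k,j)$, which dominates the growth $2^{3\max(j,k)/2}$ precisely when $2s>3/2$, i.e.\ $s>3/4$, after which the geometric-times-polynomial double series over $j,k\geq 0$ converges.

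The main obstacle is bookkeeping rather than any isolated hard estimate: one must verify, over the whole grid of subcases (stationary versus non-stationary $\mu$-integral, short versus long time according to whether $t\leq 2^{-2j-k}$, and $\mathcal{R}_1$ versus $\mathcal{R}_2$), that the power of $2$ multiplying $|y|^{-1/2}$ never exceeds $\max\{2^{3k/2},2^{3j/2}\}$ and that every implicit constant is genuinely independent of $t,x,y$ (and of the cutoff centers $\xi_0,\mu_0$). The delicate point is that the $|x|$-to-$|y|$ trade relies on the identity $|y|\sim t\,2^{j+k}$, which is available only when the $\mu$-integral has a stationary point; in the non-stationary regime there is no such identity and the estimate must be closed using the stronger direct Van der Corput bounds, and it is in the regime $|x|\lesssim t\,2^{2\max(j,k)}$---where only the second-derivative bound is at one's disposal---that the numerology is tightest and must be tracked with care.
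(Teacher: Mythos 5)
Your proposal follows essentially the same route as the paper: the ``shortcut'' of trading $|x|$-decay for $|y|$-decay via the stationary-point identity $|y|\sim t\,2^{j+k}$ applied to \eqref{eq:max1}, \eqref{eq:max2} and \eqref{eq:trivialbound}, with the short-time, non-stationary and $j\geq k-4$ regimes handled from the earlier bounds, followed by the same $TT^{\ast}$/Hardy--Littlewood--Sobolev argument with $x$ and $y$ interchanged giving $s>3/4$. The case bookkeeping you spell out (which the paper dismisses as ``analogous estimates take care of the remaining cases'') checks out, so the argument is correct.
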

	
	\begin{cor}\label{thm:maximal4} For $s>3/4$ and any $r=1,2$ we have that
		\[ \norm{\int_0^t W_r (t-t') F(t') \, dt'}_{L^4_y L^{\infty}_{T,x}} \lesssim \norm{\langle\nabla\rangle^s F}_{L^1_T L^2_{x,y}}.\]
	\end{cor}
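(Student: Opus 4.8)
The plan is to deduce this Duhamel bound from the homogeneous maximal function estimate of \Cref{thm:maximal3} exactly as \Cref{thm:maximal2} was deduced from \Cref{thm:maximal}: first reduce to the homogeneous bound by the Minkowski integral inequality, then absorb the propagator $W_r(-t')$ using the (near-)unitarity of the flow. Writing $\int_0^t W_r(t-t')F(t')\,dt' = \int_0^T \chi_{[0,t]}(t')\,W_r(t-t')F(t')\,dt'$ with $\chi_{[0,t]}(t')$ the time cut-off and pulling the $t'$-integral outside the $L^4_y L^\infty_{T,x}$-norm, one gets
\[
\norm{\int_0^t W_r(t-t')F(t')\,dt'}_{L^4_y L^\infty_{T,x}} \le \int_0^T \norm{\chi_{[0,t]}(t')\,W_r(t-t')F(t')}_{L^4_y L^\infty_{T,x}}\,dt'.
\]
For each fixed $t'\in[0,T]$ the cut-off restricts the output time to $t\in[t',T]$, so after translating $t\mapsto t-t'$ the inner norm is bounded by $\norm{W_r(\cdot)\,\tilde F(t')}_{L^4_y L^\infty_{t,x}}$ (with the time integration now over all of $[0,\infty)$), where $\tilde F(t'):=W_r(-t')F(t')$.

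Next I would apply \Cref{thm:maximal3}, which gives $\norm{W_r(t)\,\tilde F(t')}_{L^4_y L^\infty_{t,x}} \lesssim \norm{\langle\nabla\rangle^s\,\tilde F(t')}_{L^2_{x,y}}$ for any $s>3/4$. Since $W_r(-t')$ is the Fourier multiplier with symbol $\chi_r(\xi,\mu)\,e^{-it'\Phi(\xi,\mu)}$, which has modulus at most $1$ and commutes with $\langle\nabla\rangle^s$, it is an $L^2_{x,y}$-contraction commuting with $\langle\nabla\rangle^s$; hence $\norm{\langle\nabla\rangle^s\,\tilde F(t')}_{L^2_{x,y}} \le \norm{\langle\nabla\rangle^s F(t')}_{L^2_{x,y}}$. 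Integrating in $t'$ over $[0,T]$ then yields exactly $\norm{\langle\nabla\rangle^s F}_{L^1_T L^2_{x,y}}$, which is the claim.

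The only delicate point — and the one place I expect to have to argue carefully — is the identity $W_r(t-t')F(t') = W_r(t)\,W_r(-t')F(t')$ used above: the composition $W_r(t)\,W_r(-t')$ carries the Fourier multiplier $\chi_r^2$ rather than $\chi_r$. This is harmless, because $\chi_r^2$ is again a smooth bounded function supported in $\mathcal{R}_r$, so the kernel estimates underlying \Cref{thm:maximal3} (built from the $I_{k,j}$ of \eqref{eq:defikj}) hold verbatim with $\chi_r$ replaced by $\chi_r^2$; equivalently, one may fix a slightly fattened cut-off $\widetilde\chi_r$ with $\widetilde\chi_r\,\chi_r=\chi_r$, still supported where the relevant lower bounds on $\pa_\xi\Phi$ and $\pa_\mu\Phi$ hold, and replace $W_r(t)$ by the corresponding $\widetilde W_r(t)$ throughout. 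With this observation the chain of inequalities closes, and the companion Duhamel estimates needed later (for instance for $W_2$ in $L^4_x L^\infty_{T,y}$) follow in the same way from the bounds recorded in \Cref{sec:nostationary} and in the subsection on the case of $W_2$.
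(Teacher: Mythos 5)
Your argument is correct and is essentially the paper's own: Corollary \ref{thm:maximal4} is obtained exactly as Corollary \ref{thm:maximal2}, i.e.\ by the Minkowski inequality, the homogeneous estimate of \Cref{thm:maximal3}, and absorption of the backward propagator in $L^2_{x,y}$. The composition subtlety you flag (the multiplier $\chi_r^2$ versus $\chi_r$) is real but can be dispatched even more simply by taking $\tilde F(t'):=W(-t')F(t')$ with the full unitary flow $W$, so that $W_r(t)\tilde F(t')=W_r(t-t')F(t')$ holds exactly; your fattened-cutoff fix is an equally valid alternative.
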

	
	
	\subsection{The case of $W_0$}
	
	For low frequencies, maximal function estimates can be derived using similar techniques. The corresponding kernel is
	\[ I_{j} (t,x,y) = \int_{\R^2} e^{ix\xi + i y\mu + i t\Phi (\xi,\mu)}\, \alpha_j(\mu)\, \chi_0(\xi)\, d\xi\, d\mu,\]
	where $\chi_0$ was defined in \eqref{eq:cut-offs}.
	Clearly, we have the trivial bound $|I_j(t,x,y)|\lesssim 2^{j}$ after direct integration. Let us explain how to derive estimates in the space $L^4_x L^{\infty}_{T,y}$ as an example, since the case of $L^4_y L^{\infty}_{T,x}$ is analogous.
	
	Following the proof of \Cref{thm:smalltimemaximal}, one may easily use trivial bounds to obtain
	\begin{equation}\label{eq:shortcut}
	 |I_j (t,x,y)|\lesssim |x|^{-1}\, \max\{ t\, 2^{3j},  2^{j}\}.
	\end{equation}
	After interpolating these with the bound $2^{j}$ we obtain
	\[ |I_j (t,x,y)|\lesssim |x|^{-1/2}\, \max\{ t^{1/2}\, 2^{2j},  2^{j}\}.\]
	When $t\leq 2^{-j}$, we directly have the desired bound $|x|^{-1/2} \, 2^{3j/2}$, so we focus on the case $t\geq 2^{-j}$ from now on. We can further restrict ourselves to the case when $|x|\gtrsim t\, 2^{3j}$, given that the opposite situation is dealt with using \eqref{eq:shortcut} directly.
	
 As before, the case of a stationary point is the more complicated one, so we start there. The stationary point is given by \eqref{eq:defmu0}. The analysis carried out in \Cref{sec:stationarypoint} still holds, and thus we have that the top order of $I_{j}$ is given by
		\[ \mathcal{I}_{j} (t,x,y) = \int_{\R} e^{i\phi(\xi)} \, \chi_0 (\xi) \,   \frac{1}{ t^{1/2} \, \left( \frac{1}{4}-\frac{3}{8}\xi\right)^{1/2}} \, d\xi,\]
	where the phase is 
	\[ \phi(\xi)= x\xi + t\, P(\xi)-\frac{y^2}{ 2 t \, \left( \frac{1}{4}-\frac{3}{8}\xi\right)}.\]
	We split this integral into two, given by the regions
	\begin{align*}
	\Omega_1 &  := \{ \xi\in \R \mid |\xi|<100,\ |\xi-\frac{2}{3}|< t \, 2^{3j} \, |x|^{-1}\},\\
	\Omega_2 & := \{ \xi\in \R \mid |\xi|<100,\ |\xi-\frac{2}{3}|\geq t \, 2^{3j} \, |x|^{-1}\}.
	\end{align*}
	The integral over $\Omega_1$, which we will call $\mathcal{I}_j^1$, admits direct integration:
	\[ |\mathcal{I}_{j}^1 (t,x,y)|\leq t^{-1/2}\, \int_{\Omega_1}  \frac{1}{ |\frac{1}{4}-\frac{3}{8}\xi|^{1/2}} \, d\xi \lesssim t^{-1/2} \, (t \, 2^{3j} \, |x|^{-1})^{1/2} = |x|^{-1/2}\, 2^{3j/2}.\]
	For  the integral over $\Omega_2$, named $\mathcal{I}_j^2$, we use the fact that $|\phi'(\xi)|\gtrsim |x|$. Indeed, the first term in $\phi'(\xi)$ has size $|x|$, the second has size $|t\, P'(\xi)|\sim t$, and the latter has size $t\, 2^{2j}$. This final fact follows from the fact that in order to have a stationary point,
	\[ |y|\sim t\, 2^{j}\, \Big |\frac{1}{4}-\frac{3}{8}\xi \Big | .\]
After integrating by parts, and using the fact that $t\geq 2^{-j}$, we obtain
\[ |\mathcal{I}_j^2 (t,x,y)|\lesssim |x|^{-1} \, t^{-1/2} \, \sup_{\xi\in\Omega_2}\, \Big |\frac{1}{4}-\frac{3}{8}\xi \Big |^{-1/2}\lesssim |x|^{-1/2}\, t^{-1} \, 2^{-3j/2} \lesssim |x|^{-1/2}\, 2^{-j/2}.\]

Now suppose that there is no stationary point.  In that case, we rewrite the $\mu$-integral (called $f$) as in \eqref{eq:muintegral}, which yields the bounds
\begin{equation}
\label{eq:paf}
|f(t,\xi,y)| \lesssim 2^j,\qquad|\pa_{\xi} f(t,\xi, y)| \lesssim \frac{2^{j}}{|\frac{1}{4}-\frac{3}{8}\xi|} .
\end{equation}
One can also use integration by parts in $f$, as in \eqref{eq:muintegral}, together with the fact that there is no stationary point to obtain the improved estimate:
\begin{equation}\label{eq:paf2}
 |f(t,\xi,y)|\lesssim \frac{2^{-j}}{t |\frac{1}{4}-\frac{3}{8}\xi|}.
\end{equation}

Then the full integral we wish to estimate is
\[ I_j (t,x,y) = \int_{\R} e^{ix\xi + t P(\xi)}\, f(t,\xi,y)\, \chi_0 (\xi)\, d\xi,\]
which we again divide into two regions
	\begin{align*}
	\Omega_1 &  := \{ \xi\in \R \mid |\xi|<100,\ |\xi-\frac{2}{3}|<2^{j/2} \, |x|^{-1/2}\},\\
	\Omega_2 & := \{ \xi\in \R \mid |\xi|<100,\ |\xi-\frac{2}{3}|\geq 2^{j/2} \, |x|^{-1/2}\},
	\end{align*}
with the corresponding integrals being $I_j^1$ and $I_j^2$.

One can use the bound $|f|\lesssim 2^j$ to obtain:
\[ | I_j^1 (t,x,y)| \lesssim \int_{\Omega_1} \, 2^j \, d\xi = 2^{3j/2}\, |x|^{-1/2}.\]

To deal with $\Omega_2$, we differentiate two cases:
\begin{itemize}
\item When $t\, 2^j \lesssim |x|$, we have that the derivative of the phase of $I_j^2$ satisfies:
\[ | x + t P'(\xi)|\gtrsim |x|.\]
Thus integration by parts, together with estimate \eqref{eq:paf} yields
\[  | I_j^2 (t,x,y)| \lesssim |x|^{-1} \, \int_{\Omega_2}  \frac{2^{j}}{|\frac{1}{4}-\frac{3}{8}\xi|} \, d\xi \lesssim |x|^{-1/2} \, 2^{j/2}.\]
\item When $t\, 2^j \gtrsim |x|$, then we may use the improved bound \eqref{eq:paf2} to obtain
\[ | I_j^2 (t,x,y)|  \lesssim \int_{\Omega_2} \frac{2^{-j}}{t |\frac{1}{4}-\frac{3}{8}\xi|}\, d\xi \lesssim \frac{2^{-j}}{t \,  2^{j/2} \, |x|^{-1/2}}
\lesssim 2^{-j/2} \, |x|^{-1/2}.\]
\end{itemize}
	
	The maximal function estimates for $W_0$ in the space $L^4_y L^{\infty}_{T,x}$ can be derived using similar ideas. We summarize these results in the following: 
	
	\begin{thm}\label{thm:maximal5}
	  For any $s>3/4$,
		\begin{align*}
			\norm{W_0 (t) u_0}_{L^4_x L^{\infty}_{t,y}} + \norm{W_0 (t) u_0}_{L^4_y L^{\infty}_{t,x}} & \lesssim \norm{\langle\nabla\rangle^s \, u_0}_{L^2_{x,y}},\\
			\norm{\int_0^t W_0 (t-t') F(t') \, dt'}_{L^4_x L^{\infty}_{T,y}} +\norm{\int_0^t W_0 (t-t') F(t') \, dt'}_{L^4_y L^{\infty}_{T,x}}  & \lesssim \norm{\langle\nabla\rangle^s F}_{L^1_T L^2_{x,y}}.
		\end{align*}
	\end{thm}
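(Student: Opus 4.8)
The plan is to reduce everything, exactly as in the $TT^\ast$ argument of \Cref{sec:TTarg}, to the two pointwise kernel bounds
\[ |I_j(t,x,y)| \lesssim |x|^{-1/2}\, 2^{3j/2}, \qquad |I_j(t,x,y)| \lesssim |y|^{-1/2}\, 2^{3j/2}, \]
the first uniform in $t,y$ and the second uniform in $t,x$, and then to sum dyadically over the $\mu$-frequency $2^j$ against $\langle\nabla\rangle^s$ for $s>3/4$ (note that only the $\mu$-variable carries a dyadic parameter here, since $\chi_0$ already localizes $\xi$). First I would assemble the $L^4_x L^\infty_{t,y}$ kernel bound from the case analysis carried out just before the statement: for $t\le 2^{-j}$ the trivial bound $|I_j|\lesssim 2^j$ interpolated with \eqref{eq:shortcut} already gives $|x|^{-1/2}2^{3j/2}$; for $t\ge 2^{-j}$ with $|x|\lesssim t\,2^{3j}$ one uses \eqref{eq:shortcut} directly; and for $|x|\gtrsim t\,2^{3j}$ one splits into the stationary-point and non-stationary-point subcases. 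In the stationary case one extracts the top order of $I_j$ via \Cref{thm:oscillatoryasymp} — the rescaling argument of \Cref{sec:stationarypoint} applies verbatim because $|\mu|\sim 2^j$ is the large parameter — and then splits the remaining $\xi$-integral into $\Omega_1=\{|\xi-\tfrac23|<t\,2^{3j}|x|^{-1}\}$, on which the amplitude singularity is integrated in absolute value over a short interval, and $\Omega_2$, on which $|\phi'(\xi)|\gtrsim|x|$ so integration by parts closes; in the non-stationary case the bounds \eqref{eq:paf}--\eqref{eq:paf2} on the $\mu$-integral feed into the same $\Omega_1/\Omega_2$ split. Collecting the cases gives $|I_j|\lesssim|x|^{-1/2}2^{3j/2}$.

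Next, for the $L^4_y L^\infty_{t,x}$ kernel bound I would use the same trade-off device already exploited for $W_1$ and $W_2$ in \Cref{thm:maximal3}: whenever there is a stationary point one has $|y|\sim t\,2^{j}|\tfrac14-\tfrac38\xi|$, which converts every instance of $|x|^{-1/2}$-decay above into $|y|^{-1/2}$-decay at the cost of harmless powers of $2^j$; and when there is no stationary point one applies the non-stationary phase bounds \eqref{eq:paf}--\eqref{eq:paf2} directly. This yields $|I_j|\lesssim|y|^{-1/2}2^{3j/2}$ uniformly in $t,x$.

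Then I would feed each kernel bound through the Hardy--Littlewood--Sobolev step of \Cref{sec:TTarg} (with $\alpha=\tfrac12$), obtaining $\|W_0 Q_j u_0\|_{L^4_x L^\infty_{t,y}}\lesssim 2^{3j/4}\|Q_j u_0\|_{L^2_{x,y}}$ and the analogue in $L^4_y L^\infty_{t,x}$; summing over $j\ge 0$ by Cauchy--Schwarz, the series $\sum_j 2^{(3/4-s)j}$ converges precisely when $s>3/4$, which gives the two homogeneous estimates. Finally the Duhamel estimates follow the template of \Cref{thm:maximal2}: one writes $\int_0^t W_0(t-t')F(t')\,dt'$, inserts a time cut-off, applies the Minkowski inequality in $t'$, moves out a factor $W_0(t)$, invokes the homogeneous estimate, and concludes using that $W_0(-t')$ is unitary on $L^2_{x,y}$.

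The main obstacle is the behaviour near the Hessian-degeneracy point $\xi=\tfrac23$, where $\tfrac14-\tfrac38\xi$ vanishes: the amplitude $(\tfrac14-\tfrac38\xi)^{-1/2}$ produced by the $\mu$-stationary phase becomes singular there, and the constants in \Cref{thm:oscillatoryasymp} a priori depend on a lower bound for the second derivative of the phase. The $\Omega_1/\Omega_2$ decomposition is exactly what handles this — $\Omega_1$ is short enough that the integrable singularity $|\tfrac14-\tfrac38\xi|^{-1/2}$ only costs the admissible factor $|x|^{-1/2}2^{3j/2}$, while on $\Omega_2$ the variable $\xi$ stays away from $\tfrac23$, so both the amplitude and the oscillatory integration by parts (after verifying $|\phi'|\gtrsim|x|$) are controlled uniformly in $t,x,y,j$. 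Making sure all these implicit constants are genuinely uniform in the parameters is the only delicate point; the rest is bookkeeping that parallels the $W_1,W_2$ arguments.
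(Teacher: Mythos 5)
Your proposal follows the paper's proof essentially verbatim: the same dyadic kernel bounds $|I_j|\lesssim |x|^{-1/2}2^{3j/2}$ and $|I_j|\lesssim |y|^{-1/2}2^{3j/2}$ obtained by splitting near the degeneracy $\xi=\tfrac23$ into stationary and non-stationary cases, the same $TT^{\ast}$/Hardy--Littlewood--Sobolev reduction with dyadic summation yielding $s>3/4$, and the same Minkowski-plus-unitarity argument for the Duhamel terms. The one small deviation is that in the non-stationary case the paper does not reuse the stationary-case threshold $|\xi-\tfrac23|< t\,2^{3j}|x|^{-1}$ but re-tunes it to $|\xi-\tfrac23|< 2^{j/2}|x|^{-1/2}$ (a $t$-independent cutoff), which is what prevents an unbounded factor of $t$ when the trivial bound $|f|\lesssim 2^j$ is integrated over $\Omega_1$ and lets the improved bound \eqref{eq:paf2} close the $\Omega_2$ case when $t2^j\gtrsim |x|$; with that adjustment your argument is exactly the paper's.
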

	
	\section{Contraction mapping argument}\label{sec:contraction}
	
	In this section we prove Theorem \ref{thm:maintheorem}. Define projections $P_0$, $P_1$ and $P_2$ in frequency space, corresponding to $\chi_0$, $\chi_1$ and $\chi_2$, respectively. We will also need to define the Fourier multiplier operator $P_3:=\pa_x |\nabla|^{-1}$. For $s>1$ and fixed $T>0$, we define the norms:
	\begin{align*}
		\eta_1 (u) & := \norm{ \langle \nabla\rangle^{s+1} P_1 u}_{L^{\infty}_x L^2_{T,y}}+\norm{ \langle \nabla\rangle^{s+1} P_1\, P_3 u}_{L^{\infty}_x L^2_{T,y}} ,\\
		\eta_2 (u) & := \norm{ \langle \nabla\rangle^{s+1} P_2 u}_{L^{\infty}_y L^2_{T,x}}+\norm{ \langle \nabla\rangle^{s+1} P_2 \, P_3 u}_{L^{\infty}_y L^2_{T,x}} ,\\
		\eta_3 (u) & := \norm{\langle \nabla\rangle^{s} u}_{L^{\infty}_T L^{2}_{x,y}},\\
		\eta_4 (u) & :=   \sum_{i=0}^2 \norm{\langle \nabla\rangle^{1/4} P_i u}_{L^{4}_x L^{\infty}_{T,y}},\\
		\eta_5 (u) & :=  \sum_{i=0}^2 \norm{\langle \nabla\rangle^{1/4} P_i u}_{L^{4}_y L^{\infty}_{T,x}},\\
		\eta_6 (u) & := \norm{\langle \nabla\rangle^{s} P_1 u}_{L^q_T L^r_{x,y}}+  \norm{\langle \nabla\rangle^{s} P_2 u}_{L^q_T L^r_{x,y}}
	\end{align*}
	for $(q,r)$ very close to $(2,\infty)$ as given in \Cref{thm:Strichartz2}.
	
	We define $\Lambda_T (u):=\max_{j=1,\ldots,6} \eta_j(u)$ and consider the space
	\[ X_T^s := \{ u \in L^{\infty}_T H^s_{x,y} \mid \Lambda (u)<\infty\}.\]
	We will consider a ball in this space
	\[ B_R= \{ u\in X_T^s \mid \Lambda(u)<R\}\]
	for some $R>0$ to be decided later.
	
	Let us recall the precise nonlinearity in the Dysthe equation:
	\[ N(u)= -\frac{i}{2} |u|^2\, u - \frac{3}{2} |u|^2 \, \pa_x u - \frac{1}{4} u^2 \,\pa_x \overline{u} + \frac{i}{2}\, u\, \pa_x^2 \, |\nabla|^{-1} (|u|^2).\]
	We define the functional
	\begin{equation*}
	 \Psi (u)(t) := W(t) u_0  + \int_0^t W(t-t')N(u(t'))\, dt',
	 \end{equation*}
	 so that a fixed point of $\Psi$ is the solution we seek by the Duhamel formula. Our goal is to show that $\Psi: B_R \rightarrow B_R$ (for some $T$ small enough) and that this mapping is Lipschitz.

	By \Cref{2DDystheStrichartzPropLarge}, 
	\Cref{thm:linearsmoothing}, 
 	\Cref{thm:maximal}, \Cref{thm:maximal2},  
 	\Cref{thm:maximal3}, \Cref{thm:maximal4},
 	and \Cref{thm:maximal5} 
	we have that
	\begin{align}
		\eta_j (\Psi (v)) & \lesssim \norm{\langle \nabla\rangle^s u_0}_{L^2_{x,y}} + \norm{\langle\nabla\rangle^s N(u)}_{L^1_T L^2_{x,y}} \nonumber \\
		& \lesssim  \norm{\langle \nabla\rangle^s u_0}_{L^2_{x,y}} + T^{1/2}\, \norm{\langle\nabla\rangle^s N(u)}_{L^2_{T,x,y}}.\label{eq:contraction}
	\end{align}
	
	Since $N(u)$ is a sum of four terms, let us start by considering the term $|u|^2\, \pa_x u$.  We write
		\begin{multline*}
		\langle\nabla\rangle^s ( |u|^2\, \pa_x u) =  (\langle\nabla\rangle^s \pa_x u )\, |u|^2+ (\langle\nabla\rangle^s |u|^2)\, \pa_x u\\
		+ \left[ \langle\nabla\rangle^s ( |u|^2\, \pa_x u)-  (\langle\nabla\rangle^s |u|^2)\, \pa_x u- (\langle\nabla\rangle^s \pa_x u )\, |u|^2\right] = I + I\! I + I\! I\! I.
	\end{multline*}
	
	We need to control each of these three terms in terms of the $\eta_j$'s.
	\begin{enumerate}
		\item We decompose 
		\[ I=(\langle\nabla\rangle^s \pa_x u )\, |u|^2=\sum_{i=0}^2  (\langle\nabla\rangle^s \pa_x P_i u )\, |u|^2\]
		and treat each summand separately. First, we use the Holder inequality:
		\[ \norm{(\langle\nabla\rangle^s \pa_x P_1 u )\, |u|^2}_{L^2_{T,x,y}}\lesssim \eta_1(u)\, \eta_4 (u)^2.\]
		Similarly, 
		\[ \norm{(\langle\nabla\rangle^s \pa_x P_2 u )\, |u|^2}_{L^2_{T,x,y}}\lesssim \eta_2(u)\, \eta_5 (u)^2.\]
		Finally, for $P_0$ we use \Cref{thm:lowfreq} as follows:
		\begin{align*}
			\norm{(\langle\nabla\rangle^s \pa_x P_0 u )\, |u|^2}_{L^2_{T,x,y}} & \lesssim \norm{\langle\nabla\rangle^s \pa_x P_0 u}_{L^{\infty}_x L^2_{T,y}} \, \eta_4 (u)^2\\
			& \lesssim \norm{\langle\nabla\rangle^s P_0 u}_{L^2_{T,x,y}}\, \eta_4 (u)^2 \lesssim T^{1/2}\, \eta_3(u)\, \eta_4 (u)^2.
		\end{align*}
		\item Now we consider the term $ I\! I= (\langle\nabla\rangle^s |u|^2)\, \pa_x u$. First we use the Holder inequality:
		\[ \norm{(\langle\nabla\rangle^s |u|^2)\, \pa_x u}_{L^2_{T,x,y}}\lesssim \norm{\langle\nabla\rangle^s |u|^2}_{L^{\infty}_T L^2_{x,y}} \, \norm{\pa_x u}_{L^2_T L^{\infty}_{x,y}}.\]
		For the first factor, we use \Cref{thm:chainrule} and the Sobolev embedding theorem:
		\[  \norm{\langle\nabla\rangle^s |u|^2}_{L^{\infty}_T L^2_{x,y}}\lesssim T^{1/2}\, \norm{u}_{L^{\infty}_{T,x,y}} \, \norm{\langle\nabla\rangle^s u}_{L^{\infty}_T L^2_{x,y}}\lesssim T^{1/2} \, \eta_3(u)^2. \]
		For the second factor, we use \Cref{2DDystheStrichartzPropLarge} to control the terms in $P_1$ and $P_2$, and \Cref{thm:lowfreq} for the term in $P_0$:
		\begin{equation}
		\label{eq:example}
			\norm{\pa_x u}_{L^2_T L^{\infty}_{x,y}}  \leq  \norm{\pa_x P_0 u}_{L^2_T L^{\infty}_{x,y}} +  \sum_{i=1}^2 \norm{\pa_x P_i u}_{L^2_T L^{\infty}_{x,y}}  \lesssim \eta_3 (u) + \eta_6 (u).
		\end{equation}
		\item Finally, let us study the error term $I\! I\! I$. By the fractional Leibniz rule, \Cref{thm:Leibnizrule}, we have:
		\[ \norm{I\! I\! I}_{L^2_{x,y}} \lesssim \norm{\langle\nabla\rangle^s |u|^2}_{L^2_{x,y}} \, \norm{\pa_x u}_{L^{\infty}_{x,y}}.\]
		We may then use the Holder inequality for the $L^2_T$ norm to reduce this to case (2).
	\end{enumerate}
	
This shows how to handle the nonlinear term $|u|^2\, \pa_x u$. The term $u^2 \, \pa_x \bar{u}$ is analogous, and the term $|u|^2\, u$ is trivial, so we skip them. 

We now focus on the term $u\, \pa_x^2 \, |\nabla|^{-1} ( |u|^2)$. Recall that we defined $P_3=\pa_x |\nabla|^{-1}$, which is a pseudo-differential operator of order zero, and thus maps $L^p_{x,y}$ to $L^p_{x,y}$ continuously for any $1<p<\infty$ (by the H\"ormander-Mikhlin multiplier theorem). We can also write 
\[ u\, P_3\, \pa_x ( |u|^2) = u\, P_3\, \pa_x( u \, \bar{u})= u\, P_3\left( \pa_x u \, \bar{u} + u\, \pa_x \bar{u}\right).\]
The last two terms admit a similar treatment and therefore we will discuss only the case of $u\, P_3 ( \pa_x u \, \bar{u} )$.

We go back to \eqref{eq:contraction} and plug in this term. We need to estimate the $L^2_{T,x,y}$ norm of $\langle \nabla\rangle^s [u\, P_3 ( \pa_x u \, \bar{u} )]$. Once again, we decompose:
	\begin{multline*}
		\langle \nabla\rangle^s [u\, P_3 ( \pa_x u \, \bar{u} )] =  (\langle\nabla\rangle^s u )\, P_3 ( \pa_x u \, \bar{u} )+ u\, \langle\nabla\rangle^s\,P_3 ( \pa_x u \, \bar{u} )\\
	 + \left[\langle \nabla\rangle^s [u\, P_3 ( \pa_x u \, \bar{u} )] -  (\langle\nabla\rangle^s u )\, P_3 ( \pa_x u \, \bar{u} )- u\, \langle\nabla\rangle^s\,P_3 ( \pa_x u \, \bar{u} )\right] = I + I\! I + I\! I\! I.
	\end{multline*}
We estimate each of these terms separately:
	\begin{enumerate}
	\item By the Holder inequality, the Sobolev embedding theorem and the fact that $P_3$ maps $L^r_{x,y}$ to $L^r_{x,y}$ continuously,
	\begin{align*}
	 \norm{I}_{L^2_{T,x,y}} & \lesssim \eta_3 (u)\, \norm{  P_3 ( \pa_x u \, \bar{u} )}_{L^2_T L^{\infty}_{x,y}} \lesssim \eta_3 (u) \, \norm{ \langle\nabla\rangle^{\varepsilon} P_3 ( \pa_x u \, \bar{u} )}_{L^2_T L^{r}_{x,y}}\\
	& \lesssim \eta_3 (u) \, \norm{ \langle\nabla\rangle^{\varepsilon} ( \pa_x u \, \bar{u} )}_{L^2_T L^{r}_{x,y}}.
	\end{align*}
	Now we write:
	\begin{multline*}
	  \langle\nabla\rangle^{\varepsilon} ( \pa_x u \, \bar{u} )  =  (\langle\nabla\rangle^{\varepsilon}  \pa_x u) \, \bar{u} + \pa_x u\, (\langle\nabla\rangle^{\varepsilon}  \bar{u}) \\
	 + [  \langle\nabla\rangle^{\varepsilon} ( \pa_x u \, \bar{u} )  - (\langle\nabla\rangle^{\varepsilon}  \pa_x u) \, \bar{u} - \pa_x u\, (\langle\nabla\rangle^{\varepsilon}  \bar{u})] = I_1 + I_2 + I_3.
	\end{multline*}
	\begin{enumerate}
	\item To control the first term we use the Holder inequality and the Sobolev embedding theorem:
	\[ \norm{I_1}_{L^2_T L^r_{x,y}} \lesssim \norm{\bar{u}}_{L^{\infty}_{T,x,y}} \, \norm{ \langle\nabla\rangle^{\varepsilon}  \pa_x u}_{L^2_T L^r_{x,y}}\lesssim \eta_3 (u)\, \norm{ \langle\nabla\rangle^{\varepsilon}  \pa_x u}_{L^2_T L^r_{x,y}}.\]
	The last factor is bounded by $T^{0+}\,\left( \eta_3(u) + \eta_6 (u)\right)$ after breaking it up using $P_i$ ($i=0,1,2$) and \Cref{thm:lowfreq}.
	\item The second term is analogous. Indeed,
	\[ \norm{I_2}_{L^2_T L^r_{x,y}} \lesssim \norm{\pa_x u}_{L^q_2 L^r_{x,y}} \, \norm{\langle\nabla\rangle^{\varepsilon} \bar{u}}_{L^{\infty}_{T,x,y}},\]
	and one proceeds as with $I_1$.
	\item The error term, $I_3$, admits the same control as $I_1$ after using the fractional Leibniz rule (\Cref{thm:Leibnizrule}).
	\end{enumerate}
	
	\item Regarding $I\! I$, we decompose:
	\begin{multline*}
	I\! I  = |u|^2 \, (\langle\nabla\rangle^s \, P_3 \pa_x u) + u \,(\langle\nabla\rangle^s\bar{u})  \, ( P_3 \, \pa_x u) \\
	 + \left[ u\, \langle\nabla\rangle^s\,P_3 ( \pa_x u \, \bar{u} ) - |u|^2 \, (\langle\nabla\rangle^s P_3 \, \pa_x u)-  u \,(\langle\nabla\rangle^s\bar{u})  \, ( P_3\, \pa_x u) \right] = I\! I_1 + I\! I_2 + I\! I_3.
	\end{multline*}
	We treat each case separately:
	\begin{enumerate}
	\item We write 
	\[ \langle\nabla\rangle^s \,P_3 \pa_x u = \sum_{i=0}^2 P_i\, \langle\nabla\rangle^s \, P_3 \, \pa_x u.\]
	The terms in $P_1$ and $P_2$ are analogous so we do only one. By the Holder inequality,
	\[ \norm{|u|^2 \, (P_1\, \langle\nabla\rangle^s \, P_3 \pa_x u)}_{L^2_{T,x,y}}\lesssim \eta_4 (u)^2 \, \eta_1(u).\]
	The term in $P_0$ is controlled with $\eta_3(u)$ thanks to \Cref{thm:lowfreq} (whose proof is identical when including $P_3$).
	\item We control this term using the Holder inequality:
	\begin{align*}
	 \norm{I\! I_2}_{L^2_{T,x,y}} & \lesssim \norm{u}_{L^{\infty}_{T,x,y}}\, \norm{\langle\nabla\rangle^s u}_{L^{\infty}_T L^2_{x,y}}\, \norm{P_3 \, \pa_x u}_{L^2_T L^{\infty}_{x,y}}\\
	& \lesssim \eta_3 (u)^2\, \norm{P_3 \, \pa_x u}_{L^2_T L^{\infty}_{x,y}}.
	\end{align*}
	The last factor can be controlled as in \eqref{eq:example}.
	\item We first use the H\"older inequality to write:
	\[ \norm{I\! I_3}_{L^2_{T,x,y}} \lesssim \norm{u}_{L^{\infty}_{T,x,y}}\, \norm{\mbox{error}}_{L^2_{T,x,y}}.\]
	The error term is absolutely analogous to case $I\! I_2$ after using the fractional Leibniz rule, \Cref{thm:Leibnizrule}.
	\end{enumerate}
	\item This final error term, $I\! I \! I$, is handled as $I$ thanks to \Cref{thm:Leibnizrule} again.
	\end{enumerate}
	
	Back to \eqref{eq:contraction}, these arguments show that 
	\[ \Lambda ( \Psi (u)) \lesssim \norm{\langle\nabla\rangle^s u_0}_{L^2_{x,y}} + T^{1/2}\, \langle T\rangle^{0+}\, \Lambda (u)^3.\]
	Recall that we are working with $u$ in the ball $B_R$ of functions with $\Lambda (u)\leq R$. One can choose $R$ large enough so that $\norm{\langle\nabla\rangle^s u_0}_{L^2_{x,y}}\leq \frac{R}{2}$ and take $T$ small enough to guarantee that $C\,T^{1/2}\, \langle T\rangle^{0+} \, R^3\leq \frac{R}{2}$. With this choice, $\Psi$ maps $B_R$ to $B_R$.
	
	One can use similar ideas to prove that $\Psi$ is contraction, i.e. 
	\[ \Lambda \left( \Psi(u)-\Psi (v) \right) \lesssim T^{1/2}\, \norm{\langle\nabla\rangle^s [ N(u) - N(v)]}_{L^2_{T,x,y}}\lesssim T^{1/2}\, \langle T\rangle^{0+} \, R^2\, \Lambda (u-v).\]
	One can choose a smaller $T$, if necessary, to guarantee that $C\,T^{1/2}\, \langle T\rangle^{0+} \, R^2<1$. This finishes the proof.

\section{Ill-posedness}\label{sec:illposedness} 

\subsection{Main idea} 

In this section we prove \Cref{thm:illposedness}. This is a mild form of ill-posedness, which first appeared in the work of Bourgain \cite{Bourgain}. The intuition behind this idea is the following: consider the Dysthe equation \eqref{eq:Dysthe} for perturbed initial data $\epsilon\, u_0$. We may then write an asymptotic expansion for the solution to this problem, $u_{\epsilon}$, in powers of $\epsilon$:
\[ u_{\epsilon}= \epsilon u_1+\epsilon^2 u_2 + \ldots \]
By plugging this into \eqref{eq:Dysthe} and matching the coefficients of the powers of $\epsilon$, one can write explicit equations for $u_1$, $u_2$, etc. In particular, one finds that $u_1=W(t)u_0$, $u_2=0$ and $u_3$ solves the equation:
\begin{equation*}
\left\{
    \begin{array}{ll}
    \pa_t u_3 + L(u_3) = N(u_1),\\
    u_3|_{t=0} =0,
    \end{array}
\right.
\end{equation*}
for $L$ and $N$ as in \eqref{eq:Dysthe}. This formal procedure is equivalent to considering the first nontrivial term of a Picard iterative scheme for the Dysthe equation \eqref{eq:Dysthe}, which would be
\begin{equation}\label{eq:Picarditeration}
 W(t) u_0 + \int_0^t W(t-t') N( W(t')u_0)\, dt' = u_1 (t) + u_3 (t).
\end{equation}

Our goal in this section is to show that the operator that maps initial data $u_0$ to $u_3$ is not continuous from $H^s (\R^2)$ to $C([0,T],H^s (\R^2))$ for $s<0$, no matter  how small $T$ is. Note that the existence of $u_3$ and some small enough $T$ is guaranteed by estimates similar to those used in the proof of \Cref{thm:maintheorem}. The lack of continuity of the map from $u_0$ to $u_3$ is equivalent to the fact that the map from $u_0\in H^s(\R^2)$ to the solution to the Dysthe equation \eqref{eq:Dysthe} in $L^{\infty}([0,T],H^s(\R^2))$ is not $C^3$. Moreover, this means that any attempt to prove local well-posedness for the Dysthe equation based on an iterative scheme in $H^s (\R^2)$ like the one described in \eqref{eq:Picarditeration} must necessarily fail.

Let us briefly discuss some motivation to justify why we obtain ill-posedness for $s<0$. As explained before, one issue with the Dysthe equation is the lack of scaling symmetry, so we cannot technically talk about a critical regularity $s_c$ that is invariant under rescaling. When such symmetry is available, the connection between the criticality of the problem and scaling is the following: 
\begin{itemize}
\item in the subcritical case $s>s_c$, we expect high frequencies to evolve linearly for all times, while the low-frequencies will evolve linearly for small times and nonlinearly for large times.
\item in the supercritical case $s<s_c$, high frequencies are unstable and develop nonlinear behavior in short times.
\end{itemize}
See Principle 3.1 in Tao's book \cite{tao} and the discussion that follows for more details. Despite the lack of scaling symmetry, the same heuristics can be applied to our equation: we expect the largest contribution to high-frequencies to come from the terms in \eqref{eq:Dysthe} involving the largest number of derivatives. Therefore, a reasonable model to understand the behavior of large frequencies (at least for short times) might be the following PDE:
\[\left\{
    \begin{array}{ll}
    \pa_t u -\frac{1}{16} \pa_x^3 u + \frac{3}{8}\pa_x\pa_y^2 u =  - \frac{3}{2} |u|^2 \, \pa_x u - \frac{1}{4} u^2 \,\pa_x \overline{u} + \frac{i}{2}\, u\, \pa_x^2 |\nabla|^{-1} (|u|^2),\\
    u|_{t=0} =u_0,
    \end{array}
\right.\]
This PDE does enjoy a scaling symmetry and its critical regularity under it is precisely $s_c=0$. In the next section, we will see that the terms in this equation constitute the top order of our approximation to $u_3$ for short times, which might explain the range of $s$ in \Cref{thm:illposedness}

\subsection{Computations}

Consider initial data $u_{0,N}$ with small support around some high frequency $N=(N_1,N_2)\in \R^2$. In particular, consider 
\[  \widehat{u}_{0,N}(\xi,\mu)= c(N)\,  f(\xi) \, g(\mu)\]
where
\begin{itemize}
\item $f$ is an odd function such that $f=0$ outside the interval $[N_1 - N_1^{2 \varepsilon} , N_1 +N_1^{2 \varepsilon}]$ for some $\varepsilon<1/2$, and $f=1$ in the interval $[N_1 - \frac{1}{2}\, N_1^{2 \varepsilon} , N_1 +\frac{1}{2}\, N_1^{2 \varepsilon}]$.
\item $g$ is an even function such that $0\leq g\leq 1$, and with similar properties in the interval $[N_2 - N_2^{2 \varepsilon} , N_2 + N_2^{2 \varepsilon}]$.
\item $c(N)$ is a coefficient chosen to normalize the $H^s_{x,y}$-norm of $u_{0,N}$, i.e.
\[ c(N):= C\, N_1^{-\varepsilon} \,  N_2^{-\varepsilon} \, (\max\{N_1^s, N_2^s\})^{-1}.\]
\end{itemize}

We first consider the linear flow $ u_{1,N}= W(t) u_{0,N}$. Our goal is to approximate the function
\begin{equation}
\label{eq:defu3}
 u_{3,N}(t,x,y) = \int_0^t W(t-t') \, N(u_{1,N}(t') )\, dt'.
 \end{equation}

The main argument will be as follows: in order for the map from $u_0$ to $u_3$ to be continuous we need
\[ \norm{u_{0,N}}_{H^s_{x,y}} \gtrsim \norm{u_{3,N}}_{L^{\infty}([0,t], H^s_{x,y})}\gtrsim \norm{\langle\xi\rangle^s \widehat{u}_{3,N}}_{L^{\infty}([0,t], L^2(\mathcal{R}_N))}.\]
Here, $\mathcal{R}_N$ is a region where we will be able to approximate $\widehat{u}_{3,N}$ accurately.

First of all, we compute the linear flow for our choice of initial data:
\begin{equation}\label{eq:firstiteration}
 \widehat{u}_{1,N}(t,\xi,\mu) = e^{-it\, \Phi (\xi,\mu)}\, c(N)\, f(\xi) \, g(\mu).
\end{equation}
Now we wish to compute $N(u_{1,N})$. As an example, let us start by computing the contribution of the term $|u_{1,N}|^2\, \pa_x u_{1,N}$. Recall that
\[ (|u|^2 \pa_x u)^{\wedge}= \widehat{u} \ast \widehat{\bar{u}}\ast (-i\,\xi\, \widehat{u}).\]
Using this, together with \eqref{eq:firstiteration}, we may write:
%
%
\begin{align}\label{eq:firstiteration2}
(|u_{1,N}|^2 \pa_x u_{1,N})^{\wedge}(t,\xi,\mu) & = i\,c(N)^{3}\, \int_{\R^4}  e^{-it \, \Omega}\, (\xi-\xi_1+\xi_2) \,f(\xi-\xi_1+\xi_2) \, g(\mu-\mu_1+\mu_2)  \\
& \hspace{3cm} f(\xi_1) \, g(\mu_1) \,  f(\xi_2) \, g(\mu_2)\, d\xi_2 \, d\mu_2\, d\xi_1\,d\mu_1,\nonumber
\end{align}
where 
\begin{equation*}
\Omega = \Phi (\xi-\xi_1+\xi_2,\mu-\mu_1+\mu_2) +\Phi (\xi_1,\mu_1) - \Phi (\xi_2,\mu_2).
\end{equation*}

We take its Fourier transform of \eqref{eq:defu3} and compute the $t'$-integral explicitly using \eqref{eq:firstiteration2}:
\begin{align}\label{eq:firstiteration3}
\widehat{u}_{3,N}(t,\xi,\mu) & = i\,e^{-it\Phi(\xi,\mu)}\, c(N)^{3}\, \int_{\R^4}  \frac{e^{-it \, \widetilde{\Omega}}-1}{-i\widetilde{\Omega}} \, (\xi-\xi_1+\xi_2) \,f(\xi-\xi_1+\xi_2) \, g(\mu-\mu_1+\mu_2) \\
&  \hspace{2cm}  f(\xi_1) \, g(\mu_1) \,  f(\xi_2) \, g(\mu_2)\, d\xi_2 \, d\mu_2\, d\xi_1\,d\mu_1 + \mbox{other terms},\nonumber
\end{align}
where
\begin{equation}\label{eq:Omega}
\widetilde{\Omega} = \Phi (\xi-\xi_1+\xi_2,\mu-\mu_1+\mu_2)-\Phi(\xi,\mu) +\Phi (\xi_1,\mu_1) - \Phi (\xi_2,\mu_2).
\end{equation}

We now develop a rigorous approximation to \eqref{eq:firstiteration3} for short times.

\begin{lem}\label{thm:approx1} Suppose that $|t\widetilde{\Omega}|<1/4$ and that $(\xi,\mu)\in B(N_1, \frac{1}{4} N_1^{2\varepsilon}) \times B(N_2, \frac{1}{4} N_2^{2\varepsilon})$. Then for large enough $N$, we have that
\begin{equation*}
 \widehat{u}_{3,N}(t,\xi,\mu)  = -i\,\frac{7}{4}\, c(N)^{3}\, t\,  e^{-it\Phi(\xi,\mu)}\, F(\xi)\, G(\mu)  + \mbox{error},
 \end{equation*}
where 
\begin{align*}
F(\xi) & = \int_{\R^2} (\xi-\xi_1+\xi_2) \,f(\xi-\xi_1+\xi_2)\, f(\xi_1) \,  f(\xi_2)\, d\xi_1 \, d\xi_2,\\
G(\mu) & = \int_{\R^2}  g(\mu-\mu_1+\mu_2) \, g(\mu_1) \, g(\mu_2)\, d\mu_1 \, d\mu_2.\\
\end{align*}
\end{lem}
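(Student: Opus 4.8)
The plan is to substitute the exact formula \eqref{eq:firstiteration3} for the contribution of $-\frac32|u|^2\pa_x u$ to $\widehat{u}_{3,N}$, together with the three analogous formulas coming from $(|u|^2u)^{\wedge}=\widehat u\ast\widehat{\overline u}\ast\widehat u$, $(u^2\pa_x\overline u)^{\wedge}=\widehat u\ast\widehat u\ast\widehat{\pa_x\overline u}$, and $(u\,\pa_x^2|\nabla|^{-1}(|u|^2))^{\wedge}=\widehat u\ast\big[\tfrac{-\xi'^2}{|\zeta'|}(\widehat u\ast\widehat{\overline u})\big]$, and then in each of them to replace the Duhamel factor $\frac{e^{-it\widetilde\Omega}-1}{-i\widetilde\Omega}$ by the constant $t$. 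On the support of every integrand the frequencies $\xi,\xi_1,\xi_2$ (resp. $\mu,\mu_1,\mu_2$) lie within $\O(N_1^{2\varepsilon})$ of $\pm N_1$ (resp. within $\O(N_2^{2\varepsilon})$ of $\pm N_2$); since $\widetilde\Omega$ in \eqref{eq:Omega} vanishes when all the frequencies coincide, the algebraic identity $(a-b+c)^3-a^3+b^3-c^3=3(a+c)(a-b)(c-b)$ together with its mixed analogue for the $\xi\mu^2$ part of $\Phi$ gives $|\widetilde\Omega|\lesssim\max\{N_1,N_2\}^{2+2\varepsilon}$ there. Writing $z=-t\widetilde\Omega$ and using $\big|\frac{e^{iz}-1}{iz}-1\big|\le|z|$ for $|z|\le\frac14$, the hypothesis $|t\widetilde\Omega|<\frac14$ yields $\frac{e^{-it\widetilde\Omega}-1}{-i\widetilde\Omega}=t+R$ with $|R|\le t^2|\widetilde\Omega|\le\frac14 t$ pointwise on that support.

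After the replacement the overall phase $e^{-it\Phi(\xi,\mu)}$ factors out and each integrand becomes a product of a function of $(\xi_1,\xi_2)$ and a function of $(\mu_1,\mu_2)$, so the integral splits as $t\cdot(\text{a $\xi$-integral})\cdot(\text{a $\mu$-integral})$. The $\mu$-integral is in every case $G(\mu)$ up to a relative error $\O(N_2^{2\varepsilon-1})$ coming from freezing the $\mu$-dependent parts of the symbols at $\pm N_2$. For the $\xi$-integral, the symbol of $|u|^2\pa_x u$ is exactly $i(\xi-\xi_1+\xi_2)$ and produces $iF(\xi)$; the symbols of $u^2\pa_x\overline u$ and $u\,\pa_x^2|\nabla|^{-1}(|u|^2)$, once $|u|^2$ is expanded and $\widehat{\overline u}(\zeta)=\overline{\widehat u(-\zeta)}$ is used together with the parity of $f$ (odd) and $g$ (even), differ from a fixed constant multiple of $i(\xi-\xi_1+\xi_2)$ only by terms of size $\O(N_1^{2\varepsilon})$; in particular the homogeneous degree-one symbol $-\xi'^2/|\zeta'|$ of $\pa_x^2|\nabla|^{-1}$ is evaluated only on the doubled-frequency region $|\zeta'|\sim N_1$, away from its singularity at $\zeta'=0$. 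Hence each of the three genuinely derivative-carrying terms contributes a constant multiple of $iF(\xi)G(\mu)$, and collecting the prefactors $-\frac32$, $-\frac14$, $+\frac i2$ with the factors of $i$ from the Fourier transforms of the $x$-derivatives and the sign changes from the conjugations yields the stated coefficient $-i\frac74$.

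The error term in the lemma is then the sum of: the $R$-contribution, which is at most $\frac14$ of the main term by hypothesis; the entire contribution of the derivative-free term $-\frac i2|u|^2u$, which carries no weight of size $N_1$ and is therefore $\O(N_1^{-1})$ relative to the main term; the part of the $u\,\pa_x^2|\nabla|^{-1}(|u|^2)$ contribution supported where $|\zeta'|\lesssim N_1^{2\varepsilon}$, again $\O(N_1^{2\varepsilon-1})$ relative; and the freezing errors above. Since $\varepsilon<\frac12$, all of these are $o(1)$ times the main term as $N\to\infty$, which is the assertion. One checks along the way that $F(\xi)$ and $G(\mu)$ are genuinely of size $N_1^{1+4\varepsilon}$ and $N_2^{4\varepsilon}$ on the central box, so that the main term really is the dominant one.

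The hardest part is the term $u\,\pa_x^2|\nabla|^{-1}(|u|^2)$: one must track its trilinear convolution structure carefully, verify that the Fourier variable $\zeta'$ fed into $-\xi'^2/|\zeta'|$ never lands on the singular set $\{\zeta'=0\}$ on the part of the support that contributes at order $N_1$ (it lands instead where $|\zeta'|\sim N_1$), and confirm that its contribution combines constructively with those of the two cubic derivative terms so as to produce $\frac74$ rather than some smaller constant. Throughout, the real labor is bookkeeping: keeping every implicit constant uniform in $N$ and in $t$ within the range $|t\widetilde\Omega|<\frac14$, which is precisely why the statement is confined to the box $B(N_1,\frac14 N_1^{2\varepsilon})\times B(N_2,\frac14 N_2^{2\varepsilon})$, a factor $\frac14$ inside the support of the data, where all the symbols are essentially constant.
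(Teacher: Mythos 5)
Your overall strategy (Taylor-expand the Duhamel factor $\frac{e^{-it\widetilde\Omega}-1}{-i\widetilde\Omega}=t+R$ with $|R|\lesssim t^2|\widetilde\Omega|\lesssim t$, factor the main term as $t\,e^{-it\Phi(\xi,\mu)}$ times a $\xi$-integral times a $\mu$-integral, and push everything else into the error) is the same as the paper's. But your treatment of the nonlocal term $u\,\pa_x^2|\nabla|^{-1}(|u|^2)$ contains a genuine error, and it is not a cosmetic one: you claim that the symbol $-\xi'^2/|\zeta'|$ of $\pa_x^2|\nabla|^{-1}$ is evaluated ``only on the doubled-frequency region $|\zeta'|\sim N_1$'' and that this term ``combines constructively'' with the two cubic derivative terms ``so as to produce $\frac74$.'' This is inconsistent with the statement you are proving: the coefficients $-\frac32$ and $-\frac14$ of $|u|^2\pa_x u$ and $u^2\pa_x\overline u$ already sum to $\frac74$, so if the $|\nabla|^{-1}$ term contributed a nonzero constant multiple of $iF(\xi)G(\mu)$ at leading order, the constant in the lemma could not be $-\frac74 i$. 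In the paper this term goes entirely into the error: the argument of the symbol is $\zeta'=(\xi-\xi_1,\mu-\mu_1)$, i.e.\ the frequency of $|u|^2=u\,\overline u$, and on the part of the support that drives the leading order (output frequency $\xi\in B(N_1,\frac14N_1^{2\varepsilon})$, all $f$-arguments near $N_1$) one has $|\xi-\xi_1|\lesssim N_1^{2\varepsilon}$, hence
\[
\frac{(\xi-\xi_1)^2}{[(\xi-\xi_1)^2+(\mu-\mu_1)^2]^{1/2}}\le |\xi-\xi_1|\lesssim N_1^{2\varepsilon},
\]
so the corresponding kernel $F^{\ast}(\xi,\mu)$ is $\O(N_1^{6\varepsilon})\,G(\mu)$, which is negligible against $F(\xi)G(\mu)\sim N_1^{1+4\varepsilon}G(\mu)$ precisely because $\varepsilon<\frac12$. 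In other words, the relevant frequency of $|u|^2$ is the \emph{near-zero} one, not a doubled one of size $N_1$; your claim gets this backwards and, if it were true, it would destroy the stated constant rather than confirm it.

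Two smaller points. First, the way you obtain the relative sign/size of the $u^2\pa_x\overline u$ contribution (``freezing the symbols at $\pm N_1$ up to $\O(N_1^{2\varepsilon})$'') is vaguer than what is actually needed: the paper uses the exact parity identities forced by $f$ odd and $g$ even, namely $H(\xi)=-F(\xi)$ and $S(\mu)=G(\mu)$, to see that this term reinforces (rather than cancels) the $|u|^2\pa_x u$ term and produces $\frac32+\frac14=\frac74$; a purely frozen-symbol heuristic does not by itself rule out cancellation of the two leading terms, which is the whole point of choosing $f$ odd. Second, when you bound the $R$-contribution by ``$\frac14$ of the main term,'' note that the Taylor-remainder bound involves $|\xi-\xi_1+\xi_2|$ rather than $(\xi-\xi_1+\xi_2)$, so you need the sign information that $\xi-\xi_1+\xi_2>0$ on the region you keep (this is the content of \Cref{rk:positivity} in the paper); you gesture at the localization but never state this positivity, and without it the comparison of the remainder to the main term is not justified.
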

\begin{proof}
{\bf Step 1.} First we use a Taylor expansion:
\[ \frac{e^{-it \, \widetilde{\Omega}}-1}{-i\widetilde{\Omega}}  = \frac{1}{-i\widetilde{\Omega}}\, \sum_{k=1}^{\infty} \frac{(-it \, \widetilde{\Omega})^{k}}{k!}= t + \sum_{k=2}^{\infty} \frac{(-t)^k \, (-i\,\widetilde{\Omega})^{k-1}}{k!}= t + \O ( t^2\, |\widetilde{\Omega}|).\]
since
\[ \Big | \sum_{k=2}^{\infty} \frac{(-t)^k \, (-i\,\widetilde{\Omega})^{k-1}}{k!} \Big | \leq t\, \sum_{k=1}^{\infty} |t\widetilde{\Omega}|^{k} \leq t\, \frac{|t\widetilde{\Omega}|}{1- |t\widetilde{\Omega}|}\leq \frac{4}{3}\, t^2\, |\widetilde{\Omega}|<\frac{1}{3} t.\]

Consequently, the contribution from the term $|u|^2\, \pa_x u$ to \eqref{eq:firstiteration3} can be rewritten as
\[i\,c(N)^{3}\, t\,  e^{-it\Phi(\xi,\mu)}\, F(\xi)\, G(\mu) + R(t,\xi,\mu),\]
where
\begin{align*}
 |R(t,\xi,\mu)| & \leq  \frac{4}{3}\, c(N)^{3}\, t^2 \, \int_{\R^4} |\widetilde{\Omega}| \, |\xi-\xi_1+\xi_2| \,f(\xi-\xi_1+\xi_2)\, f(\xi_1) \, f(\xi_2)\\
& \hspace{3cm} g(\mu-\mu_1+\mu_2) \, g(\mu_1) \, g(\mu_2)\, d\mu_1 \, d\mu_2\, d\xi_1 \, d\xi_2.
 \end{align*}
Using the fact that $|t\widetilde{\Omega}|<\frac{1}{4}$ we have that
 \[ |R(t,\xi,\mu)|\leq \frac{1}{3} \, t \, c(N)^3\, \tilde{F}(\xi)\, G(\mu),\]
 where
 \[ \widetilde{F}(\xi) := \int_{\R^2} |\xi-\xi_1+\xi_2| \,f(\xi-\xi_1+\xi_2)\, f(\xi_1) \,  f(\xi_2)\, d\xi_1 \, d\xi_2.\]
 It is now easy to guarantee that this error does not change the top order behavior, see \Cref{rk:positivity} for details.
 
{\bf Step 2.} Similar techniques show that the term nonlinear term $|u|^2 u$ produces a negligible contribution. Let us now consider the nonlinear term $u^2 \, \pa_x \bar{u}$. 
A similar procedure as the one developed in \eqref{eq:firstiteration3} for $|u|^2\, \pa_x u$ shows that the contribution of $u^2 \, \overline{\pa_x u}$ to $\widehat{u_{3,N}}$ is:
\[ -i\,c(N)^3 \, t \, e^{-it\, \Phi(\xi,\mu)}\, H(\xi)\, S(\mu) + \mbox{error}\]
for 
\begin{align*}
H(\xi) & := \int_{\R^2} (-\xi+\xi_2+\xi_1)\, f(-\xi+\xi_2+\xi_1)\, f(\xi_1) \,  f(\xi_2) \, d\xi_1 \, d\xi_2,\\
S(\mu) & := \int_{\R^2} g(-\mu+\mu_2+\mu_1)\, g(\mu_1)\, g(\mu_2)\, d\mu_1\, d\mu_2.
\end{align*}
The error term admits a similar analysis to that of Step 1.
The key point now is that $H(\xi)=-F(\xi)$ and $S(\mu)=G(\mu)$ thanks to the fact that $f$ is odd and $g$ is even. This guarantees that this term and the top order term from Step 1 do not cancel. In fact the two terms have the same sign and we will have a top order of:
\[ -i\,\frac{7}{4} \, c(N)^3 \, t \, e^{-it\, \Phi(\xi,\mu)}\, F(\xi)\, G(\mu).\]

{\bf Step 3.} The same ideas can be used to approximate the contribution of $u\, \pa_x^2 |\nabla|^{-1} (|u|^2)$ leading to a top order term given by
\[ \frac{i}{2} \, c(N)^{3}\, t\,  e^{-it\Phi(\xi,\mu)}\, F^{\ast}(\xi,\mu),\]
 where 
\begin{multline*}
F^{\ast}(\xi,\mu) = \int_{\R^4} \frac{(\xi-\xi_1)^2}{[(\xi-\xi_1)^2+(\mu-\mu_1)^2]^{1/2}} \,f(\xi-\xi_1+\xi_2)\, f(\xi_1) \,  f(\xi_2)\\ g(\mu-\mu_1+\mu_2) \, g(\mu_1) \, g(\mu_2)\, d\mu_1 \, d\mu_2\, d\xi_1 \, d\xi_2 .\\
\end{multline*}
Note however that this term cannot cancel the leading term given that
\[ |F^{\ast}(\xi,\mu)|\leq G(\mu)\, \int_{\R^2} |\xi-\xi_1|\, |f(\xi-\xi_1+\xi_2)|\, |f(\xi_1)| \, |f(\xi_2)|\, d\xi_1, d\xi_2 ,\]
and that the last integral has size $N_1^{6\varepsilon}$. This is therefore a lower order term (see \Cref{thm:approx3} below).
\end{proof}

\

Now we study the size of $\widetilde{\Omega}$ as defined in  \eqref{eq:Omega} (the same applies to that of $\widetilde{\omega}$). An application of the mean value theorem yields the following

\begin{lem}\label{thm:approx2} Suppose that $(\xi_k,\mu_k)\in B(N_1, N_1^{2\varepsilon}) \times B(N_2, N_2^{2\varepsilon})$ for $k=1,2$. Then for $(\xi,\mu)\in B(N_1, N_1^{2\varepsilon}) \times B(N_2, N_2^{2\varepsilon})$ we have that 
\[ |\widetilde{\Omega}|\lesssim (N_1^2 + N_2^2) \, N_1^{2\varepsilon} + N_1 \, N_2^{1+ 2\varepsilon}.\]
\end{lem}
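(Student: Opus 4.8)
The plan is to estimate $\widetilde{\Omega}$ directly from its definition \eqref{eq:Omega} by writing it as a sum of differences of $\Phi$ evaluated at nearby points and controlling each difference by the mean value theorem, carefully tracking the size of $\nabla\Phi$ on the relevant frequency box. First I would set $a := \xi - \xi_1 + \xi_2$ and $b := \mu - \mu_1 + \mu_2$, and observe that under the hypotheses all of $\xi, \xi_1, \xi_2, a$ lie in an interval of length $O(N_1^{2\varepsilon})$ centered at $N_1$ (and similarly for the $\mu$-variables near $N_2$); in particular, since $\varepsilon < 1/2$, for large $N$ we have $a \sim N_1$ and $b \sim N_2$ up to lower-order corrections.

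The core of the argument is the algebraic identity
\[
\widetilde{\Omega} = \big[\Phi(a,b) - \Phi(\xi,\mu)\big] + \big[\Phi(\xi_1,\mu_1) - \Phi(\xi_2,\mu_2)\big],
\]
which I would rewrite by inserting the intermediate point $(\xi_2,\mu_1)$ (or whichever pairing makes the telescoping cleanest): the point is that $a - \xi = \xi_2 - \xi_1$ and $b - \mu = \mu_2 - \mu_1$, so the first bracket and (a regrouping of) the second bracket involve the \emph{same} frequency increments $\xi_2 - \xi_1 = O(N_1^{2\varepsilon})$ and $\mu_2 - \mu_1 = O(N_2^{2\varepsilon})$. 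Then for each difference $\Phi(p') - \Phi(p)$ with $p, p'$ in the box, the mean value theorem gives $|\Phi(p') - \Phi(p)| \le |p' - p| \sup |\nabla\Phi|$ along the segment. From \eqref{eq:phase} one computes $\pa_\xi \Phi = \tfrac{3}{16}\xi^2 - \tfrac38\mu^2 - \tfrac18 \xi + \tfrac12 = O(N_1^2 + N_2^2)$ and $\pa_\mu \Phi = -\tfrac34 \xi\mu - \tfrac12 \mu = O(N_1 N_2 + N_2)$ on the box. Multiplying the $\xi$-increment $O(N_1^{2\varepsilon})$ by $\sup|\pa_\xi\Phi| = O(N_1^2 + N_2^2)$ yields the term $(N_1^2 + N_2^2) N_1^{2\varepsilon}$, and multiplying the $\mu$-increment $O(N_2^{2\varepsilon})$ by $\sup|\pa_\mu\Phi| = O(N_1 N_2)$ yields $N_1 N_2^{1 + 2\varepsilon}$ (absorbing the smaller $N_2^{1+2\varepsilon}$ term). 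Summing the two brackets gives the claimed bound $|\widetilde{\Omega}| \lesssim (N_1^2 + N_2^2) N_1^{2\varepsilon} + N_1 N_2^{1+2\varepsilon}$.

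The only mild subtlety — and the step I would be most careful about — is the bookkeeping of which increments pair with which derivative bound: a naive triangle-inequality split $\widetilde{\Omega} = [\Phi(a,b) - \Phi(\xi,\mu)] + [\Phi(\xi_1,\mu_1) - \Phi(\xi_2,\mu_2)]$ would estimate the second bracket using increments $\xi_1 - \xi_2$ and $\mu_1 - \mu_2$, which are again $O(N_1^{2\varepsilon})$ and $O(N_2^{2\varepsilon})$, so in fact even the naive split works and no clever regrouping is strictly needed; one just needs to note that $\Phi(a,b) - \Phi(\xi,\mu)$ is controlled by $|a - \xi| \sup|\pa_\xi\Phi| + |b - \mu|\sup|\pa_\mu\Phi|$ with $|a-\xi| = |\xi_2 - \xi_1| \lesssim N_1^{2\varepsilon}$ and $|b - \mu| = |\mu_2 - \mu_1| \lesssim N_2^{2\varepsilon}$. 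The same estimate applied verbatim to $\widetilde{\omega}$ (the analogous phase for the $u^2\pa_x\bar u$ term) gives the parenthetical remark. This is genuinely routine once the box sizes and derivative bounds are in place; there is no real obstacle, only the need to confirm that the cubic term $\tfrac{1}{16}\xi^3$ contributes $\pa_\xi$-size $O(N_1^2)$ rather than $O(N_1^3)$, which is exactly why the final bound carries $N_1^{2\varepsilon}$ and not $N_1^{1+2\varepsilon}$ on the leading piece.
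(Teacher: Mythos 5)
Your proof is correct and is precisely the argument the paper intends: the paper simply asserts that the lemma follows from the mean value theorem, and your split of $\widetilde{\Omega}$ into the two brackets with increments $\xi_2-\xi_1=O(N_1^{2\varepsilon})$, $\mu_2-\mu_1=O(N_2^{2\varepsilon})$ and the gradient bounds $\pa_\xi\Phi=O(N_1^2+N_2^2)$, $\pa_\mu\Phi=O(N_1N_2)$ on the frequency box is exactly that computation made explicit. (Only a harmless typo: $\pa_\xi\Phi=\tfrac{3}{16}\xi^2-\tfrac38\mu^2-\tfrac14\xi+\tfrac12$, with $-\tfrac14\xi$ rather than $-\tfrac18\xi$, which does not affect the bound.)
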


\Cref{thm:approx1} tells us that we will have a valid approximation whenever $|t\widetilde{\Omega}|<1/4$, or more precisely when
\begin{equation}\label{eq:timecond}
 |t|\lesssim \left(\max\{ N_1^{2+2\varepsilon},\ N_2^2 \, N_1^{2\varepsilon},\ N_1 \, N_2^{1+2\varepsilon}\} \right)^{-1}.
\end{equation}

\begin{rk}\label{rk:positivity} We also need to make sure that the top order in \Cref{thm:approx1} controls the error. That can be achieved by restricting $\xi$ to a region where $F(\xi)=\widetilde{F}(\xi)$, so that the error term has at most $\frac{1}{3}$ of the size of the top order. Since $f$ is positive, we only need to make sure that the sign of $\xi-\xi_1+\xi_2$ is positive. However, note that $-\xi_1+\xi_2 \in B(0,2\, N_1^{2\varepsilon})$, and in \Cref{thm:approx2} we already require $\xi \in B(N_1, N_1^{2\varepsilon})$, so $\xi-\xi_1+\xi_2$ must be positive for $N_1$ large enough (and we will take it to infinity).
\end{rk}

We can finally estimate the size of $F$ and $G$. We omit the proof since it simply consists on estimating the contribution of the integrands on their respective supports.

\begin{lem}\label{thm:approx3} Suppose that $(\xi,\mu)\in B(N_1, \frac{1}{4} N_1^{2\varepsilon}) \times B(N_2, \frac{1}{4} N_2^{2\varepsilon})$, then 
\begin{equation*}
F(\xi) \geq \frac{1}{2\cdot 8^2}\, N_1^{1+4\varepsilon},\qquad G(\mu)  \geq \frac{1}{2\cdot 8^2}\, N_2^{4\varepsilon}.
\end{equation*}
\end{lem}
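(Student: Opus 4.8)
The plan is to treat $G$ first, which is immediate, and then $F$, which is subtler because $f$ is odd and hence sign-changing, so the integrand defining $F$ is not pointwise nonnegative. \emph{For $G$:} since $g\ge 0$ everywhere, $G(\mu)\ge 0$ and it suffices to bound below the contribution of the subregion where $\mu_1,\mu_2\in[N_2-\frac1{16}N_2^{2\varepsilon},N_2+\frac1{16}N_2^{2\varepsilon}]$, on which $g(\mu_1)=g(\mu_2)=1$ and on which $\mu-\mu_1+\mu_2\in B(N_2,\frac38 N_2^{2\varepsilon})\subset[N_2-\frac12 N_2^{2\varepsilon},N_2+\frac12 N_2^{2\varepsilon}]$, so that $g(\mu-\mu_1+\mu_2)=1$ as well. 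The integrand equals $1$ there and this set has measure $(\frac18 N_2^{2\varepsilon})^2=\frac1{64}N_2^{4\varepsilon}$, which already gives $G(\mu)\ge\frac1{64}N_2^{4\varepsilon}\ge\frac1{2\cdot 8^2}N_2^{4\varepsilon}$.

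\emph{For $F$:} I would recast $F$ as a one-dimensional triple convolution and isolate its top order. Setting $h(\eta):=\eta f(\eta)$, successive changes of variables in the $\xi_2$- and $\xi_1$-integrals, using $f(-\cdot)=-f(\cdot)$, give $F(\xi)=-(h*f*f)(\xi)$. On $\mbox{supp}(f)$ the sign of $\eta$ agrees with that of $f(\eta)$ (taking $0\le f\le 1$ on $[N_1-N_1^{2\varepsilon},N_1+N_1^{2\varepsilon}]$), hence $h(\eta)=|\eta|\,|f(\eta)|\ge 0$; moreover $|\eta|$ differs from $N_1$ by at most $N_1^{2\varepsilon}$ on the support, so $h(\eta)=N_1|f(\eta)|+r(\eta)$ with $|r(\eta)|\le N_1^{2\varepsilon}|f(\eta)|$. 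Write $f=f_+-f_-$, where $f_+$ and $f_-$ are the positive- and negative-frequency bumps (so $|f|=f_++f_-$, supported near $+N_1$ and $-N_1$ respectively). Expanding $|f|*f*f$ and $|f|*|f|*|f|$ into triple convolutions of the $f_\pm$, each supported within $\O(N_1^{2\varepsilon})$ of one of $\pm N_1,\pm 3N_1$, one checks that for $\xi\in B(N_1,\frac14 N_1^{2\varepsilon})$ and $N$ large only one term survives in each expansion, namely
\[ -N_1\,(|f|*f*f)(\xi)=N_1\,(f_+*f_+*f_-)(\xi), \qquad |(r*f*f)(\xi)|\le(|r|*|f|*|f|)(\xi)\le 3N_1^{2\varepsilon}(f_+*f_+*f_-)(\xi). \]
Therefore $F(\xi)=-N_1(|f|*f*f)(\xi)-(r*f*f)(\xi)\ge(N_1-3N_1^{2\varepsilon})(f_+*f_+*f_-)(\xi)\ge\frac12 N_1\,(f_+*f_+*f_-)(\xi)$ for $N$ large.

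It then remains to bound $(f_+*f_+*f_-)(\xi)$ below for $\xi\in B(N_1,\frac14 N_1^{2\varepsilon})$: restricting the two $f_+$-arguments to $[N_1-\frac1{16}N_1^{2\varepsilon},N_1+\frac1{16}N_1^{2\varepsilon}]$ (where $f_+=1$) forces the third argument into $[-N_1-\frac38 N_1^{2\varepsilon},-N_1+\frac38 N_1^{2\varepsilon}]\subset[-N_1-\frac12 N_1^{2\varepsilon},-N_1+\frac12 N_1^{2\varepsilon}]$, where $f_-=1$; the integrand is $1$ on a set of measure $(\frac18 N_1^{2\varepsilon})^2=\frac1{64}N_1^{4\varepsilon}$, so $(f_+*f_+*f_-)(\xi)\ge\frac1{64}N_1^{4\varepsilon}$ and hence $F(\xi)\ge\frac1{128}N_1^{1+4\varepsilon}=\frac1{2\cdot 8^2}N_1^{1+4\varepsilon}$.

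The main obstacle is exactly this cancellation in $F$ forced by the oddness of $f$: a naive region-by-region estimate of the defining double integral fails, because the regions with $(\xi_1,\xi_2)$ near $(+N_1,+N_1)$ and near $(+N_1,-N_1)$ contribute comparable amounts of opposite sign. The convolution identity $F=-(h*f*f)$, together with the support bookkeeping that extracts the single surviving piece $\sim N_1\,f_+*f_+*f_-$ near frequency $+N_1$, is what converts the signed integral into a manifestly positive quantity; it is precisely this surviving piece that the heuristic in \Cref{rk:positivity} ("one only needs $\xi-\xi_1+\xi_2>0$") implicitly refers to.
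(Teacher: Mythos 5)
Your proof is correct, and for $G$ it coincides with the paper's (omitted) argument: the intended proof of the whole lemma is just the direct support estimate, i.e.\ restrict $\xi_1,\xi_2$ (resp.\ $\mu_1,\mu_2$) to the sub-intervals of length $\frac18 N_1^{2\varepsilon}$ (resp.\ $\frac18 N_2^{2\varepsilon}$) where the cutoff equals $1$, observe that the shifted argument then lies where the cutoff is also $1$, and use $\xi-\xi_1+\xi_2\ge \frac12 N_1$; this is exactly where the constant $\frac{1}{2\cdot 8^2}$ comes from. Where you genuinely diverge is $F$: you take the stated oddness of $f$ literally, so $f$ is a two-bump, sign-changing profile and the integral defining $F$ is signed; your identity $F=-(h*f*f)$ with $h(\eta)=\eta f(\eta)\ge 0$, the splitting $f=f_+-f_-$, and the support bookkeeping isolating the net piece $N_1\,(f_+*f_+*f_-)$ are all correct (note that near $+N_1$ two terms survive, $-2f_+*f_+*f_-$ and $+f_-*f_+*f_+$, which combine to $-f_+*f_+*f_-$, so your phrase ``only one term survives'' is loose even though the displayed identity and its sign are right), and you recover the paper's constant for $N_1$ large, using $3N_1^{2\varepsilon}\le\frac12 N_1$. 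The paper, by contrast, operates throughout Section 6 under the reading that $f\ge 0$ is supported only in $[N_1-N_1^{2\varepsilon},N_1+N_1^{2\varepsilon}]$ --- see the hypotheses of \Cref{thm:approx2} and \Cref{rk:positivity}, which says ``$f$ is positive'' and $-\xi_1+\xi_2\in B(0,2N_1^{2\varepsilon})$ --- so its integrand is pointwise nonnegative on the relevant set and the one-line estimate suffices; the word ``odd'' in the construction is inconsistent with that support statement, and your argument resolves the harder of the two readings. What your route buys is robustness: it shows the lower bound survives the cancellation forced by genuine oddness (a net single bump's worth of positive mass near $+N_1$), at the cost of extra bookkeeping and a harmless largeness assumption on $N_1$.
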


Finally, we can put these results together:
\begin{prop}\label{thm:approx4} Suppose that $(\xi,\mu)\in B(N_1, \frac{1}{4} N_1^{2\varepsilon}) \times B(N_2, \frac{1}{4} N_2^{2\varepsilon})$ and that $t$ satisfies condition \eqref{eq:timecond}. Then
\[ |\widehat{u}_{3,N}(t,\xi,\mu)| \gtrsim  t\, c(N)^{3}\, N_1^{1+4\varepsilon}\, N_2^{4\varepsilon},\]
where the implicit constant is independent of $t,N,\varepsilon$.
\end{prop}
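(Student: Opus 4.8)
The plan is to assemble the approximations obtained in Lemmas~\ref{thm:approx1}, \ref{thm:approx2} and~\ref{thm:approx3}, together with the positivity observation in Remark~\ref{rk:positivity}.

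First I would check that the hypothesis $|t\widetilde{\Omega}|<1/4$ of Lemma~\ref{thm:approx1} holds uniformly over the relevant domain of integration. The integrand in \eqref{eq:firstiteration3} (and in the analogous expressions for the other three nonlinear terms) is supported where $(\xi_k,\mu_k)\in B(N_1,N_1^{2\varepsilon})\times B(N_2,N_2^{2\varepsilon})$ for $k=1,2$, and by hypothesis $(\xi,\mu)\in B(N_1,\frac14 N_1^{2\varepsilon})\times B(N_2,\frac14 N_2^{2\varepsilon})$, so Lemma~\ref{thm:approx2} gives
\[ |\widetilde{\Omega}|\lesssim (N_1^2+N_2^2)\,N_1^{2\varepsilon}+N_1\,N_2^{1+2\varepsilon}\lesssim \max\{N_1^{2+2\varepsilon},\,N_2^2 N_1^{2\varepsilon},\,N_1 N_2^{1+2\varepsilon}\}. \]
Combined with the time restriction \eqref{eq:timecond} (whose implicit constant we take small enough), this yields $|t\widetilde{\Omega}|<1/4$ throughout.

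With this in place, Lemma~\ref{thm:approx1} applies and gives
\[ \widehat{u}_{3,N}(t,\xi,\mu) = -\,i\,\frac{7}{4}\,c(N)^3\,t\,e^{-it\Phi(\xi,\mu)}\,F(\xi)\,G(\mu) + \mbox{error}. \]
On the region under consideration one has $F(\xi)=\widetilde F(\xi)$, because $\xi-\xi_1+\xi_2>0$ on the support of the integrand once $N_1$ is large (using $2\varepsilon<1$), so by Remark~\ref{rk:positivity} the error is at most $\frac13$ of the modulus of the leading term. Since $|e^{-it\Phi(\xi,\mu)}|=1$, the triangle inequality then gives
\[ |\widehat{u}_{3,N}(t,\xi,\mu)| \geq \Big(\frac{7}{4}-\frac{1}{3}\cdot\frac{7}{4}\Big)\,c(N)^3\,t\,F(\xi)\,G(\mu)\gtrsim c(N)^3\,t\,F(\xi)\,G(\mu), \]
and inserting the lower bounds $F(\xi)\geq \frac{1}{2\cdot 8^2}N_1^{1+4\varepsilon}$, $G(\mu)\geq \frac{1}{2\cdot 8^2}N_2^{4\varepsilon}$ from Lemma~\ref{thm:approx3} produces $|\widehat{u}_{3,N}(t,\xi,\mu)|\gtrsim t\,c(N)^3\,N_1^{1+4\varepsilon}\,N_2^{4\varepsilon}$, with an implicit constant depending only on the numerical constants appearing in Lemmas~\ref{thm:approx1} and~\ref{thm:approx3}; in particular it is independent of $t,N,\varepsilon$.

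The only genuinely delicate point — already settled in the lemmas above — is that the leading contributions of the quadratic-derivative terms $|u|^2\pa_x u$ and $u^2\pa_x\overline u$ add rather than cancel, which is exactly the identity $H=-F$, $S=G$ (oddness of $f$, evenness of $g$) in Step~2 of the proof of Lemma~\ref{thm:approx1}, responsible for the coefficient $\frac74$; the cubic term $|u|^2u$ carries no $\pa_x$ and is of lower order, while $u\,\pa_x^2|\nabla|^{-1}(|u|^2)$ is of lower order by the estimate $|F^*(\xi,\mu)|\lesssim N_1^{6\varepsilon}\,G(\mu)\ll N_1^{1+4\varepsilon}\,G(\mu)$ from Step~3. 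Granting these sign and size comparisons, the proposition follows from the elementary triangle-inequality argument above.
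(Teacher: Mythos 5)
Your argument is correct and follows essentially the same route as the paper: verify $|t\widetilde{\Omega}|<1/4$ from \Cref{thm:approx2} and \eqref{eq:timecond}, apply \Cref{thm:approx1} with the error controlled by $\tfrac13$ of the leading term via \Cref{rk:positivity}, and conclude with the lower bounds of \Cref{thm:approx3}. Your write-up simply spells out the details (uniformity of the phase bound over the integration domain, the triangle inequality, and the non-cancellation of the $|u|^2\pa_x u$ and $u^2\pa_x\overline u$ contributions) that the paper's short proof leaves implicit.
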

\begin{proof}
As long as $t$ satisfies \eqref{eq:timecond}, we use \Cref{thm:approx1}, and note that the error term is at most $\frac{1}{3}$ the size of the leading order. Then we combine this with the results of \Cref{thm:approx3}.
\end{proof}

We are ready to give the main argument: consider $\mathcal{R}_N= B(N_1, \frac{1}{4} N_1^{2\varepsilon}) \times B(N_2, \frac{1}{4} N_2^{2\varepsilon})$ for $N=(N_1,N_2)$. Suppose that $N_2\ll N_1$ so that the normalization constant is 
\[ c(N)\sim N_1^{-s-\varepsilon} \,  N_2^{-\varepsilon}.\]
Then in order for the initial-data-to-solution map to be $C^3$ in $[0,T]$, we need to have:
\[ 1=\norm{u_{0,N}}_{H^s_{x,y}} \gtrsim \norm{u_{3,N}}_{L^{\infty}([0,T], H^s_{x,y})}\gtrsim \norm{\langle\xi\rangle^s \widehat{u}_{3,N}}_{L^{\infty}([0,T], L^2(\mathcal{R}_N))}. \]
Note that there should exist a common time of existence $T$ for all $N$ as the initial data has size 1.

We can further substitute $T$ by $t$, and take this as close as we want to the upper bound given by \eqref{eq:timecond} (note that such $t$ are smaller than any fixed $T$ for $N$ large enough, and thus valid). Then by \Cref{thm:approx4}, we have that 
\begin{equation*}
 1  \gtrsim \norm{\langle\xi\rangle^s \widehat{u}_{3,N}}_{L^{\infty}([0,T], L^2(\mathcal{R}_N))} \gtrsim t\, c(N)^{3}\, N_1^{1+4\varepsilon}\, N_2^{4\varepsilon} \, N_1^s\, |\mathcal{R}_N|^{1/2} \gtrsim N_1^{-1-2s}\, N_2^{2\varepsilon}.
 \end{equation*}
We set $N_2 = 10^{-5} N_1$ and take $N_1\rightarrow \infty$. A contradiction would be reached unless
\[ -1-2s+2\varepsilon<0\quad \Leftrightarrow\quad s>-\frac{1}{2} + \varepsilon.\]
By taking $\varepsilon:= \frac{1}{2}-$ we obtain the condition $s\geq 0$. 
	
	\appendix

	\section{Technical results}
	
	We use this appendix to write some technical results used throughout the paper. We start with the Van der Corput lemma, as seen in \cite{KPV2}.
	
	\begin{lem}[Van der Corput]\label{thm:VanderCorput} Suppose that $\phi$ is a real-valued $C^2$ function defined in $[a,b]$ such that $|\phi''(x)|>c$ in $[a,b]$. Then
		\[ \Big | \int_a^b e^{it\phi(x)}\, \psi (x)\, dx \Big | \leq 10\, |t\,c|^{-1/2}\, \left( |\psi (b)| + \int_a^b |\psi'(x)|\, dx\right).\]
	\end{lem}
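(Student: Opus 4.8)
The plan is to prove this by the classical two-step argument: first reduce to the case $\psi\equiv 1$ by an integration by parts — which also explains why the right-hand side has the shape $|\psi(b)|+\int_a^b|\psi'|$ — and then establish the uniform sublevel estimate $\bigl|\int_a^s e^{it\phi(x)}\,dx\bigr|\lesssim |tc|^{-1/2}$ directly. \textbf{Step 1 (reduction to $\psi\equiv1$).} Set $F(s):=\int_a^s e^{it\phi(x)}\,dx$, so that $F'=e^{it\phi}$ and $F(a)=0$. Integrating by parts,
\[
\int_a^b e^{it\phi}\,\psi\,dx = F(b)\,\psi(b)-\int_a^b F(s)\,\psi'(s)\,ds,
\]
hence $\bigl|\int_a^b e^{it\phi}\psi\,dx\bigr|\le\bigl(\sup_{a\le s\le b}|F(s)|\bigr)\bigl(|\psi(b)|+\int_a^b|\psi'(x)|\,dx\bigr)$. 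Since every subinterval $[a,s]$ still satisfies $|\phi''|>c$, it is enough to prove the sublevel bound $\bigl|\int_a^b e^{it\phi}\,dx\bigr|\le 10\,|tc|^{-1/2}$ with a constant uniform in the interval, and then apply it on $[a,s]$ for each $s$.

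\textbf{Step 2 (sublevel estimate).} Replacing $(\phi,t)$ by $(-\phi,-t)$ if necessary, which leaves the modulus unchanged, we may assume $\phi''\ge c>0$, so $\phi'$ is strictly increasing on $[a,b]$. Let $x_*\in[a,b]$ be the point where $|\phi'|$ attains its minimum — the unique zero of $\phi'$ if it lies in $[a,b]$, and an endpoint otherwise — and fix $\delta>0$. Decompose $[a,b]$ into $J_0:=[x_*-\delta,x_*+\delta]\cap[a,b]$ and the at most two complementary intervals $J_\pm$. On $J_0$ we use the trivial bound $|J_0|\le 2\delta$. On each $J_\pm$, monotonicity of $\phi'$ together with $\phi''\ge c$ forces $|\phi'|\ge c\delta$ throughout, and $1/\phi'$ is monotone there; writing $e^{it\phi}=(it\phi')^{-1}\tfrac{d}{dx}e^{it\phi}$, integrating by parts, and using the monotonicity of $1/\phi'$ to bound the resulting variation, one gets the first-derivative van der Corput bound $\bigl|\int_{J_\pm}e^{it\phi}\,dx\bigr|\le 3\,(tc\delta)^{-1}$. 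Summing the three pieces yields $\bigl|\int_a^b e^{it\phi}\,dx\bigr|\le 2\delta+6\,(tc\delta)^{-1}$, and choosing $\delta=\sqrt{3/(tc)}$ gives $4\sqrt{3}\,|tc|^{-1/2}<10\,|tc|^{-1/2}$, as desired. (In the degenerate regime $b-a\le 2\delta$ the trivial bound already suffices, which is why allowing $J_\pm$ to be empty is harmless.)

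The argument is essentially routine, and the only point requiring genuine care is bookkeeping: one must make sure the constant $10$ is truly uniform in $a,b,t,c$ and in the auxiliary $\delta$, so I would state the decomposition of $[a,b]$ so that $J_0$ and $J_\pm$ are well defined (possibly empty) in every configuration — $x_*$ interior or at an endpoint, $\phi'$ vanishing on $[a,b]$ or not — and double-check the constant in the first-derivative estimate so that the optimized constant stays below the claimed $10$. The remaining ingredients (the integration by parts in Step 1, the identity $F'=e^{it\phi}$, and the fact that $\phi\in C^2$ with $\phi'\ne0$ on $J_\pm$ legitimizes the integration by parts there) are standard and need no further comment.
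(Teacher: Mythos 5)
Your argument is correct. Note that the paper does not prove this lemma at all: it is stated in the appendix as a standard result cited from \cite{KPV2}, so there is no ``paper proof'' to compare against; what you have written is the classical Van der Corput argument (reduction to $\psi\equiv 1$ via $F(s)=\int_a^s e^{it\phi}$, which indeed produces exactly the factor $|\psi(b)|+\int_a^b|\psi'|$ appearing in the statement, followed by the split around the minimum of $|\phi'|$, the first-derivative estimate on the complementary intervals, and optimization in $\delta$), and your bookkeeping checks out: the first-derivative bound $3\,(|t|c\delta)^{-1}$ is valid (in fact $2\,(|t|c\delta)^{-1}$ suffices), the sublevel constant $4\sqrt{3}$ is below $10$, and the hypothesis $|\phi''|>c$ passes to every subinterval $[a,s]$, so the constant is uniform as required in Step 1. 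Two cosmetic remarks: since $|\phi''|>c>0$ and $\phi''$ is continuous, $\phi''$ has a fixed sign, which is what legitimizes your replacement of $(\phi,t)$ by $(-\phi,-t)$; and to literally assume $tc>0$ in Step 2 you should also allow taking complex conjugates (replacing $t$ by $-t$ alone), writing $\delta=\sqrt{3/(|t|c)}$ -- the modulus is unchanged, so nothing is affected.
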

		Note that the right-hand side is controlled by $|tc|^{-1/2} \left( \norm{\psi}_{L^{\infty}} + \norm{\psi'}_{L^1}\right)$. Moreover, if $\psi$ has a finite number of changes of monotinicity, the right-hand side is entirely controlled by $|tc|^{-1/2} \norm{\psi}_{L^{\infty}}$.
	
	We also have the following version of the inverse function theorem. In particular, we will make use of \eqref{eq:invfuncond} to estimate the size of the neighborhood, which can be proved with a simple fixed point argument.
	
		\begin{thm}[Inverse function theorem]\label{thm:inversefunction}
		Suppose that $f: U\rightarrow \R$ is a $C^1$ function such that $f'(x_0)\neq 0$. Then there exists some $\varepsilon>0$ such that
		\[ f:  B(x_0,\varepsilon) \rightarrow B \left(f(x_0),\frac{f'(x_0)}{2}\,\varepsilon \right) \]
		is a $C^1$ diffeomorphism. Moreover, a (non-optimal) $\varepsilon$ can be found by imposing
		\begin{equation}
		\label{eq:invfuncond}
			\Big |\frac{f'(x)}{f'(x_0)} \Big | \geq \frac{1}{2}\quad  \mbox{for all}\ x\in B(x_0,\varepsilon).
		\end{equation}
	\end{thm}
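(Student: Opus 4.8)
The plan is to reduce the statement to the elementary one-dimensional monotonicity argument, with the quantitative input coming entirely from \eqref{eq:invfuncond}. First I would assume without loss of generality that $f'(x_0)>0$ (otherwise replace $f$ by $-f$, which changes nothing beyond the sign of the radius). Since $f\in C^1$, the derivative $f'$ is continuous at $x_0$, so there is an $\varepsilon>0$ with $\tfrac12 f'(x_0)\le f'(x)\le \tfrac32 f'(x_0)$ for every $x\in B(x_0,\varepsilon)$; in particular \eqref{eq:invfuncond} holds, and this is precisely the mechanism by which one produces an admissible $\varepsilon$ in the applications (e.g.\ in \Cref{thm:zeroes}).

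Next I would use that $f'>0$ on $B(x_0,\varepsilon)$, so $f$ is strictly increasing there and hence injective. The fundamental theorem of calculus gives, for $x_0<x<x_0+\varepsilon$,
\[ f(x)-f(x_0)=\int_{x_0}^{x} f'(t)\,dt\ \ge\ \tfrac12 f'(x_0)\,(x-x_0),\]
and letting $x\to x_0+\varepsilon$ shows $\sup_{B(x_0,\varepsilon)} f\ge f(x_0)+\tfrac12 f'(x_0)\varepsilon$; symmetrically $\inf_{B(x_0,\varepsilon)} f\le f(x_0)-\tfrac12 f'(x_0)\varepsilon$. Since $f$ is continuous and $B(x_0,\varepsilon)$ is connected, $f(B(x_0,\varepsilon))$ is an interval, and by the two displayed bounds it contains $B\big(f(x_0),\tfrac{f'(x_0)}{2}\varepsilon\big)$. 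Restricting the domain to $V:=B(x_0,\varepsilon)\cap f^{-1}\big(B(f(x_0),\tfrac{f'(x_0)}{2}\varepsilon)\big)$, an open subinterval containing $x_0$, yields a continuous bijection $f\colon V\to B\big(f(x_0),\tfrac{f'(x_0)}{2}\varepsilon\big)$. Finally I would check that $g:=f^{-1}$ is $C^1$: monotonicity makes $g$ continuous, and since $f'\neq 0$ the usual difference-quotient computation gives $g'(y)=1/f'(g(y))$, which is continuous as a composition of continuous maps; hence $f$ is a $C^1$ diffeomorphism onto the stated ball.

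The only genuinely delicate point is that $f$ need not map the \emph{entire} ball $B(x_0,\varepsilon)$ onto exactly $B\big(f(x_0),\tfrac{f'(x_0)}{2}\varepsilon\big)$ — its image may be strictly larger — so one must either restrict to the preimage $V$ as above or, equivalently, shrink $\varepsilon$ slightly; this is harmless for the applications, where only a valid $\varepsilon$ satisfying \eqref{eq:invfuncond} is needed. An equivalent and more robust route is the contraction-mapping argument alluded to in the text: solving $f(x)=y$ amounts to finding a fixed point of $T_y(x)=x-f'(x_0)^{-1}\big(f(x)-y\big)$, and on $B(x_0,\varepsilon)$ one has $|T_y'(x)|=\big|1-f'(x)/f'(x_0)\big|\le \tfrac12$, so $T_y$ is a $\tfrac12$-contraction of the closed ball $\overline{B(x_0,\varepsilon)}$ into itself whenever $|y-f(x_0)|\le \tfrac12 f'(x_0)\varepsilon$; Banach's fixed point theorem then produces the inverse, whose $C^1$ regularity follows from the standard perturbative estimate. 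Either way, \eqref{eq:invfuncond} is exactly the inequality that makes the quantitative radius $\tfrac{f'(x_0)}{2}\varepsilon$ work, which is all that is used in the body of the paper.
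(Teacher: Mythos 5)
Your proof is correct, and it is more detailed than what the paper provides: the paper does not prove this theorem at all, it merely states it in the appendix with the remark that it ``can be proved with a simple fixed point argument.'' Your primary route is genuinely different and arguably better suited to the statement as written: the monotonicity argument (sign constancy of $f'$ from \eqref{eq:invfuncond} plus continuity, strict monotonicity, the fundamental theorem of calculus to show the image contains $B\bigl(f(x_0),\tfrac{|f'(x_0)|}{2}\varepsilon\bigr)$, and the standard difference-quotient computation for $C^1$ regularity of the inverse) uses only the one-sided bound \eqref{eq:invfuncond}, which is exactly the hypothesis the paper imposes and the form in which the lemma is invoked in \Cref{thm:zeroes} via \eqref{eq:chooseepsilon}. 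You also correctly flag the small imprecision in the statement --- $f(B(x_0,\varepsilon))$ may be strictly larger than the target ball, so one restricts to the preimage --- which is harmless for the applications. One caveat on your alternative route: the contraction bound $|T_y'(x)|=|1-f'(x)/f'(x_0)|\le\tfrac12$ requires the two-sided bound $\tfrac12\le f'(x)/f'(x_0)\le\tfrac32$, which you arranged by continuity in your first paragraph but which is not literally implied by \eqref{eq:invfuncond} alone; so if one wants the theorem exactly as stated (any $\varepsilon$ satisfying \eqref{eq:invfuncond} works), your monotonicity argument is the one that delivers it, while the fixed-point argument the paper alludes to proves the existence of \emph{some} admissible $\varepsilon$, which is all the paper ever uses.
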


	We also record some results that are useful to work with fractional derivatives. The first one is Theorem A.12 in \cite{KPV}. See also \cite{KatoPonce} for similar estimates, as well as Proposition 3.3 in \cite{CW}.
	
	\begin{thm}[Fractional Leibniz rule]\label{thm:Leibnizrule} 
		Let $s\in (0,1)$ and $1<p<\infty$. Then
		\[ \norm{D_x^s (u\, v) - D_x^s u \, v - u\, D_x^s v}_{L^p_x}\lesssim \norm{u}_{L^{\infty}_x}\, \norm{D_x^{s} v}_{L^p_x}.\]
	\end{thm}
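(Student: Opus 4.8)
The plan is to reduce, by a standard density argument, to the case $u,v\in\mathcal S(\R)$, and then to use the pointwise representation of the fractional derivative available in the range $0<s<1$: there is a nonzero constant $c_s$ with
\[ D_x^s f(x) = c_s\,\mathrm{p.v.}\!\int_{\R}\frac{f(x)-f(x-y)}{|y|^{1+s}}\,dy,\qquad f\in\mathcal S(\R). \]
Substituting $f=uv$, $f=u$, and $f=v$ and regrouping the three integrands, the diagonal contributions $u(x)v(x)$ cancel and one is left with the symmetric double-difference identity
\[ D_x^s(uv)(x) - (D_x^s u)(x)\,v(x) - u(x)(D_x^s v)(x) = -\,c_s\!\int_{\R}\frac{\Delta_y u(x)\,\Delta_y v(x)}{|y|^{1+s}}\,dy,\qquad \Delta_y f(x):=f(x)-f(x-y), \]
where the integral is now absolutely convergent (its integrand is $O(|y|^{1-s})$ as $y\to0$ and $O(|y|^{-1-s})$ as $|y|\to\infty$). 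Thus the estimate is reduced to an $L^p_x$ bound for this bilinear singular-integral-type expression in which, crucially, only $\norm{u}_{L^\infty}$ and no derivative of $u$ may appear.

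For the $L^p_x$ bound I would argue pointwise. Fix $x$, let $w:=D_x^s v$, and split the $y$-integral at a radius $\lambda=\lambda(x)>0$. On $|y|>\lambda$ one uses the crude bound $|\Delta_y u(x)|\le 2\norm{u}_{L^\infty}$ together with a dyadic decomposition in $|y|$ and the Hardy--Littlewood maximal function $M$ to get $\int_{|y|>\lambda}|y|^{-1-s}|\Delta_y v(x)|\,dy\lesssim_s \lambda^{-s}(Mv)(x)$. On $|y|<\lambda$ one writes $v=I_s w$, where $I_s$ is the one-dimensional Riesz potential of order $s$ (convolution with a constant multiple of $|x|^{s-1}$), so that $\Delta_y v(x)=\int_{\R}\big(k(x-z)-k(x-y-z)\big)w(z)\,dz$; carefully estimating the averaged kernel $\int_{|y|<\lambda}|y|^{-1-s}\big(k(\cdot)-k(\cdot-y)\big)\,dy$ — using the smoothness of $k$ away from the origin and retaining the $\mathrm{p.v.}$ cancellation near it — one bounds the near-diagonal contribution by $\norm{u}_{L^\infty}$ times a maximal average of $w=D_x^s v$, with a dependence on $\lambda$ that, after optimizing the choice of $\lambda=\lambda(x)$ against the far contribution, produces a pointwise estimate $\big|D_x^s(uv)-(D_x^su)v-uD_x^sv\big|(x)\lesssim \norm{u}_{L^\infty}\,\widetilde M(D_x^sv)(x)$ for a maximal operator $\widetilde M$ bounded on $L^p$, $1<p<\infty$; the claimed inequality then follows.

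The main obstacle is the near-diagonal region, where the factor $|\Delta_y v(x)|$ must be controlled by $D_x^s v$ rather than by $v'$: this is precisely where the Kato--Ponce-type cancellation enters and where the $|y|^{-s}$-weighted modulus of continuity of $v$ has to be converted into a genuine maximal average of its $s$-th derivative via the Riesz-potential representation and kernel estimates that exploit the principal value; carrying this out without any $\varepsilon$-loss in the derivative count on $v$ is the delicate point. A more robust, though machinery-heavy, alternative is to work on the Fourier side: the commutator is the bilinear Fourier multiplier operator with symbol $m(\xi,\eta)=|\xi+\eta|^s-|\xi|^s-|\eta|^s$, one performs a Littlewood--Paley decomposition isolating the high-high to low frequency interactions from the rest, checks that on each piece $m/|\eta|^s$ (resp. $m/|\xi|^s$) obeys Coifman--Meyer-type symbol bounds, and invokes the corresponding bilinear multiplier theorem at the $L^\infty\times L^p\to L^p$ endpoint. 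Either route reproduces the argument of Kato--Ponce and of the appendix of \cite{KPV}.
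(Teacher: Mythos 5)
First, a framing remark: the paper does not prove this statement at all; it is quoted verbatim as Theorem A.12 of Kenig--Ponce--Vega \cite{KPV} (with pointers to \cite{KatoPonce} and Proposition 3.3 of \cite{CW}), so your proposal has to be measured against the literature rather than against an argument in the text. Your first reduction is correct and is indeed the classical starting point: for Schwartz functions and $0<s<1$, the singular-integral representation of $D_x^s$ yields exactly the symmetric double-difference identity you wrote, with an absolutely convergent integral. The gap is in the second half. In the far region $|y|>\lambda$ you discard $\Delta_y u$ via $|\Delta_y u|\le 2\norm{u}_{L^\infty_x}$ and obtain $\lambda^{-s}\,Mv(x)$, a quantity involving $v$ itself and no derivative; optimizing $\lambda(x)$ against a near-region bound expressed through $M(D_x^s v)(x)$ can only produce a geometric mean of $Mv$ and $M(D_x^s v)$, never $M(D_x^s v)$ alone, and such a mean is not controlled by $\norm{D_x^s v}_{L^p_x}$ (it is also inconsistent with the scaling of the inequality, whose right-hand side is homogeneous of exact order $s$ in derivatives of $v$). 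More structurally: once $\Delta_y u$ is thrown away on a fixed portion of the $y$-integral, what remains is $\int |\Delta_y v(x)|\,|y|^{-1-s}dy$, an $\dot F^{s}_{p,1}$-type quantity, and since $\dot F^{s}_{p,1}\subsetneq \dot F^{s}_{p,2}$ its $L^p_x$ norm is strictly stronger than $\norm{D_x^s v}_{L^p_x}$; no maximal-function manipulation of $D_x^s v$ alone can recover it. The known pointwise bounds for this commutator are of the form $\mathcal{D}_{s_1}u(x)\,\mathcal{D}_{s_2}v(x)$ with $s_1+s_2=s$, $s_1,s_2>0$ (Stein-type square-function derivatives), and they degenerate precisely at the endpoint $s_1=0$, $p_1=\infty$ that you need; so the ``delicate point'' you flag is not a technicality but the actual content of the theorem, and your sketch does not close it.

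Your Fourier-side alternative is the standard route (Littlewood--Paley/paraproduct analysis of $m(\xi,\eta)=|\xi+\eta|^s-|\xi|^s-|\eta|^s$), but as written it is also not quite right: dividing by $|\xi|^s$ is not admissible, since the right-hand side allows only $\norm{u}_{L^\infty_x}$ and no derivative of $u$; and $m(\xi,\eta)/|\eta|^s$ does not satisfy Coifman--Meyer symbol bounds uniformly, failing both in the region $|\xi|\ll|\eta|$ (the $|\xi|^s/|\eta|^s$ piece) and in the region $|\xi|\gg|\eta|$. The repair is standard: estimate the low-high, high-low and high-high paraproduct pieces separately, using the commutator cancellation in the high-low piece and vector-valued square functions, or simply quote the endpoint commutator estimates of Kato--Ponce type. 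Given that the paper's appendix only cites the result, invoking \cite{KPV}, \cite{KatoPonce} or \cite{CW} is the appropriate move here; if you do want a self-contained proof, the paraproduct argument, carried out carefully piece by piece, is the one to complete.
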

	
	The following result is Proposition 3.1 in \cite{CW}. 
	
	\begin{thm}[Fractional chain rule]\label{thm:chainrule}
		Suppose that $F\in C^1 (\C)$ and $s\in (0,1)$. Let $p,p_1,p_2 \in (1,\infty)$ such that 
		\[ \frac{1}{p}=\frac{1}{p_1} + \frac{1}{p_2}.\] 
		If $u\in L^{\infty}(\R)$, then
		\[ \norm{D_x^s F(u)}_{L^p_x} \lesssim \norm{F'(u)}_{L^{p_1}_x} \, \norm{D_x^s u}_{L^{p_2}_x}.\]
	\end{thm}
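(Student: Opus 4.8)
The plan is to deduce the inequality from a pointwise comparison of the fractional difference-quotients of $F(u)$ and of $u$, using the square-function (Littlewood--Paley/Strichartz) characterization of the homogeneous Sobolev seminorm. For $0<s<1$ and $1<q<\infty$ one has
\[ \norm{D_x^s g}_{L^q_x} \approx \norm{\,\mathcal{S}_s g\,}_{L^q_x},\qquad \mathcal{S}_s g(x):=\left(\int_0^\infty\left(\frac{1}{2r}\int_{|h|<r}|g(x+h)-g(x)|\,dh\right)^2\frac{dr}{r^{1+2s}}\right)^{1/2}. \]
Applying this with $g=F(u)$ and using the fundamental theorem of calculus,
\[ F(u(x+h))-F(u(x))=\big(u(x+h)-u(x)\big)\int_0^1 F'\!\big(u(x)+t(u(x+h)-u(x))\big)\,dt, \]
so that $|F(u(x+h))-F(u(x))|$ equals $|u(x+h)-u(x)|$ times the average of $|F'|$ over the segment joining $u(x)$ and $u(x+h)$. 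The goal is then the pointwise bound $\mathcal{S}_s(F(u))(x)\lesssim |F'(u(x))|\,\mathcal{S}_s u(x) + (\text{controllable remainder})$, which after taking $L^p_x$ norms and applying H\"older with $\tfrac1p=\tfrac1{p_1}+\tfrac1{p_2}$ yields the claim, the second factor being recovered as $\norm{\mathcal{S}_s u}_{L^{p_2}_x}\approx\norm{D_x^s u}_{L^{p_2}_x}$.

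To produce that pointwise bound I would split the $r$-integral, at each $x$, according to whether $u$ oscillates little or much near $x$ at scale $r$. On scales of small oscillation, continuity of $F'$ on a fixed compact set lets one replace the $t$-average of $|F'|$ by $|F'(u(x))|$ up to an error governed by the modulus of continuity of $F'$, producing exactly $|F'(u(x))|\,\mathcal{S}_s u(x)$ plus a term of the same structure with a harmless power loss. On scales of large oscillation, one trades the smallness of the $r^{-s}$-weighted measure of such increments $h$ against the factor $|u(x+h)-u(x)|$ and absorbs the composition $F'\circ u$ into the Hardy--Littlewood maximal function $M(|F'(u)|)(x)$ (together with $Mu$ and $M(D_x^s u)$ coming from the increment), invoking the vector-valued $\ell^2$-maximal inequality to sum over scales; since $p_1,p_2>1$, $M$ is bounded on $L^{p_1}$ and on $L^{p_2}$, so this remainder is again dominated by $\norm{F'(u)}_{L^{p_1}}\norm{D_x^s u}_{L^{p_2}}$.

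The main obstacle is precisely that $F$ is only assumed $C^1$: there is no quantitative modulus of continuity for $F'$, so one cannot Taylor-expand $F$ to second order, and the split into ``small/large oscillation'' scales must be performed so that all constants depend on $F$ only through $\norm{F'(u)}_{L^{p_1}}$ and not on any finer information about $F$. Carrying this out carefully is the content of Proposition 3.1 in Christ--Weinstein \cite{CW}, whose argument I would follow; an alternative is the paraproduct linearization $F(u)=F(S_0u)+\sum_j m_j\,\Delta_j u$ with $m_j=\int_0^1 F'(S_ju+t\Delta_ju)\,dt$ combined with Coifman--Meyer type multiplier bounds, but the square-function route keeps the dependence on $F'$ most transparent.
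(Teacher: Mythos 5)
The paper itself gives no proof of this statement---it is imported verbatim as Proposition 3.1 of Christ--Weinstein \cite{CW}---and your outline (the Stein--Strichartz square-function characterization of $D_x^s$, the fundamental theorem of calculus for $F(u(x+h))-F(u(x))$, a small/large-oscillation splitting handled with H\"older and Hardy--Littlewood maximal bounds) is exactly the strategy of that cited proof, to which you explicitly defer for the careful execution. So your proposal is correct in outline and takes essentially the same route as the paper, namely invoking \cite{CW}.
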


	\bibliographystyle{amsplain}
	\bibliography{references}
\end{document}